\documentclass[11pt]{article}
\usepackage[letterpaper,margin=1in]{geometry}
\usepackage[square,comma,sort&compress]{natbib}
\usepackage{amsmath,amsthm,mathtools}
\usepackage{fontspec}
\usepackage{unicode-math}
\AtBeginDocument{\renewcommand{\setminus}{\mathbin{\backslash}}}
\usepackage{graphicx,booktabs,tabularx,enumitem,xcolor,placeins}
\usepackage[bookmarksnumbered,bookmarksopen,unicode]{hyperref}
\usepackage{aliascnt}
\usepackage[capitalise,nameinlink,noabbrev]{cleveref}
\newcolumntype{Y}{>{\raggedright\arraybackslash}X}
\definecolor{paperdarkblue}{HTML}{17365D}
\hypersetup{colorlinks=true,allcolors=paperdarkblue}
\newcommand{\R}{\mathbb R}
\newcommand{\E}{\mathbb E}
\newcommand{\Prob}{\mathbb P}
\newcommand{\eps}{\varepsilon}
\newcommand{\Tr}{\mathord{\text{\normalfont\textsc{Tr}}}}
\newcommand{\diam}{\operatorname{diam}}

\newcommand{\spann}{\operatorname{span}}
\newcommand{\LMO}{\mathtt{LMO}}

\DeclareMathOperator*{\argmin}{arg\,min}

\theoremstyle{plain}
\newtheorem{theorem}{Theorem}[section]
\newcommand{\newsharedtheorem}[2]{%
  \newaliascnt{#1}{theorem}%
  \newtheorem{#1}[#1]{#2}%
  \aliascntresetthe{#1}%
}
\newsharedtheorem{lemma}{Lemma}
\newsharedtheorem{example}{Example}
\newsharedtheorem{proposition}{Proposition}
\newsharedtheorem{corollary}{Corollary}
\newsharedtheorem{conjecture}{Conjecture}
\theoremstyle{definition}
\newsharedtheorem{definition}{Definition}
\theoremstyle{remark}
\newsharedtheorem{remark}{Remark}
\crefname{theorem}{Theorem}{Theorems}
\Crefname{theorem}{Theorem}{Theorems}
\crefname{lemma}{Lemma}{Lemmas}
\Crefname{lemma}{Lemma}{Lemmas}
\crefname{example}{Example}{Examples}
\Crefname{example}{Example}{Examples}
\crefname{proposition}{Proposition}{Propositions}
\Crefname{proposition}{Proposition}{Propositions}
\crefname{corollary}{Corollary}{Corollaries}
\Crefname{corollary}{Corollary}{Corollaries}
\crefname{conjecture}{Conjecture}{Conjectures}
\Crefname{conjecture}{Conjecture}{Conjectures}
\crefname{definition}{Definition}{Definitions}
\Crefname{definition}{Definition}{Definitions}
\crefname{remark}{Remark}{Remarks}
\Crefname{remark}{Remark}{Remarks}
\crefname{section}{Section}{Sections}
\Crefname{section}{Section}{Sections}
\crefname{subsection}{Section}{Sections}
\Crefname{subsection}{Section}{Sections}
\crefname{subsubsection}{Section}{Sections}
\Crefname{subsubsection}{Section}{Sections}
\crefname{appendix}{Appendix}{Appendices}
\Crefname{appendix}{Appendix}{Appendices}
\AddToHook{cmd/appendix/after}{%
  \crefalias{section}{appendix}%
  \crefalias{subsection}{subappendix}%
  \crefalias{subsubsection}{subsubappendix}%
}
\crefname{subappendix}{Appendix}{Appendices}
\Crefname{subappendix}{Appendix}{Appendices}
\crefname{subsubappendix}{Appendix}{Appendices}
\Crefname{subsubappendix}{Appendix}{Appendices}
\newif\ificlredition
\iclreditionfalse

\newif\ifproofsinappendix
\proofsinappendixfalse
\ExplSyntaxOn
\seq_new:N \g_paper_proofs_seq
\cs_new_protected:Npn \paper_proof_inline:nn #1#2
  { \begin{proof}[Proof~of~\cref{#1}] #2 \end{proof} }
\cs_new_protected:Npn \paper_proof_defer:nn #1#2
  {
    \par\noindent\emph{Proof~of~\cref{#1}.}~See~\cref{proof:#1}.\par
    \seq_gput_right:Nn \g_paper_proofs_seq
      {
        \subsection{Proof~of~\texorpdfstring{\cref{#1}}{\ref*{#1}}}\label{proof:#1}
        \begin{proof} #2 \end{proof}
      }
  }
\NewDocumentEnvironment{paperproof}{O{auto} m +b}
  {
    \str_case:nnF {#1}
      {
        {inline}{\paper_proof_inline:nn {#2}{#3}}
        {appendix}{\paper_proof_defer:nn {#2}{#3}}
        {auto}{\ifproofsinappendix
          \paper_proof_defer:nn {#2}{#3}
          \else\paper_proof_inline:nn {#2}{#3}\fi}
      }
      {\PackageError{proof-placement}{Unknown~proof~placement:~#1}
        {Use~auto,~inline,~or~appendix.}}
  }{}
\NewDocumentCommand{\PrintDeferredProofs}{}
  {
    \seq_if_empty:NF \g_paper_proofs_seq
      {
        \section{Proofs~of~the~main~results}\label{app:deferred-proofs}
        \seq_map_inline:Nn \g_paper_proofs_seq {##1}
        \seq_gclear:N \g_paper_proofs_seq
      }
  }
\AtEndDocument{
  \seq_if_empty:NF \g_paper_proofs_seq
    {\PackageError{proof-placement}{Deferred~proofs~were~not~printed}
      {Call~PrintDeferredProofs~after~the~last~deferred~proof.}}
}
\ExplSyntaxOff
\proofsinappendixfalse
\hypersetup{pdfauthor={Sebastian Pokutta},
 pdftitle={Lower Bounds for Randomized Linear-Oracle Optimization\\ over Strongly Convex Sets}}
\title{Lower Bounds for Randomized Linear-Oracle Optimization\\ over Strongly Convex Sets}
\author{Sebastian Pokutta\\
 Zuse Institute Berlin and TU Berlin\\
 \texttt{pokutta@zib.de}}
\date{\today}
\begin{document}
\maketitle
\begin{abstract}
Recent results by \citet{halbey2026lower,grimmer2026lower} have established worst-case objective-error lower bounds of order $1/T^2$ after $T$ linear minimization oracle (LMO) calls, matching the upper bound of \citet{garber2015faster} in terms of dependence on $T$ and by doing so, resolving a long-standing open question. However, upon close inspection both lower bounds are somewhat brittle: \citet{halbey2026lower} only applies to the standard Frank--Wolfe algorithm, and its prescribed slow exact-line-search trajectories can amplify sufficiently small initialization errors exponentially, corresponding to roughly one additional bit of initialization accuracy per iteration, as we show; \citet{grimmer2026lower} on the other hand obtain lower bounds for any deterministic LMO-based first-order method via a resisting oracle which adversarially adapts to the queries of the algorithm. In both cases it is conceivable that randomization may circumvent these lower bounds and in particular neither establishes a lower bound for unrestricted randomized first-order LMO methods. We close this gap by proving an $\Omega(L\diam(S)^2/T^2)$ distributional lower bound for randomized first-order linear minimization oracle methods over strongly convex sets. While our construction reuses the family of \citet{grimmer2026lower}, our proof characterizes the conditional distribution of the hidden permutation after adaptive queries and bounds the expected decrease of a combinatorial statistic controlling optimizer variance. We also show that the same error threshold holds with probability at least $3/4$ and yields a stopping-time tail bound. \end{abstract}

\section{Introduction}
\label{sec:introduction}
We are interested in solving the optimization problem
$$
\min_{x \in S} f(x),
$$
where $f$ is an $L$-smooth convex function and $S$ is a compact convex set. The Frank--Wolfe algorithm \citep{frank1956algorithm,levitin1966constrained} replaces potentially expensive projections by a linear minimization oracle (LMO), $\LMO_S(p)\in\argmin_{z\in S}\langle p,z\rangle$ and its
general $O(1/T)$ rate is optimal in the linear-oracle model
\cite{jaggi2013revisiting,lan2013complexity}. However, once we add more structure to both the function and the domain, convergence rates can often be improved; see \citep{CGFWSurvey2022} for an overview. In particular, in the case of strongly convex functions and strongly convex sets, a rate of $O(1/T^2)$ was established in \citet{garber2015faster} and similarly improved convergence rates can be obtained for sharp functions \citep{KDP2018jour}, uniformly convex sets \citep{KAP2020,KAP2021}, and even if the objective is just convex but the set satisfies a mild curvature condition, $o(1/T)$ rates can be obtained \citep{P2026FrankWolfe}. We will be concerned with \emph{distributional} lower bounds for \emph{randomized} LMO-based first-order methods for the case of strongly convex functions over strongly convex sets.

\subsection*{Related work}
\label{sec:existing-bounds}
Most closely related to our work are \citet{halbey2026lower} (and their extension to uniformly convex sets and sharp functions \citep{HRP2026CurvatureFWLowerBounds}) as well as \citet{grimmer2026lower}; we summarize the differences in \cref{tab:lower-bound-scope}.

\citet{halbey2026lower} give slowly converging trajectories at a rate of $\Omega(1/t^2)$ for exact-line-search or short-step
Frank--Wolfe, with a horizon-dependent prescribed initialization. Their result concerns those specific updates, not arbitrary LMO methods. Moreover, as we show in \cref{sec:halbey-diagnosis}, tiny changes in the starting point can be strongly amplified along the slow exact-line-search trajectories. Over a fixed horizon of $T$ iterations, sufficiently small initialization errors can be magnified by roughly $2^T$, up to polynomial factors. Offsetting this local amplification corresponds to roughly one additional binary digit of initialization accuracy per iteration. This describes sensitivity near a prescribed trajectory; it does not mean that every perturbation or rounding error makes Frank--Wolfe converge faster. We also establish the existence of \emph{infinite} slow-envelope trajectories on their unit-disk instance and show that the initializations with this eventual behavior form an uncountable set of planar Lebesgue measure zero (\cref{thm:halbey-measure-zero}); the stated guarantee in \citet{halbey2026lower} is finite-horizon, with an initialization that may depend on the horizon.

Complementing the above, for any \emph{deterministic} LMO-based method returning feasible outputs, \citet{grimmer2026lower} establish an $\Omega(L\diam(S)^2/(T+1)^2)$ objective-error lower bound after $T$ oracle calls. Their resisting oracle completes a hard instance \emph{after} observing the algorithm's queries. Applying this argument separately to each random seed may produce a different hard instance for each seed. A randomized lower bound instead requires an instance fixed independently of the realized seed, on which the algorithm has large expected error. On the other hand \citet{grimmer2026lower} also give one fixed instance hard for every LMO-span method, which is a subclass of all LMO-based methods. This result immediately gives rise to a lower bound for randomized methods but \emph{only} those from the restricted class of LMO-span methods, which is a significant restriction as we will discuss in \cref{sec:grimmer-liu-gap}.

\begin{table}[t]
\caption{Comparison of the lower bounds. The rate is
objective error after $T$ oracle calls.}
\label{tab:lower-bound-scope}
\centering
\small
\setlength{\tabcolsep}{4pt}
\renewcommand{\arraystretch}{1.15}
\begin{tabularx}{\textwidth}{@{}>{\raggedright\arraybackslash}p{2.3cm}YYY>{\centering\arraybackslash}p{1cm}@{}}
\toprule
Result & Method class & Randomization covered? & Data chosen & Rate \\
\midrule
\citep{halbey2026lower} & Exact-line-search or short-step Frank--Wolfe
  & Not an unrestricted-method result
  & Horizon-dependent initialization
  & $T^{-2}$ \\
\citep{grimmer2026lower}, span & LMO-span methods
  & Yes, for every fixed seed
  & One fixed ordered instance
  & $T^{-2}$ \\
\citep{grimmer2026lower}, resisting & Unrestricted first-order LMO
  & Deterministic only
  & Permutation completed from the transcript
  & $T^{-2}$ \\
\textbf{This paper} & Unrestricted first-order LMO with feasible final output on every instance and seed
  & Yes
  & Uniform prior fixed before the seed
  & $T^{-2}$ \\
\bottomrule
\end{tabularx}
\end{table}
\FloatBarrier

\subsection*{Contribution} We show that randomization cannot improve the
corresponding order of convergence, even without restrictions on
query directions or intermediate evaluation points. We do this by providing a hard distribution of instances and establishing a corresponding distributional lower bound that then by Yao's principle \citep{yao1977probabilistic} provides a randomized lower bound and we obtain:
\begin{equation}\label{eq:intro-main-rate}
 \E[f(\widehat x_T)-f^\star]
 =\Omega\!\left(\frac{L\diam(S)^2}{(T+1)^2}\right).
\end{equation}
We follow the general information-theoretic approach from \citet{braun2015information,BPX2014jour}, where we bound the (expected) information gain in each iteration albeit we do not work with information-theoretic quantities such as mutual information and entropy but rather use a combinatorial statistic $\Delta$ through which we lower-bound the conditional optimizer variance.

We retain the feasible sets, common quadratic objective, optimizer and thresholding formula of \citet{grimmer2026lower}, and their conversion of optimizer variance into loss.
The departure is the \emph{fixed uniform prior} over their feasible sets, which turns out to be our hard distribution. In particular, in contrast to the resisting oracle setup, we can no longer respond adversarially to the queries of the algorithm. As a consequence an answer can now potentially reveal significantly more information than in the resisting oracle setup.

The hidden instance is determined by a permutation of a known list of $d$
coordinate weights. Let $\Tr_t$ record the algorithm's seed and its first $t$
queries and answers, and let $m_t\le d$ count the coordinate--weight
assignments not yet explicitly disclosed by a nonzero LMO component.
The permutations compatible with this transcript form a graph under
exchanges of two coordinate weights. We show that this graph has a common
outdegree $\Delta_t$, the number of compatible exchanges from any one of its
vertices. Our key estimate bounds its expected decrease in one query:
\begin{equation}\label{eq:intro-main-estimation}
   \E[\Delta_t-\Delta_{t+1}\mid\Tr_t]\le6m_t.
\end{equation}
Initially $m_0=d$ and $\Delta_0=\binom d2$. Since $m_t\le d$, summing the
expected decreases gives
\[
 \E\Delta_T\ge\binom d2-6dT.
\]
Thus the initial potential is quadratic in $d$, whereas the expected decrease
per query is at most linear in $d$. Choosing $d$ to be a sufficiently large
constant multiple of $T+1$ leaves substantial conditional optimizer variance,
which we convert into optimization error.
A single query may disclose several coordinate weights; our argument bounds
expected progress rather than imposing a one-coordinate-per-query rule.
We also do not require a span restriction.
\Cref{sec:model-family-result} collects the results and their short derivation;
\cref{sec:exact-transcripts,sec:posterior-error} identify the conditional distribution
and its variance; \cref{sec:one-query-progress} gives the full $6m$ argument.

\section{Model, family, and lower bounds}
\label{sec:model-family-result}
\subsection{Oracle model}
For a compact convex set $S\subset\R^d$ and differentiable
$f:\R^d\to\R$, we consider
\begin{equation}\label{eq:problem}
 f^\star\doteq\min_{x\in S}f(x).
\end{equation}
The function is $L$-smooth when its gradient is $L$-Lipschitz, and
$\mu$-strongly convex when
$f(y)\ge f(x)+\langle\nabla f(x),y-x\rangle+\mu\|y-x\|_2^2/2$
for all $x,y$.
\begin{definition}[Strongly convex set]\label{def:strong-set}
A convex set $S$ is $\alpha$-strongly convex if, for $x,y\in S$ and
$\theta\in[0,1]$,
\[
 \theta x+(1-\theta)y+
 B\!\left(0,\frac\alpha2\theta(1-\theta)\|x-y\|_2^2\right)\subseteq S,
\]
where $B(0,r)$ is the closed Euclidean ball.
\end{definition}
A \emph{query} consists of any $(x_t,p_t)\in\R^d\times\R^d$ and returns
$y_t\doteq(f(x_t),\nabla f(x_t),z_{t+1})$, where $z_{t+1}\doteq\LMO_S(p_t)$.
Let $U$ be any \emph{random seed} independent of the hidden instance, and put
\begin{equation}\label{eq:transcript}
 \Tr_t\doteq(U,(x_s,p_s,y_s)_{s<t}),\qquad
 \mathcal H_t\doteq\sigma(\Tr_t).
\end{equation}
Here $\Tr_t$ is the \emph{transcript}, including the seed, and $\mathcal H_t$ is
its generated $\sigma$-algebra: all information available before query $t$;
conditioning on $\Tr_t$ means conditioning on $\mathcal H_t$.
Fixing a realization $U=u$ makes the algorithm's decision rules deterministic;
the transcript may still vary with the hidden instance.
Queries, stopping decisions and outputs are arbitrary measurable functions
of this history. Our argument is purely information-theoretic with minimal assumptions on the randomized algorithm in question beyond first-order and LMO access. In particular, we do not assume any span, atom-hull, computational, or finite-seed restriction. Evaluation points $x_t$ need not belong to $S$: the function and gradient are defined on all of $\R^d$. Only the returned output must be
feasible, on \emph{every instance and every seed}, not merely almost surely.
Ordinary Frank--Wolfe and its feasible variants meet this requirement.
The model provides only the oracle responses specified above; an
exact-line-search oracle for an unknown objective would be additional access.
In our hard family, however, the objective is the same known quadratic for
every permutation, with all its parameters independent of the hidden instance. Thus, evaluating this formula, or computing an exact line search from it, requires no additional oracle access. Only the feasible set is hidden in this family. 
We set $\LMO_{S_\pi}(0)\doteq0$; unused calls can be padded with zero queries.

\subsection{The permuted hard family}

The following construction, including its normalization, is due to
\citet[Sections 2.1 and 3.1]{grimmer2026lower}. Fix $d\ge3$ and define
\begin{equation}\label{eq:constants}
  C \doteq \frac1{\sqrt{d^2+d+2}},
  \qquad w_k \doteq C\sqrt{2k},\quad k=1,\ldots,d.
\end{equation}
For a permutation $\pi$ of $\{1,\ldots,d\}$, let
\begin{equation}\label{eq:set-family}
  S_\pi \doteq \left\{x\in\R^d:
  \frac12\|x\|_2^2+\sum_{i=1}^d w_{\pi(i)}|x_i|\le C^2\right\}.
\end{equation}
Coordinate $i$ carries weight $w_{\pi(i)}$; the integer $\pi(i)$ is its
\emph{weight rank}. We have $2C^2+\sum_k w_k^2=1$.

\begin{lemma}[Geometry of $S_\pi$]\label{lem:geometry}
Every $S_\pi$ is $1$-strongly convex and
$\diam(S_\pi)=2C(2-\sqrt2)$.
\end{lemma}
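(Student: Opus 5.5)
The plan is to rewrite $S_\pi$ as an intersection of Euclidean unit balls, which gives strong convexity at once, and then to compute the diameter from sign symmetry. For brevity write $v_i\doteq w_{\pi(i)}$. For every $x$,
\[
\textstyle\sum_i v_i|x_i|=\max_{s\in\{\pm1\}^d}\sum_i s_iv_ix_i .
\]
Hence
\[
S_\pi=\bigcap_{s\in\{\pm1\}^d}\Bigl\{x:\tfrac12\|x\|_2^2+\langle s\circ v,x\rangle\le C^2\Bigr\}
=\bigcap_{s}\Bigl\{x:\tfrac12\|x+s\circ v\|_2^2\le C^2+\tfrac12\|v\|_2^2\Bigr\}.
\]
Since $v$ is a permutation of the weights, the normalization $2C^2+\sum_k w_k^2=1$ gives $C^2+\tfrac12\|v\|_2^2=\tfrac12$. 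So $S_\pi=\bigcap_s B(-s\circ v,1)$, an intersection of closed unit balls.

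\emph{Strong convexity.} The definition of $\alpha$-strong convexity is preserved under intersections: if every set contains $m+B(0,r)$, so does their intersection. It is also translation invariant. So it suffices to show that $B(0,1)$ is $1$-strongly convex. Take $x,y\in B(0,1)$, $\theta\in[0,1]$, and set $m=\theta x+(1-\theta)y$. The identity $\|m\|^2=\theta\|x\|^2+(1-\theta)\|y\|^2-\theta(1-\theta)\|x-y\|^2$ gives $\|m\|^2\le 1-2r$ with $r=\tfrac12\theta(1-\theta)\|x-y\|^2$. Then $\|m\|\le\sqrt{1-2r}\le 1-r$, because $(1-r)^2=1-2r+r^2\ge 1-2r$ and $r\le \tfrac12$. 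Therefore $m+B(0,r)\subseteq B(0,1)$, as required.

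\emph{Diameter.} The set $S_\pi$ is invariant under $x\mapsto -x$. So if $\rho\doteq\max_{x\in S_\pi}\|x\|_2$, then $\diam(S_\pi)\le 2\rho$ by the triangle inequality, and $\diam(S_\pi)\ge\|x^\ast-(-x^\ast)\|=2\rho$ for a maximizer $x^\ast$. To compute $\rho$, fix $\|x\|_2=t$. The smallest weight is $w_1=C\sqrt2$, so
\[
\textstyle\sum_i v_i|x_i|\ge w_1\|x\|_1\ge w_1 t,
\]
with equality when $x$ is $t$ times the unit vector of the coordinate of rank $1$. Hence the point of norm $t$ that is easiest to keep in $S_\pi$ attains $\tfrac12t^2+\sqrt2Ct$, and $\rho$ is the positive root of $\tfrac12\rho^2+\sqrt2C\rho=C^2$. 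This root is $\rho=-\sqrt2C+\sqrt{2C^2+2C^2}=C(2-\sqrt2)$, which gives $\diam(S_\pi)=2C(2-\sqrt2)$.

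The only step that needs an idea is the ball-intersection representation; the remaining steps are routine. The direct route, perturbing the midpoint inside the sublevel set of $\tfrac12\|x\|^2+\sum_i v_i|x_i|$, would run into the nonsmooth term. The max-over-signs rewrite avoids that, and it is exactly where the normalization $2C^2+\sum_k w_k^2=1$ enters.
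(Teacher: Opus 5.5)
Your proof is correct and complete. The max-over-signs rewrite together with $2C^2+\sum_k w_k^2=1$ gives $S_\pi=\bigcap_{s\in\{\pm1\}^d}B(-s\circ v,1)$. Strong convexity with parameter $1$ then follows because the definition is stable under intersections and translations and the unit ball is $1$-strongly convex. For the diameter, central symmetry and $\sum_i v_i|x_i|\ge w_1\|x\|_1\ge w_1\|x\|_2$, with equality on the axis of the coordinate of rank $1$, correctly give maximal norm $C(2-\sqrt2)$ and hence diameter $2C(2-\sqrt2)$.

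The paper gives no argument of its own here; it defers to the strong-convexity and diameter lemmas of \citet[Section 2.1]{grimmer2026lower}. Your write-up therefore serves as a self-contained replacement, so I cannot compare it step by step with a proof in the paper. It also explains why the normalization $2C^2+\sum_k w_k^2=1$ is recorded right after \eqref{eq:set-family}: that normalization is exactly what makes every ball in your representation have radius $1$, and hence gives strong-convexity parameter $1$. One cosmetic point: rename your maximal norm, since $\rho$ already denotes the boundary parameter from \eqref{eq:rho}.
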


See the strong-convexity and diameter lemmas of
\citet[Section 2.1]{grimmer2026lower}.
Next we pick $\rho\in(0,1)$ to be the unique solution of the boundary equation
\begin{equation}\label{eq:rho}
 \sum_{k=1}^d\left[
  \frac12\rho^2(w_d-w_k)^2+\rho w_k(w_d-w_k)
 \right]=C^2,
\end{equation}
whose existence and uniqueness are proved in \citet[Section 3.1]{grimmer2026lower}.
Define
\begin{equation}\label{eq:objective}
 M \doteq \frac{\rho}{1-\rho}w_d,
 \qquad f(x) \doteq \frac12\|x-M\mathbf1\|_2^2.
\end{equation}
This objective is $1$-smooth, $1$-strongly convex, and used for all instances.
Moreover, for every instance $\pi$, the choice of $\rho$ in \eqref{eq:rho}
places the constrained minimizer on the boundary of $S_\pi$.

\begin{lemma}[Optimal points]\label{lem:optima}
The minimizer of $f$ over $S_\pi$ is unique, with
\begin{align}
  (x_\star^{(\pi)})_i =\rho\bigl(w_d-w_{\pi(i)}\bigr) \qquad \text{and} \qquad
  \rho&\ge\frac2{d^2}.\label{eq:optimum} %
\end{align}
\end{lemma}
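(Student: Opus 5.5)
Since $f$ is $1$-strongly convex and $S_\pi$ is compact and convex, the minimizer over $S_\pi$ exists and is unique. The plan is therefore to exhibit the candidate point and certify it through the KKT conditions. We then bound $\rho$ from below using the boundary equation \eqref{eq:rho}.

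\textbf{Optimality of the candidate.} Set $x^\star_i\doteq\rho(w_d-w_{\pi(i)})\ge0$ and write $g(x)\doteq\frac12\|x\|_2^2+\sum_i w_{\pi(i)}|x_i|$. By \eqref{eq:rho}, summing over $i$ (which permutes the index $k=\pi(i)$) gives
\[
g(x^\star)=\sum_k\bigl[\tfrac12\rho^2(w_d-w_k)^2+\rho w_k(w_d-w_k)\bigr]=C^2,
\]
so $x^\star$ lies on the boundary of $S_\pi$.

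Next, look for a multiplier $\lambda\ge0$ with $0\in\nabla f(x^\star)+\lambda\,\partial g(x^\star)$. For a coordinate with $x^\star_i>0$, this condition reads $x^\star_i-M+\lambda(x^\star_i+w_{\pi(i)})=0$. Take $\lambda=(1-\rho)/\rho$. Then
\[
(1+\lambda)x^\star_i=\frac{x^\star_i}{\rho}=w_d-w_{\pi(i)},
\]
and $M-\lambda w_{\pi(i)}=\frac{\rho}{1-\rho}w_d-\frac{1-\rho}{\rho}w_{\pi(i)}$. These two expressions do not obviously match, so I would recheck $M$. Solving directly, $x_i=(M-\lambda w_{\pi(i)})/(1+\lambda)$. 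Matching this with $\rho(w_d-w_{\pi(i)})$ requires $\lambda/(1+\lambda)=\rho$, i.e. $\lambda=\rho/(1-\rho)$, and then $M/(1+\lambda)=M(1-\rho)=\rho w_d$, i.e. $M=\frac{\rho}{1-\rho}w_d$, which agrees with \eqref{eq:objective}. So the correct multiplier is $\lambda=\rho/(1-\rho)>0$.

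For the coordinate with $\pi(i)=d$ we have $x^\star_i=0$, and the condition becomes $M\in\lambda[-w_d,w_d]$. This holds with equality, since $\lambda w_d=M$. Because $g$ is convex, these KKT conditions with $\lambda\ge0$ and an active constraint are sufficient for optimality. Strong convexity of $f$ then gives uniqueness.

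\textbf{The bound $\rho\ge 2/d^2$.} Denote the left side of \eqref{eq:rho} by $\phi(\rho)$. It is increasing in $\rho\ge0$ and satisfies $\phi(0)=0$. Hence it suffices to show $\phi(2/d^2)\le C^2$. Dropping the quadratic term is not possible here, since that would bound $\phi$ from below, not above. Instead, use the crude estimates $(w_d-w_k)^2\le w_d^2$ and $w_k(w_d-w_k)\le w_d^2/4$. These give
\[
\phi(\rho)\le d\,w_d^2\bigl(\tfrac12\rho^2+\tfrac14\rho\bigr)=2C^2d^2\bigl(\tfrac12\rho^2+\tfrac14\rho\bigr).
\]
At $\rho=2/d^2$ this equals $C^2\bigl(4/d^2+1\bigr)$, which is slightly too large.

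This final inequality is the main obstacle: the constant must be tightened. The estimate $(w_d-w_k)^2\le w_d^2$ is very loose. Summing exactly instead, one uses $\sum_k w_k(w_d-w_k)=C^2\bigl(2\sqrt{2d}\sum_k\sqrt{2k}\cdot\tfrac1{\sqrt2}\cdots\bigr)$, which evaluates to roughly $C^2\bigl(2d^2-d^2\cdot\tfrac43\bigr)\approx\tfrac23C^2d^2$. Sums of the form $\sum_k\sqrt{k}$ can be bounded by integrals. With this, the linear term at $\rho=2/d^2$ is about $\tfrac43C^2$, which still exceeds $C^2$. I therefore suspect the intended argument runs the other way: assume $\rho<2/d^2$ and derive $\phi(\rho)<C^2$, which contradicts \eqref{eq:rho}. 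Alternatively, the proof may rely on a sharper sum estimate taken from \citet[Section 3.1]{grimmer2026lower}. I would first try computing $\sum_k w_k(w_d-w_k)$ precisely, using $\sum_k\sqrt k\le\tfrac23(d+1)^{3/2}$-type integral bounds, and check whether the constant $2$ is attainable. If it is not, I would cite the estimate from \citet{grimmer2026lower} directly.
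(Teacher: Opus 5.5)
Your KKT verification is correct. After your self-correction, $\lambda=\rho/(1-\rho)$ gives $x^\star_i=(1-\rho)M-\rho w_{\pi(i)}=\rho(w_d-w_{\pi(i)})$ on coordinates with $\pi(i)<d$. The remaining coordinate satisfies $M=\lambda w_d$, and \eqref{eq:rho} makes the constraint active. Sufficiency and uniqueness then follow from convexity and strong convexity. Drop the false start with $\lambda=(1-\rho)/\rho$. This half is more self-contained than the paper, which simply cites \citet{grimmer2026lower} for the whole lemma.

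\textbf{The gap: you never prove $\rho\ge 2/d^2$.} Your refined estimate is also arithmetically wrong. If it were right, it would refute the lemma rather than support it. Write $\phi$ for the left side of \eqref{eq:rho}. Since $\phi$ is increasing, a linear term of about $\tfrac43C^2>C^2$ at $\rho=2/d^2$ would force $\rho<2/d^2$. Arguing ``the other way'' by contradiction is the same inequality and cannot rescue this.

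In fact $w_d\sum_kw_k=2C^2\sqrt d\sum_k\sqrt k\approx\tfrac43C^2d^2$ and $\sum_kw_k^2=C^2d(d+1)$. Hence $\sum_kw_k(w_d-w_k)\approx\tfrac13C^2d^2$, the linear term at $2/d^2$ is about $\tfrac23C^2$, and asymptotically $\rho\approx3/d^2$.

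Your crude bound fails only narrowly. The fix is to bound termwise by quantities that vanish at $k=d$. With $w_k=C\sqrt{2k}$, AM--GM gives $\sqrt k(\sqrt d-\sqrt k)=\sqrt{dk}-k\le\frac{d-k}{2}$. Also $(\sqrt d-\sqrt k)^2\le(\sqrt d-\sqrt k)(\sqrt d+\sqrt k)=d-k$. Hence
\[
\phi(\rho)\le 2C^2\sum_{k=1}^d (d-k)\Bigl(\tfrac12\rho^2+\tfrac12\rho\Bigr)=C^2\rho(1+\rho)\frac{d(d-1)}{2},
\qquad
\phi\bigl(2/d^2\bigr)\le C^2\,\frac{(d-1)(d^2+2)}{d^3}<C^2,
\]
since $d^3-(d-1)(d^2+2)=(d-1)^2+1>0$. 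All coefficients of $\phi$ are nonnegative, and the linear coefficient at $k=1$ is positive. So $\phi$ is strictly increasing, and $\phi(\rho)=C^2$ yields $\rho>2/d^2$.
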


See the permuted-minimizer lemma of \citet[Section 3.1]{grimmer2026lower}.

\begin{remark}[Common optimal value]\label{rem:common-optimum-value}
Coordinate permutations preserve the common quadratic objective. Although
the optimal points vary with $\pi$, all instances have the same optimal value
\[
 f^\star=\frac12\sum_{k=1}^d
       \bigl(\rho(w_d-w_k)-M\bigr)^2.
\]
For a fixed terminal transcript, including the seed, the output $x$ is fixed:
the output rule is a deterministic function of that transcript. Its loss
$f(x)-f^\star$ is consequently constant over all compatible instances, so its
conditional expected loss equals its actual loss on each completion.
\end{remark}

For admissible deterministic methods $A$ with budget $T$, measurable
mixtures $\nu$, and objective-gap loss $\mathcal L$, put
\[
 \begin{aligned}
 V_{\rm dist}(\mu) \doteq\inf_A\E_{I\sim\mu}\mathcal L(A,I) \qquad \text{and} \qquad
 V_{\rm rand} \doteq\inf_\nu\sup_I\E_{A\sim\nu}\mathcal L(A,I).
 \end{aligned}
\]
The method $A$ may know $\mu$. Yao's inequality $V_{\rm dist}(\mu)\le V_{\rm rand}$ makes a
fixed-prior bound sufficient. \Cref{sec:minimax} gives the full
minimax formulation and proof.
\begin{proposition}[Uniform prior]\label{prop:uniform-optimal}
For the permuted family,
\begin{equation}\label{eq:uniform-optimal}
  V_{\rm dist}(\operatorname{Unif})=V_{\rm rand}.
\end{equation}
\end{proposition}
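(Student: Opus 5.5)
We prove the two inequalities $V_{\rm dist}(\operatorname{Unif})\le V_{\rm rand}$ and $V_{\rm rand}\le V_{\rm dist}(\operatorname{Unif})$ separately. The first is Yao's inequality specialized to $\mu=\operatorname{Unif}$. For any mixture $\nu$,
\[
 \sup_\pi\E_{A\sim\nu}\mathcal L(A,\pi)\ge\E_{\pi\sim\operatorname{Unif}}\E_{A\sim\nu}\mathcal L(A,\pi)
 =\E_{A\sim\nu}\E_{\pi}\mathcal L(A,\pi)\ge\inf_A\E_\pi\mathcal L(A,\pi),
\]
where Fubini applies because the loss is nonnegative and jointly measurable in the seed and the instance. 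The instance space is the finite symmetric group, so measurability in $\pi$ is trivial. Taking the infimum over $\nu$ gives the claim.

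For the reverse inequality we symmetrize. Let $\sigma$ act on $\R^d$ by the coordinate permutation $P_\sigma$. Because the weights enter through $\pi(i)$, we have $P_\sigma S_\pi=S_{\pi\circ\sigma^{-1}}$ (or the analogous composition, fixed by the convention). The objective $f(x)=\tfrac12\|x-M\mathbf1\|_2^2$ is invariant under $P_\sigma$, and $\nabla f(P_\sigma x)=P_\sigma\nabla f(x)$. We also need the LMO to be equivariant: $\LMO_{P_\sigma S}(P_\sigma p)=P_\sigma\LMO_S(p)$. This holds whenever the minimizer is unique, which is automatic for nonzero $p$ since each $S_\pi$ is strongly convex by \cref{lem:geometry}. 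For $p=0$ it holds by the convention $\LMO(0)=0$.

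Given a deterministic method $A$ and a permutation $\sigma$, define the conjugated method $A^\sigma$. It issues the queries $(P_\sigma x_t,P_\sigma p_t)$, where $(x_t,p_t)$ is what $A$ would issue on the pulled-back transcript. It maps each answer back by $P_\sigma^{-1}$ before feeding it to $A$, and it outputs $P_\sigma\widehat x$. By the equivariances above, running $A^\sigma$ on instance $S_{\pi}$ reproduces exactly the run of $A$ on the correspondingly transformed instance, with every point transformed by $P_\sigma$. Since $f\circ P_\sigma=f$ and the optimal value $f^\star$ is common to all instances (\cref{rem:common-optimum-value}), the losses agree: $\mathcal L(A^\sigma,\pi)=\mathcal L(A,\pi\circ\sigma)$ (up to the composition convention). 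Feasibility of the output on every instance is preserved, because $P_\sigma$ maps the feasible set of one instance onto that of the other. Hence $A^\sigma$ is admissible with the same budget and the same stopping structure.

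Now let $\nu_A$ be the uniform mixture of $\{A^\sigma\}_\sigma$ (seed $\sigma$ uniform), and fix any instance $\pi$. Then
\[
 \E_{\sigma}\mathcal L(A^\sigma,\pi)=\frac1{d!}\sum_\sigma\mathcal L(A,\pi\circ\sigma)=\E_{\pi'\sim\operatorname{Unif}}\mathcal L(A,\pi'),
\]
because $\sigma\mapsto\pi\circ\sigma$ is a bijection. So $V_{\rm rand}\le\sup_\pi\E_{\nu_A}\mathcal L=\E_{\operatorname{Unif}}\mathcal L(A,\cdot)$. Taking the infimum over $A$ gives $V_{\rm rand}\le V_{\rm dist}(\operatorname{Unif})$.

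The main obstacle is the careful verification of equivariance. This means getting the composition convention right, confirming LMO uniqueness (or at least equivariance of the chosen selection), and checking that $A^\sigma$ is a legitimate measurable, feasible method in the paper's oracle model. The rest is bookkeeping, and a finite mixture is clearly an admissible $\nu$.
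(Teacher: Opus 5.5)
Your proposal is correct and is essentially the paper's proof. One direction is Yao's inequality for the uniform prior; the other symmetrizes a deterministic method $A$ by running the conjugated method $A^\sigma$ with $\sigma$ uniform (simulating $A$ on $P_\sigma^{-1}S_\pi=S_{\pi\circ\sigma}$), so that transitivity makes the expected loss on every instance equal to the uniform-prior average loss of $A$, and your explicit checks of LMO equivariance and of the common optimal value fill in details the paper leaves implicit.
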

\begin{paperproof}{prop:uniform-optimal}
For a coordinate permutation $g$, let $P_g$ be its permutation matrix.
Relabeling sends $S_\pi$ to
$P_gS_\pi=S_{\pi\circ g^{-1}}$, and $P_g\mathbf1=\mathbf1$ makes the
objective invariant. To run the conjugated method $gA$ on $S$, simulate $A$
on $P_g^{-1}S$: transform internal queries $(x,p)$ to $(P_gx,P_gp)$ and
transform observed gradients and LMO answers back by $P_g^{-1}$. Function
values stay unchanged, and the final internal output is mapped by $P_g$.
This preserves feasibility, objective gap, and query count. Thus the method
class and loss are invariant under a transitive action on the $d!$ instances.
The inequality $V_{\rm dist}(\operatorname{Unif})\le V_{\rm rand}$ follows
from \eqref{eq:yao-inequality}. Conversely, fix a deterministic method $A$,
choose $g$ uniformly, and run $gA$. On any fixed instance, invariance and
transitivity make its expected loss equal to the uniform-prior average loss
of $A$. This randomized method therefore has the same expected loss on every
instance. Taking the infimum over $A$ gives
$V_{\rm rand}\le V_{\rm dist}(\operatorname{Unif})$.
\end{paperproof}

\begin{lemma}[Rescaling]\label{lem:rescaling-paper}
For $L,\alpha>0$, the rescaling $\widetilde S\doteq\alpha^{-1}S$ and $\widetilde f(x)\doteq\frac{L}{\alpha^2}f(\alpha x)$
takes a $1$-smooth, $1$-strongly convex $f$ and $1$-strongly convex $S$
to an $L$-smooth, $L$-strongly convex objective and an $\alpha$-strongly convex
set. Diameter $D$ and gap $g$ become $D/\alpha$ and $Lg/\alpha^2$;
LMO transcripts correspond under $x\mapsto\alpha x$.
\end{lemma}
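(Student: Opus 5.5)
The plan is to verify the three claims of \cref{lem:rescaling-paper} (smoothness/strong convexity, set convexity, and the correspondences) by direct substitution, using only the chain rule and \cref{def:strong-set}.

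\emph{Objective.} From $\widetilde f(x)=\frac{L}{\alpha^2}f(\alpha x)$ the chain rule gives $\nabla\widetilde f(x)=\frac{L}{\alpha}\nabla f(\alpha x)$. Then
\[
\|\nabla\widetilde f(x)-\nabla\widetilde f(y)\|_2=\frac{L}{\alpha}\|\nabla f(\alpha x)-\nabla f(\alpha y)\|_2\le\frac{L}{\alpha}\,\alpha\|x-y\|_2=L\|x-y\|_2,
\]
so $\widetilde f$ is $L$-smooth. For strong convexity, I apply the $1$-strong-convexity inequality of $f$ at the points $\alpha x,\alpha y$ and multiply by $L/\alpha^2$. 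The linear term becomes $\frac{L}{\alpha^2}\langle\nabla f(\alpha x),\alpha(y-x)\rangle=\langle\nabla\widetilde f(x),y-x\rangle$. The quadratic term becomes $\frac{L}{\alpha^2}\cdot\frac12\alpha^2\|y-x\|_2^2$, so $\widetilde f$ is $L$-strongly convex.

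\emph{Set.} Take $\widetilde x=x/\alpha$ and $\widetilde y=y/\alpha$ in $\widetilde S$, with $x,y\in S$. Dividing the inclusion of \cref{def:strong-set} for $S$ with parameter $1$ by $\alpha$ gives
\[
\theta\widetilde x+(1-\theta)\widetilde y+B\!\left(0,\tfrac{1}{2\alpha}\theta(1-\theta)\|x-y\|_2^2\right)\subseteq\widetilde S .
\]
Since $\|x-y\|_2^2=\alpha^2\|\widetilde x-\widetilde y\|_2^2$, the radius equals $\frac{\alpha}{2}\theta(1-\theta)\|\widetilde x-\widetilde y\|_2^2$. This is exactly $\alpha$-strong convexity of $\widetilde S$.

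\emph{Diameter, gap, and transcripts.} The diameter scales linearly, giving $D/\alpha$. Next, $x\in\widetilde S$ if and only if $\alpha x\in S$, so $\min_{\widetilde S}\widetilde f=\frac{L}{\alpha^2}f^\star$. Hence a point $x$ has gap $\frac{L}{\alpha^2}\bigl(f(\alpha x)-f^\star\bigr)$, i.e., gap $g$ becomes $Lg/\alpha^2$. For the LMO, $\argmin_{z\in\widetilde S}\langle p,z\rangle=\alpha^{-1}\argmin_{z'\in S}\langle p,z'\rangle$, because the objective only gets multiplied by the positive factor $1/\alpha$. Consequently $\LMO_{\widetilde S}(p)=\alpha^{-1}\LMO_S(p)$, with tie-breaking transported along the same map. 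The function values and gradients relate by the fixed factors $L/\alpha^2$ and $L/\alpha$.

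It follows that a query $(x,p)$ on the rescaled problem corresponds to the query $(\alpha x,p)$ on the original one. The answers are mutually computable via known scalings, and feasible outputs correspond under $x\mapsto\alpha x$, preserving the query count. The only point needing care is this bijection of methods and transcripts, including tie-breaking and the convention $\LMO(0)=0$, which is preserved since $0\mapsto0$. Everything else is routine.
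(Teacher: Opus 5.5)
Your proposal is correct and uses essentially the same simulation argument as the paper: a scaled query $(x,p)$ is sent to $(\alpha x,p)$, the base value and gradient are multiplied by $L/\alpha^2$ and $L/\alpha$, the base LMO answer is divided by $\alpha$, and the output is mapped by $x\mapsto\alpha x$, which preserves feasibility and the query count. The only difference is that you also verify the smoothness, strong-convexity, set-curvature, diameter and gap scalings directly, via the chain rule and by dividing the inclusion of \cref{def:strong-set} by $\alpha$, whereas the paper cites these geometric facts from \citet{grimmer2026lower}.
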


See the rescaling lemma of \cite[Section 2]{grimmer2026lower}. For completeness we include a proof due to the additional measurable maps that need to be preserved.

\begin{paperproof}{lem:rescaling-paper}
Fix a seed of a method for the scaled problem and simulate it using the
base oracle. Send its query $(x,p)$ to $(\alpha x,p)$; feed back the base
value and gradient multiplied by $L/\alpha^2$ and $L/\alpha$, and the
base LMO answer divided by $\alpha$. If the scaled method returns
$\widehat x$, the base simulator returns $\alpha\widehat x$.
Its base gap multiplied by $L/\alpha^2$ is the scaled gap.
These measurable maps preserve the first stopping decision and universal
final feasibility, without restricting the method's directions.
\end{paperproof}

\subsection{Derivation from the two main estimates}\label{sec:consequences}
In the later sections we will construct a combinatorial statistic $\Delta_t \geq 0$ from a given transcript, with $\Delta_0=d(d-1)/2$ and $\Delta_{t+1}\le\Delta_t$. For this quantity we will establish two key estimates
\begin{equation}\label{eq:proof-roadmap}
 \E[\Delta_t-\Delta_{t+1}\mid\mathcal H_t] \leq 6d
 \qquad \text{and} \qquad
 f(\widehat x_T)-f^\star \geq \frac{\rho^2 C^2}{48d^2} \Delta_T^2.
\end{equation}
The first is \cref{cor:one-query}, using $m_t\le d$. It bounds the expected
decrease of our combinatorial uncertainty statistic, averaging over hidden
permutations compatible with the current transcript; the seed and past
observations are already fixed.
The second follows from \cref{cor:conditional-error} and the common
optimal value in \cref{rem:common-optimum-value}. It converts the lower bound
on conditional optimizer variance measured by $\Delta_T$ into a lower bound on the primal gap; it is trivial when
no coordinates remain unresolved. Our main work will be to bound the
expected decrease.

\subsection{Main results}
In the considered model here the method knows the common objective, the scalar parameters and the uniform prior. Only the permutation, sampled independently of the seed, is hidden.

\begin{theorem}[Distributional lower bound]\label{thm:main-paper}
For every integer $T\ge1$, let $d \doteq 64(T+1)$ and let $\Pi$ be uniform
on the $d!$ sets \eqref{eq:set-family}. Every randomized first-order LMO
method using at most $T$ calls and returning a feasible point on every
instance and every seed satisfies
\begin{equation}\label{eq:main-paper}
 \E_{\Pi,U}[f(\widehat x_T)-f(x_\star^{(\Pi)})]
 \ge c\,\frac{\diam(S_\Pi)^2}{(T+1)^2},
 \qquad c \doteq (3+2\sqrt2)/(6144\cdot64^2).
\end{equation}
For every $L,\alpha>0$, positive rescaling gives the right side
$cL\diam(S)^2/(T+1)^2$, an $L$-smooth, $L$-strongly convex objective and
an $\alpha$-strongly convex feasible set. Some fixed permutation has at
least this expected loss over the seed alone.
\end{theorem}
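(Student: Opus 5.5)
The plan is to combine the two estimates in \eqref{eq:proof-roadmap} with the prior $\Pi$ uniform, and then convert the resulting bound into the diameter normalization of \cref{lem:geometry}. First I fix a seed and use the tower property over $\mathcal H_t$. The first estimate in \eqref{eq:proof-roadmap} (that is, \cref{cor:one-query} with $m_t\le d$) then gives $\E[\Delta_t-\Delta_{t+1}]\le 6d$ for every $t<T$. Zero-padding the queries makes the number of calls exactly $T$, and stopping decisions only shorten the transcript; monotonicity $\Delta_{t+1}\le\Delta_t$ shows that stopping early cannot hurt. Summing over $t$ yields
\[
\E\Delta_T\ge \binom d2-6dT .
\]
With $d=64(T+1)$ we have $6dT\le \tfrac{6}{64}d^2$ and $\binom d2\ge \tfrac{d^2}{2}-\tfrac d2\ge \tfrac{d^2}{2}\cdot\tfrac{127}{128}$. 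Hence $\E\Delta_T\ge \beta d^2$ for an explicit constant $\beta$; a crude value such as $\beta=1/4+1/8$ suffices for our purposes.

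Next I apply the second estimate in \eqref{eq:proof-roadmap} together with Jensen's inequality:
\[
\E[f(\widehat x_T)-f^\star]\ge \frac{\rho^2C^2}{48d^2}\,\E\Delta_T^2\ge\frac{\rho^2C^2}{48d^2}(\E\Delta_T)^2\ge \frac{\rho^2C^2\beta^2d^2}{48}.
\]
By \cref{lem:optima}, $\rho\ge 2/d^2$, so this is at least $\tfrac{4\beta^2C^2}{48d^2}$. The diameter is $\diam(S_\pi)^2=4C^2(2-\sqrt2)^2$, which gives $C^2=\diam^2(3+2\sqrt2)/(4\cdot 4)$. This uses $(2-\sqrt2)^{-2}=(3+2\sqrt2)/2$, since $(2-\sqrt2)^2=2(3-2\sqrt2)$. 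Substituting, and using $d^2=64^2(T+1)^2$, produces a bound of the form $c\,\diam^2/(T+1)^2$ with $c$ proportional to $(3+2\sqrt2)/64^2$. I will track the numerical factors to reach exactly $6144$. Note that $6144=48\cdot128$, so a choice like $\beta^2\cdot 4/(48\cdot 8)\ge 1/6144$, i.e.\ $\beta^2\ge 1/32$ with $\beta\ge 1/4$, is comfortably achievable. Since $f^\star=f(x_\star^{(\Pi)})$ for every instance by \cref{rem:common-optimum-value}, the left side matches \eqref{eq:main-paper}. The diameter is the same for all $\pi$, so $\diam(S_\Pi)$ is deterministic.

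For the rescaled statement I invoke \cref{lem:rescaling-paper}: the gap scales by $L/\alpha^2$ and $\diam^2$ by $1/\alpha^2$, so the ratio picks up the factor $L$. Transcripts, feasibility, and the call count are preserved, so any scaled method induces a base method with the same expected gap. For the last claim, the expectation over $\Pi$ is an average over $d!$ permutations of $\E_U[\text{loss}\mid\Pi=\pi]$ (by Fubini, using independence of $U$ and $\Pi$). Some term is therefore at least the average.

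The main obstacle is not in this assembly; it lies in the two cited estimates, chiefly the one-query bound \cref{cor:one-query}. Within this proof, the delicate points are bookkeeping. I must justify Jensen's step, which is valid because squaring is convex and $\Delta_T\ge0$. I must also check that adaptive stopping and a variable number of calls are handled, either by padding with $\LMO_{S_\pi}(0)=0$ queries or by noting that the conditional bound holds at each step.
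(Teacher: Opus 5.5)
Your proposal is correct and follows essentially the same route as the paper. You fix the seed, sum the one-query estimate to bound $\E\Delta_T$ from below (your $3d^2/8$ is even sharper than the paper's $d^2/8$), combine the pointwise variance-to-loss bound with Jensen, use $\rho\ge2/d^2$ and the diameter formula, then average over the seed, extract a fixed permutation, and rescale. One harmless slip: $(2-\sqrt2)^{-2}=(3+2\sqrt2)/2$ gives $C^2=(3+2\sqrt2)\diam(S_\pi)^2/8$, not $/16$, and the condition you actually need is then $\beta^2\ge1/64$; both errors are conservative, so the constant $c$ is still reached.
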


\begin{paperproof}{thm:main-paper}
Fix the seed. Summing the first estimate gives
\begin{equation}\label{eq:remaining-deficit}
 \E\Delta_T\ge\Delta_0-6Td\ge d^2/8
 \qquad(d=64(T+1)),
\end{equation}
since $\Delta_0\ge d^2/4$ and $6Td\le d^2/8$.
The second estimate and Jensen's inequality then yield
\[
 \E[f(\widehat x_T)-f^\star]
 \ge\frac{\rho^2C^2d^2}{3072}
 \ge\frac{C^2}{768d^2}
 =\frac{3+2\sqrt2}{6144}\frac{\diam(S_\pi)^2}{d^2}.
\]
Using $\rho\ge2/d^2$ and
$C^2=(3+2\sqrt2)\diam(S_\pi)^2/8$.
Substitute $d=64(T+1)$, average over the independent seed, and then select
a fixed instance from the finite average. \Cref{lem:rescaling-paper}
gives every $L,\alpha>0$.
\end{paperproof}

\begin{corollary}[Query complexity]\label{cor:complexity-paper}
There are instances requiring
$T=\Omega((L\diam(S)^2/\eps)^{1/2})$ queries to guarantee expected gap at
most $\eps$, under the same universal final-feasibility requirement.
\end{corollary}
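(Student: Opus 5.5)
The plan is to invert \cref{thm:main-paper} directly, after using \cref{lem:rescaling-paper} to fix the scale. Fix $L,\alpha>0$ and $\eps>0$. For each horizon $T\ge1$, \cref{thm:main-paper} supplies a concrete instance: dimension $d=64(T+1)$, the rescaled common objective $\widetilde f$ and the rescaled set $\widetilde S_\pi=\alpha^{-1}S_\pi$, with a fixed permutation $\pi$. On this instance, every randomized first-order LMO method using at most $T$ calls and returning a feasible point on every instance and seed has expected gap at least $c\,L\diam(\widetilde S_\pi)^2/(T+1)^2$. The diameter of $\widetilde S_\pi$ is $2C(2-\sqrt2)/\alpha$ by \cref{lem:geometry}. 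It depends on $T$ through $d$, but the bound in \cref{thm:main-paper} is already written in terms of $\diam(S)$, so I will state the complexity with $D\doteq\diam(S)$ of the instance itself.

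Next I contrapose. Suppose a method guarantees expected gap at most $\eps$ on this instance using at most $T$ calls. Then
\[
 \eps\ \ge\ c\,\frac{L D^2}{(T+1)^2},
 \qquad\text{hence}\qquad
 T+1\ \ge\ \sqrt{c}\,\Bigl(\frac{LD^2}{\eps}\Bigr)^{1/2}.
\]
Since $T\ge1$ gives $T\ge (T+1)/2$, this yields $T\ge \tfrac{\sqrt c}{2}(LD^2/\eps)^{1/2}$, which is $\Omega\bigl((LD^2/\eps)^{1/2}\bigr)$. Methods that stop early are covered, because unused calls are padded with zero queries. A method using fewer than $T$ calls is therefore a method with budget $T$, and the theorem applies to it.

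The step needing the most care is the quantifier order: the instance must not depend on the method. One choice handles this. Given a target $\eps$, take $T$ to be the largest integer with $c\,LD_T^2/(T+1)^2>\eps$, where $D_T$ is the diameter at dimension $64(T+1)$. If no $T\ge1$ satisfies this, the claim is trivial for bounded $LD^2/\eps$, since the $\Omega$ bound is then at most a constant. Any method reaching gap at most $\eps$ with at most $T$ calls would contradict \cref{thm:main-paper} on the fixed instance for that $T$. So at least $T+1$ calls are needed, and $T+1$ is of order $(LD_T^2/\eps)^{1/2}$.

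The instance produced depends only on $\eps$ and not on the method. This is where the final clause of \cref{thm:main-paper} is used: a fixed permutation attains at least the averaged loss. The requirement that the final output be feasible on every instance and every seed carries over verbatim, because it is a hypothesis of \cref{thm:main-paper}.
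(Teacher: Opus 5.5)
Your core inequality is the paper's one-line proof: a method with at most $T$ calls and expected gap at most $\eps$ forces $\eps\ge c\,LD^2/(T+1)^2$, i.e.\ $(T+1)^2\ge cLD^2/\eps$.

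The error is in the step you single out as needing the most care. You read the final clause of \cref{thm:main-paper} as giving one permutation $\pi$, depending only on $T$ (hence on $\eps$), on which \emph{every} admissible method has large expected gap. The clause has the other quantifier order. For each fixed method, one averages over the seed and then selects, from the finite uniform average, a permutation on which \emph{that} method does badly. The witness may therefore depend on the method; it only may not depend on the realized seed.

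Your stronger claim is false in this model. By the paper's one-call separation, for any specified $S_{\pi_0}$ consider the method that queries $p^\star=\nabla f(x_\star^{(\pi_0)})$ and outputs $z_1=\LMO_S(p^\star)$. It is feasible on every instance and every seed and has zero gap on $S_{\pi_0}$. Hence the sentences ``on this instance, every randomized \dots\ method \dots\ has expected gap at least'' and ``the instance produced depends only on $\eps$ and not on the method'' cannot be proved. The requirement you set yourself (instance independent of the method) is the wrong one. The correct requirement is \eqref{eq:randomized-quantifiers}: the instance must be independent of the seed.

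What is fixed before the method is the family $\{S_\pi\}$ with its uniform prior, and that is what ``there are instances'' in the corollary refers to. The repair is one line. A method that guarantees expected gap at most $\eps$ on every $S_\pi$ of the family with at most $T$ calls has uniform-average gap at most $\eps$. This contradicts \eqref{eq:main-paper} whenever $cLD^2/(T+1)^2>\eps$. Equivalently, each randomized method is defeated by some $S_\pi$ that depends on the method but not on the seed.

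Two smaller points. First, with $\alpha$ fixed, the strict inequality defining your $T$ only gives the upper bound $T+1<(cLD_T^2/\eps)^{1/2}$. The lower bound you need comes from maximality of $T$, i.e.\ $(T+2)^2\ge cLD_{T+1}^2/\eps$, together with $D_{T+1}\ge\frac23D_T$ and $T+2\le\frac32(T+1)$. Second, it is cleaner to fix the target diameter $D$ and use \cref{lem:rescaling-paper} to choose $\alpha$ for each $T$ so that $\diam(\alpha^{-1}S_\pi)=D$; then the inversion is immediate.
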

\begin{proof}
Rearrange \eqref{eq:main-paper}:
$(T+1)^2\ge cL\diam(S)^2/\eps$ is necessary.
\end{proof}

\begin{theorem}[Constant-probability lower bound]
\label{thm:constant-probability}
Put $c_0 \doteq \frac{3+2\sqrt2}{6144\cdot64^2}$. Under the dimension, prior, rescaling and method
assumptions of \cref{thm:main-paper},
\begin{equation}\label{eq:constant-probability}
 \Prob_{\Pi,U}\!\left[f(\widehat x_T)-f(x_\star^{(\Pi)})
 \ge c_0\frac{L\diam(S)^2}{(T+1)^2}\right]\ge \frac{3}{4}.
\end{equation}
Some fixed permutation has probability at least $\frac{3}{4}$ over the seed alone.
\end{theorem}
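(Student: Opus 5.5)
We will derive the constant-probability statement from the same two estimates \eqref{eq:proof-roadmap}, replacing Jensen's inequality by a tail bound on $\Delta_T$. Fix the seed $U=u$. Since $\Delta_t$ is nonincreasing and $\Delta_0=\binom d2$, the deficit $D_T\doteq\Delta_0-\Delta_T\ge0$ satisfies, by summing the first estimate and taking expectations over the uniform permutation, $\E[D_T]\le 6dT\le d^2/64\cdot(6T/(T+1))\le 6d^2/64$. Markov's inequality then gives $\Prob[D_T\ge 4\E D_T']\le 1/4$ for any upper bound $\E D_T'$ on the mean. Concretely, we will use $\Prob[D_T\ge 24dT]\le 1/4$. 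Because $d=64(T+1)$, we have $24dT\le 24d^2/64=3d^2/8$. Combined with $\Delta_0=d(d-1)/2\ge d^2/2-d/2$, this yields $\Delta_T\ge d^2/8-d/2\ge d^2/16$ with conditional probability at least $3/4$ (the last step uses $d\ge128$). This holds for each seed, hence also jointly over $(\Pi,U)$.

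On this event, the second estimate in \eqref{eq:proof-roadmap} gives
\[
f(\widehat x_T)-f^\star\ge\frac{\rho^2C^2}{48d^2}\cdot\frac{d^4}{256}=\frac{\rho^2C^2d^2}{12288}\ge\frac{C^2}{3072\,d^2},
\]
using $\rho\ge2/d^2$ from \cref{lem:optima}. Substituting $C^2=(3+2\sqrt2)\diam(S_\pi)^2/8$ and $d=64(T+1)$ yields a bound of the form $c'\diam(S_\Pi)^2/(T+1)^2$. Here $f^\star=f(x_\star^{(\Pi)})$ by \cref{rem:common-optimum-value}. The constant $c'$ must be compared with $c_0$, and the Markov threshold is the adjustable parameter. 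The deficit threshold needed to keep $\Delta_T\ge d^2/16$ is about $3d^2/8$, while the mean is only about $6d^2/64$. This slack of a factor of four is exactly what gives the $3/4$. If the resulting constant falls short of $c_0$ by a small factor, I would instead keep $\Delta_T\ge\kappa d^2$ with $\kappa$ optimized, or refine $\E D_T\le 6Td$ directly rather than rounding $T\le T+1$. If this still fails, the fallback is to accept a constant of the same order; the $\ge$ claim is monotone in the constant.

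The rescaling to general $L,\alpha$ follows verbatim from \cref{lem:rescaling-paper}. The rescaling maps the event $\{\text{gap}\ge\text{threshold}\}$ to the corresponding event for the scaled problem, since both the gap and $\diam^2$ transform by compatible factors ($L/\alpha^2$ and $1/\alpha^2$). The probability is therefore unchanged. For the final sentence, the joint probability is the uniform average over $\pi$ of the seed-probabilities $\Prob_U[\cdot\mid\Pi=\pi]$. A finite average that is at least $3/4$ has some term at least $3/4$, which gives a fixed permutation.

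The main obstacle is bookkeeping the constant: ensuring that the Markov threshold, the lower-order $-d/2$ term in $\Delta_0$, and $\Delta_T^2$ entering quadratically together still match the stated $c_0$, which coincides with the expectation constant $c$. Since squaring makes the loss sensitive to $\Delta_T$, I expect to need the sharper form $\E[D_T]\le 6Td$ rather than the bound $d^2/8$ used in \cref{thm:main-paper}. Measurability of the event poses no problem, since $\Delta_T$ and $\widehat x_T$ are functions of $\Tr_T$.
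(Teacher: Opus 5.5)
\paragraph{Verdict.} There is a genuine gap: as written, your argument proves the bound with constant $c_0/4$, not the stated $c_0$. The gap is repairable by tighter bookkeeping within your own approach.

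\paragraph{What matches the paper.} Your strategy is the paper's. You apply Markov's inequality to the deficit $D_T=\Delta_0-\Delta_T$ using $\E D_T\le 6Td$, then use the pointwise loss bound from \eqref{eq:proof-roadmap} on the complementary event. That bound is pointwise because the optimal value is common to all instances.

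\paragraph{Where the constant is lost.} From $\Delta_T\ge d^2/16$ you get a gap of at least $C^2/(3072d^2)=(3+2\sqrt2)\diam(S_\Pi)^2/(24576\,d^2)$. That is exactly $\tfrac{c_0}{4}\,\diam(S_\Pi)^2/(T+1)^2$. Your fallback does not close this. A bound with a smaller constant is a \emph{weaker} statement, so monotonicity lets you pass from $c_0$ down to $c_0/4$, not up. Proving it with ``a constant of the same order'' therefore does not prove the theorem as stated.

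The factor $4$ comes from two unnecessary roundings:
\begin{enumerate}
\item Replacing $24dT$ by $3d^2/8$ via $T\le T+1$ discards a margin of $24d$.
\item Weakening $d^2/8-d/2$ to $d^2/16$ halves $\Delta_T$, which costs a factor $4$ after squaring.
\end{enumerate}

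\paragraph{The repair.} Your own Markov threshold already suffices if you compute it exactly. Since $T=d/64-1$, you have $24dT=3d^2/8-24d$. On $\{D_T<24dT\}$ this gives
\[
\Delta_T>\Delta_0-24dT=\frac{d^2}{8}+\frac{47d}{2}\ge\frac{d^2}{8}.
\]
Hence the gap is at least
\[
\frac{\rho^2C^2d^2}{3072}\ge\frac{C^2}{768\,d^2}=c_0\,\frac{\diam(S_\Pi)^2}{(T+1)^2}.
\]
The paper does the same thing directly, choosing the Markov threshold $a=\Delta_0-d^2/8=d(3d-4)/8$. This gives
\[
\Prob[\Delta_T<d^2/8]\le\frac{6Td}{a}=\frac{48T}{192T+188}\le\frac14.
\]
With this correction, your rescaling argument and your finite-averaging argument for a fixed-permutation witness go through as written.
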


\begin{paperproof}{thm:constant-probability}
Let $D_T \doteq \Delta_0-\Delta_T\ge0$. \Cref{eq:proof-roadmap} gives
$\E D_T\le6Td$, also after seed averaging. For
$a\doteq\Delta_0-d^2/8=d(3d-4)/8>0$, Markov's inequality gives
\begin{equation}\label{eq:large-deficit-probability}
 \Prob[\Delta_T<d^2/8]=\Prob[D_T>a]
 \le\frac{6Td}{a}=\frac{48T}{3d-4}\le\frac14,
\end{equation}
because $3d-4=192T+188$. On the complementary event,
\eqref{eq:proof-roadmap} gives gap at least
$\rho^2C^2d^2/3072\ge c_0\diam(S_\pi)^2/(T+1)^2$.
The loss is constant on each seed/transcript class, so this is an actual
loss bound, not only a conditional expectation. Positive rescaling and
finite averaging give the stated probability and a fixed-instance witness.
\end{paperproof}

\begin{corollary}[Stopping-time tail]\label{cor:stopping-tail}
Let $N$ count calls up to the first return, with $N\doteq\infty$ on
nontermination. Every returned output must be feasible on every instance
and every seed. If, on each instance, the probability of returning an
output whose gap is at least the threshold in
\cref{thm:constant-probability} is at most $\eta< \frac{3}{4}$, then some
fixed instance satisfies $\Prob[N>T]\ge \frac{3}{4} -\eta$.
\end{corollary}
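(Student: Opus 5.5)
We derive the corollary from \cref{thm:constant-probability} by truncating the algorithm at budget $T$. Fix any randomized method $A$ whose returned outputs are feasible on every instance and seed, and let $N$ be its call count up to the first return. We build a truncated method $A_T$ that uses at most $T$ calls and returns a feasible point on every instance and seed. On each seed, $A_T$ runs $A$ verbatim. If $A$ returns after $N\le T$ calls, $A_T$ returns the same output $\widehat x$. If $A$ has not returned after $T$ calls, $A_T$ stops and outputs a fixed point $x_0$ that is feasible for every instance in the family; $x_0=0$ works, since $0\in S_\pi$ for all $\pi$ by \eqref{eq:set-family}. In the rescaled setting of \cref{lem:rescaling-paper} the same point $0$ remains feasible. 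The rule ``stop at $T$ and output $x_0$'' is a measurable function of $\Tr_T$, so $A_T$ is admissible in the model of \cref{thm:main-paper}.

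Next we apply \cref{thm:constant-probability} to $A_T$. Let $\tau$ denote the threshold $c_0L\diam(S)^2/(T+1)^2$, and let $B$ be the event that $A_T$ outputs a point with gap at least $\tau$. The theorem gives $\Prob_{\Pi,U}[B]\ge 3/4$. We split $B$ according to whether truncation occurred:
\[
 B\subseteq\{N\le T,\ \text{$A$ returns an output of gap}\ge\tau\}\cup\{N>T\}.
\]
On $\{N\le T\}$ the output of $A_T$ coincides with that of $A$, which justifies the first piece. The first event is contained in the event that $A$ ever returns an output with gap at least $\tau$. By hypothesis, that event has probability at most $\eta$ on every instance, hence also under the uniform prior over instances. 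It follows that $\Prob_{\Pi,U}[N>T]\ge 3/4-\eta$.

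Finally, $\Prob_{\Pi,U}[N>T]$ is the uniform average over the $d!$ instances of the seed-only probabilities $\Prob_U[N>T\mid\Pi=\pi]$. Some fixed $\pi$ therefore attains at least the average, which gives the fixed-instance statement.

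The one delicate point is that the event $\{N>T\}$ must be the same for $A$ and $A_T$. This holds because the two methods share transcripts up to call $T$, so the first $T$ stopping decisions agree. We also need universal feasibility of $x_0$, which is why we choose $0$ rather than an instance-dependent point. The quantifier ``on each instance'' in the hypothesis on $\eta$ is only used after averaging, so it is more than sufficient.
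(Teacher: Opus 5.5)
Your proposal is correct and follows the paper's proof: truncate at $T$ with the universally feasible output $0$, then contain the truncated failure event in the union of the original returned-failure event and $\{N>T\}$. You then apply \cref{thm:constant-probability} and average over instances to obtain the fixed instance, while spelling out details the paper leaves implicit: feasibility of $0$ after rescaling, measurability of the truncation rule, and agreement of the event $\{N>T\}$ for $A$ and its truncation.
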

\begin{proof}
Truncate at $T$, returning the feasible point $0$ if the method has not
stopped. The truncated failure event is contained in the union of the
original returned-failure event and $\{N>T\}$. Apply
\cref{thm:constant-probability}; nontermination already lies in the tail.
\end{proof}

The fixed permuation witnesses for the expectation in \cref{thm:main-paper} and the probability in \cref{thm:constant-probability} may differ; neither
depends on the realized seed though. The diameter in our bound is that of the constructed instance. In this
family, diameter and set curvature satisfy $\alpha\diam(S)=\Theta(1/d)$,
so rescaling cannot adjust them independently. We therefore do not claim
optimal dependence on all geometric parameters simultaneously.

\section{Exact LMO transcripts under a uniform permutation}
\label{sec:exact-transcripts}

For the considered family of instances, the exact thresholding formula captures all information revealed in an LMO answer, including the inequalities implied by its zero coordinates. With this we can describe the entire conditional distribution directly.

\subsection{The exact oracle answer}

Recall the convention $\LMO_{S_\pi}(0)=0$. A zero query reveals no
instance-dependent information.

We recall the exact LMO formula of
\cite[Sections~2.2 and~3.1]{grimmer2026lower} and derive an explicit
characterization of the information contained in its answer.

\begin{lemma}[Exact LMO and the information in its answer]
\label{lem:exact-lmo}
Fix $p\ne0$ and let $z\doteq\LMO_{S_\pi}(p)$. There is a unique $\lambda>0$ such
that
\begin{equation}\label{eq:lmo}
 z_i=-\operatorname{sign}(p_i)
 \max\left\{0,\frac{|p_i|}{\lambda}-w_{\pi(i)}\right\}.
\end{equation}
Define the \emph{threshold parameter} $\tau$ and \emph{active set} $A$ by
\begin{equation}\label{eq:tau}
  \tau \doteq \frac1{2C^2\lambda^2},
  \qquad A \doteq \{i:\tau p_i^2>\pi(i)\}.
\end{equation}
Coordinates in $A$ are \emph{active}; the others are \emph{inactive}. Then
$A=\{i:z_i\ne0\}$ and
\begin{equation}\label{eq:water-level}
  \sum_{i=1}^d(\tau p_i^2-\pi(i))_+=1.
\end{equation}
The pair $(p,z)$ determines $\lambda$, $\tau$, every active rank $\pi(i)$,
and the inactive lower bounds
\begin{equation}\label{eq:lower-bound-answer}
  \pi(i)\ge\lceil\tau p_i^2\rceil
  \qquad(i\notin A).
\end{equation}
Conversely, the active-rank equalities and inactive-rank lower bounds determine
the LMO answer.
\end{lemma}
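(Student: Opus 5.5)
The plan is to derive the formula from the KKT conditions of the LMO, viewed as a separable convex program, and then read off the information content from the explicit solution. For $p\ne0$, minimize $\langle p,z\rangle$ subject to $g(z)\doteq\frac12\|z\|_2^2+\sum_i w_{\pi(i)}|z_i|\le C^2$. The set $S_\pi$ has nonempty interior (it contains $0$ in its interior, since $g(0)=0<C^2$), so Slater's condition holds. The minimizer lies on the boundary, because $p\ne0$ and the objective is linear. By strong convexity of $g$, hence strict convexity of $S_\pi$ (\cref{lem:geometry}), the minimizer is unique. KKT gives a multiplier $\lambda>0$ with $0\in p+\lambda\partial g(z)$, that is, $0\in p_i+\lambda(z_i+w_{\pi(i)}\partial|z_i|)$ coordinatewise. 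Solving this one-dimensional subdifferential inclusion is the usual soft-thresholding computation, and it yields \eqref{eq:lmo}.

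Uniqueness of $\lambda$ follows from the active constraint $g(z(\lambda))=C^2$. Each coordinate contributes $\frac12(|p_i|/\lambda-w_{\pi(i)})_+^2+w_{\pi(i)}(|p_i|/\lambda-w_{\pi(i)})_+=\frac12\bigl((p_i^2/\lambda^2)-w_{\pi(i)}^2\bigr)_+$, since $\frac12 a^2+wa=\frac12((a+w)^2-w^2)$ with $a+w=|p_i|/\lambda$. This expression is continuous and strictly decreasing in $\lambda$ wherever it is positive, tends to $0$ as $\lambda\to\infty$ and to $\infty$ as $\lambda\to0$, so $\lambda$ is unique. Substituting $w_k^2=2C^2k$ and dividing by $C^2$ turns the constraint into $\sum_i(p_i^2/(2C^2\lambda^2)-\pi(i))_+=1$, which is \eqref{eq:water-level} with $\tau$ as in \eqref{eq:tau}. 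The equivalence $z_i\ne0\iff|p_i|/\lambda>w_{\pi(i)}\iff \tau p_i^2>\pi(i)$ gives $A=\{i:z_i\ne0\}$.

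For the information statement, the key step is recovering $\lambda$ from $(p,z)$ alone. If $A\ne\emptyset$, we cannot divide coordinatewise, because $w_{\pi(i)}$ is unknown. Instead, note that \eqref{eq:water-level} forces $A\ne\emptyset$, since the sum equals $1$. By the KKT condition, on the boundary $\nabla g(z)$ is parallel to $-p$; equivalently, $\langle p,z\rangle=-\lambda\langle \xi,z\rangle$ with $\xi\in\partial g(z)$, and $\langle\xi,z\rangle=\|z\|^2+\sum w_{\pi(i)}|z_i|=C^2+\frac12\|z\|^2$ using $g(z)=C^2$. Hence
\[
\lambda=-\frac{\langle p,z\rangle}{C^2+\tfrac12\|z\|_2^2},
\]
which is computable from $(p,z)$. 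This gives $\tau$. For each $i\in A$, $w_{\pi(i)}=|p_i|/\lambda-|z_i|$, so $\pi(i)=w_{\pi(i)}^2/(2C^2)$ is determined. For $i\notin A$, $\pi(i)\ge\tau p_i^2$, and integrality gives \eqref{eq:lower-bound-answer}.

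For the converse, suppose $\pi'$ agrees with $\pi$ on the ranks of $A$ and satisfies $\pi'(i)\ge\lceil\tau p_i^2\rceil\ge\tau p_i^2$ off $A$. With the same $\lambda$, the vector $z$ then satisfies the thresholding formula \eqref{eq:lmo} for $\pi'$: inactive coordinates are zero because $|p_i|/\lambda\le w_{\pi'(i)}$, and active coordinates are unchanged. The water-level equation also holds for $\pi'$, because the sum only involves $A$. So $z$ satisfies the KKT conditions for $S_{\pi'}$, and by uniqueness $z=\LMO_{S_{\pi'}}(p)$.

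The main obstacle is the recovery of $\lambda$ without knowledge of $\pi$. I expect the identity above to settle it. One thing to watch is the boundary case $\tau p_i^2=\pi(i)$ exactly; this is why $A$ is defined with a strict inequality, and the ceiling still gives a valid bound in that case.
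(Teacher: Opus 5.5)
Your proposal is correct and follows essentially the same route as the paper. The paper cites the thresholding formula and the uniqueness of the multiplier from \citet{grimmer2026lower}, whereas you rederive them via Slater, KKT and soft-thresholding. Otherwise the two arguments coincide: the per-coordinate identity $\tfrac12(u_i^2-w_{\pi(i)}^2)$ yields the water-level equation, and $\lambda=\sum_i|p_iz_i|/(C^2+\tfrac12\|z\|_2^2)$ recovers the multiplier. Your identity $\langle\xi,z\rangle=C^2+\tfrac12\|z\|_2^2$ amounts to substituting $w_{\pi(i)}=|p_i|/\lambda-|z_i|$ into the active constraint. The converse is also the same: the fixed threshold keeps inactive coordinates inactive and reproduces $z$ for every compatible $\pi'$.
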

We can recover $\lambda$ and $\pi$ explicitly from the observed vectors:
\begin{equation}\label{eq:observed-decoder}
 \lambda=\frac{\sum_i|p_i z_i|}{C^2+\|z\|_2^2/2},\qquad
 \pi(i)=\frac{(|p_i|/\lambda-|z_i|)^2}{2C^2}\quad(z_i\ne0).
\end{equation}
The coordinate label $i$ is already known; its assigned weight rank is
learned. Every nonzero component supplies an equality, not just one per
query; some equalities may repeat previously revealed ranks.
\begin{paperproof}{lem:exact-lmo}
The thresholding formula and uniqueness of its multiplier are
the exact-LMO lemma from \citet[Section 2.2]{grimmer2026lower},
applied after a coordinate permutation. We prove the information statements.
For $z_i\ne0$, put $u_i\doteq|p_i|/\lambda$. The coordinate's constraint
contribution is
\[
 \frac12(u_i-w_{\pi(i)})^2
 +w_{\pi(i)}(u_i-w_{\pi(i)})
 =\frac12\bigl(u_i^2-w_{\pi(i)}^2\bigr).
\]
Using $w_k^2=2C^2k$ and dividing the boundary equation by $C^2$ gives
\eqref{eq:water-level}. Activity is equivalent to
$\tau p_i^2>\pi(i)$. The left side of \eqref{eq:water-level} is continuous,
strictly increasing once positive, and ranges from zero to infinity, so it
also determines $\tau$ uniquely.

On active coordinates, \eqref{eq:lmo} gives
$w_{\pi(i)}=|p_i|/\lambda-|z_i|$. Substitution into the constraint yields
\[
 C^2
 =\frac1\lambda\sum_i|p_i z_i|-\frac12\|z\|^2,
 \qquad
 \lambda=\frac{\sum_i|p_i z_i|}{C^2+\tfrac12\|z\|^2}.
\]
In particular, the observed vectors alone give
\[
 \pi(i)=\frac{(|p_i|/\lambda-|z_i|)^2}{2C^2}\quad(z_i\ne0).
\]
Here $i$ is an already known coordinate label; the learned quantity is the
weight rank assigned to it which induces one equality condition for each nonzero component. Some may repeat earlier revealed ranks. For an inactive coordinate, stationarity gives
$|p_i|\le\lambda w_{\pi(i)}$, equivalently
$\tau p_i^2\le\pi(i)$ and \eqref{eq:lower-bound-answer}.

Conversely, with the active set specified, \eqref{eq:water-level} gives
\begin{equation}\label{eq:tau-from-active}
  \tau=\frac{1+\sum_{i\in A}\pi(i)}{\sum_{i\in A}p_i^2}.
\end{equation}
The active ranks and inactive inequalities reproduce this $\tau$, the same
activity pattern, and the vector in \eqref{eq:lmo}.
\end{paperproof}

\subsection{Conditional distributions, Remaining ranks, and compatible assignments}

Let $\Pi$ be uniform over all $d!$ permutations, selected before the method
starts. Queries may depend on the complete previous transcript.

\begin{lemma}[Form of every conditional posterior]\label{lem:posterior-form}
Fix a seed value $U=u$, thereby fixing a deterministic method, and a
transcript $\Tr_t$ of positive probability under the uniform instance prior
for that fixed method. Conditional on this transcript, $\Pi$ is uniform over
all permutations satisfying:
\begin{enumerate}[label=(\roman*),leftmargin=2em]
  \item exact assignments $\Pi(i)=k$ for coordinates active in an earlier
        LMO answer;
  \item lower bounds $\Pi(i)\ge b_i$ from coordinates inactive in earlier
        answers.
\end{enumerate}
There are no additional restrictions in the numerical LMO answers.
\end{lemma}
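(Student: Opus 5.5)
With the seed $U=u$ fixed, the method is deterministic. The plan is to show that the set of permutations producing the transcript $\Tr_t$ is exactly the set $\mathcal C_t$ cut out by constraints (i) and (ii). Since the prior is uniform and the transcript is a deterministic function of $\Pi$, the posterior is then uniform on $\mathcal C_t$ by Bayes' rule. The argument is an induction on $t$, establishing a characterization of the preimage $\{\pi:\Tr_t(\pi)=\tau_t\}$ for each realizable transcript value $\tau_t$.

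\textbf{Base case and induction hypothesis.} For $t=0$ the transcript is just $u$, and the preimage is all of $S_d$. Suppose the preimage of $\tau_t$ equals the set $\mathcal C_t$ of permutations satisfying the equalities and lower bounds accumulated from the first $t$ answers. On this preimage, the next query $(x_t,p_t)$ is a fixed function of $\tau_t$ and is therefore the same for every compatible $\pi$. The answer $y_t=(f(x_t),\nabla f(x_t),z_{t+1})$ has its first two components independent of $\pi$, because the objective in \eqref{eq:objective} is common to all instances. So only $z_{t+1}=\LMO_{S_\pi}(p_t)$ can split the preimage. If $p_t=0$, the convention gives $z_{t+1}=0$ for all $\pi$, and $\mathcal C_{t+1}=\mathcal C_t$.

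\textbf{Inductive step for $p_t\ne0$.} Here I would apply both directions of \cref{lem:exact-lmo}. Forward: for any $\pi\in\mathcal C_t$ with $\LMO_{S_\pi}(p_t)=z$, the pair $(p_t,z)$ determines $\tau$ and the active set $A=\{i:z_i\ne0\}$. It forces $\pi(i)$ equal to the decoded rank \eqref{eq:observed-decoder} for $i\in A$, and $\pi(i)\ge\lceil\tau p_i^2\rceil$ for $i\notin A$. Converse: for any $\pi'\in\mathcal C_t$ satisfying these same equalities and bounds, \eqref{eq:tau-from-active} gives the same $\tau$. The strict inequalities $\tau p_i^2>\pi'(i)$ on $A$ hold because the ranks coincide with those of $\pi$. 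The non-strict inequalities off $A$ hold by the lower bounds, using that ranks are integers, so $\pi'(i)\ge\lceil\tau p_i^2\rceil\ge\tau p_i^2$. Thus the activity pattern agrees, and \eqref{eq:lmo} reproduces the same $z$. The new preimage is therefore $\mathcal C_t$ intersected with these new constraints, which again have the form (i)--(ii). This closes the induction.

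\textbf{Main obstacle.} The delicate point is the converse direction, namely that the numerical values of $z$ (and of $\lambda$) carry no information beyond the integer equalities and lower bounds. This requires that $\lambda$, equivalently $\tau$, be a function of the active ranks alone, which is exactly the content of \eqref{eq:tau-from-active}. It also requires the boundary case $\tau p_i^2=\pi(i)$ to be classified as inactive, which matches the strict inequality in the definition of $A$. A further point is that the permutation constraint (that $\pi$ be a bijection) is implicit in the phrase ``permutations satisfying'' and adds nothing new. Once the preimage is shown to be an intersection of such constraint sets, uniformity follows immediately, since each compatible $\pi$ has prior mass $1/d!$ and produces the same transcript.
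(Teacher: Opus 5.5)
Your proposal is correct and follows the paper's proof essentially step for step: induction on $t$, the query $(x_t,p_t)$ being fixed by the seed and the prior transcript, the common objective making value and gradient uninformative, both directions of \cref{lem:exact-lmo} converting the LMO answer into exactly the new equalities and lower bounds, and uniformity from restricting the uniform prior to the preimage. Your converse direction spells out the last sentence of the proof of \cref{lem:exact-lmo}. To close it fully, note that the candidate $\tau$ from \eqref{eq:tau-from-active} satisfies \eqref{eq:water-level} for $\pi'$, so by uniqueness of that root it is $\pi'$'s threshold.
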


\begin{paperproof}{lem:posterior-form}
At time zero there are no constraints. Suppose the statement holds before a
query. For fixed $U$ and $\Tr_t$, the pair $(x_t,p_t)$ is fixed and adds no
condition on $\Pi$. The value and gradient of the common objective
\eqref{eq:objective} are independent of $\Pi$. By
\cref{lem:exact-lmo}, the LMO answer is equivalent exactly to the active
rank equalities and inactive lower bounds. Intersecting them with the previous
constraints proves the description by induction. For the fixed seed, the
transcript is a deterministic function of $\Pi$. Conditioning on it therefore
restricts the uniform prior to the compatible permutations, preserving their
equal weights.
\end{paperproof}

Fix the seed and a realized transcript. Let $K_t$ be the coordinates whose
ranks appeared in nonzero LMO components. From the query and its answer,
every nonzero component $z_i$ reveals $w_{\Pi(i)}$, and hence $\Pi(i)$.
Let $I_t \doteq [d]\setminus K_t$.
Write $m \doteq |I_t|$ and list the unused \emph{original weight ranks} as
$r_1<\cdots<r_m$. Here $j$ denotes a position in the remaining list, while
$r_j$ denotes the corresponding original rank. The remaining weights are
therefore $w_{r_1},\ldots,w_{r_m}$. This lets us work with a shorter index set
without changing the original ranks or weights.
For $v\in I_t$, let $b_v$ be its largest observed lower
bound (initially $1$). The first remaining rank at least $b_v$ has index $L_v\doteq\min\{j\in[m]:r_j\ge b_v\}$. Realizability ensures the minimum exists. Let $\iota:[m]\to I_t$ list coordinates in increasing order of $(L_v,v)$, so ties retain the original
coordinate order, and put $\ell_i\doteq L_{\iota(i)}$.
A \emph{reduced permutation} $\sigma$ means precisely $\Pi(\iota(i))=r_{\sigma(i)}$.
Thus representing the permutations as permutation matrices, $v\doteq\iota(i)$ is a \emph{coordinate} (not be be confused with a list index $i$ here), $i$ its row number,
$j\doteq\sigma(i)$ a column number, and $r_j$ the original integer weight rank.
Query components below are relabeled by the same map:
$p_i$ in a reduced formula means the full query component $p_{\iota(i)}$. See \cref{app:assignment-example} for an example.

Conditional on the full transcript, including the seed, the induced reduced
permutation $\Sigma$ is uniformly distributed over the compatible assignments
\begin{equation}\label{eq:nested-family}
 \mathcal F(\ell)\doteq\{\sigma\in\mathfrak S_m:
                  \sigma(i)\ge\ell_i\ \text{for all }i\}.
\end{equation}
A permutation is represented by one occupied cell in each row and column.
Row $i$ permits columns $\ell_i,\ldots,m$; these allowed sets are nested as the $\ell_i$ are increasing and form a Ferrers board. With $\ell_1\le\cdots\le\ell_m$, nonemptiness is equivalent to $\ell_i\le i$: if $\ell_i>i$, rows $i,\ldots,m$ have too few columns; otherwise assign from row $m$ down to row $1$. Set $c_i\doteq i-\ell_i+1$ then we have

\begin{lemma}[Counting compatible assignments]\label{lem:count}
For $\mathcal F(\ell) \neq \emptyset$ we have
\begin{equation}\label{eq:count}
 |\mathcal F(\ell)|=\prod_{i=1}^m c_i.
\end{equation}
\end{lemma}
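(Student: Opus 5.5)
The plan is the standard Ferrers-board count, assigning rows from the most constrained to the least constrained. Since $\ell_1\le\cdots\le\ell_m$, the allowed column sets $\{\ell_i,\ldots,m\}$ are nested and decrease as $i$ grows: row $m$ has the fewest options and row $1$ the most. I will build $\sigma\in\mathcal F(\ell)$ by choosing $\sigma(m),\sigma(m-1),\ldots,\sigma(1)$ in this order. The claim is that when row $i$ is reached, the number of admissible columns is exactly $c_i=i-\ell_i+1$, regardless of the earlier choices. The product formula \eqref{eq:count} then follows by the multiplication principle.

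The key step is the following count. Rows $i+1,\ldots,m$ have already been assigned $m-i$ distinct columns. Each of these columns lies in $\{\ell_{i'},\ldots,m\}$ for some $i'>i$. Because $\ell_{i'}\ge\ell_i$, each also lies in $\{\ell_i,\ldots,m\}$, the allowed set of row $i$. So row $i$ has $(m-\ell_i+1)-(m-i)=i-\ell_i+1=c_i$ free allowed columns. This number does not depend on which particular columns were used, which is what the multiplication principle requires.

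To finish, I will check two things.
\begin{enumerate}[label=(\roman*),leftmargin=2em]
\item Every sequence of such choices yields a permutation in $\mathcal F(\ell)$. The chosen columns are distinct by construction, and each satisfies $\sigma(i)\ge\ell_i$.
\item Every $\sigma\in\mathcal F(\ell)$ arises from exactly one sequence of choices, namely its own values read from row $m$ down to row $1$.
\end{enumerate}
Hence $|\mathcal F(\ell)|=\prod_i c_i$. The hypothesis $\mathcal F(\ell)\ne\emptyset$ enters only through $\ell_i\le i$ (shown before the lemma), which makes each $c_i\ge1$. Without it the formula could produce a misleading product involving nonpositive factors.

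The only potential obstacle is the containment in the key step. It must hold for all earlier choices, not only for some, and this is exactly where the nestedness of the allowed sets is used. It would fail if the rows were processed in increasing order of $i$. Ties $\ell_i=\ell_{i+1}$ need no separate treatment, because the containment uses only $\ell_{i'}\ge\ell_i$.
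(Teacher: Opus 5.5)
Your proposal is correct and follows essentially the same route as the paper: fill rows $m,m-1,\ldots,1$, use nestedness ($\ell_{i'}\ge\ell_i$ for $i'>i$) to see that the $m-i$ already-used columns all lie in $\{\ell_i,\ldots,m\}$, so row $i$ always has exactly $c_i$ choices, and conclude by the multiplication principle. Your explicit bijection check and your remark that nonemptiness (via $\ell_i\le i$) guarantees $c_i\ge1$ are slightly more detailed than the paper's proof but do not change the argument.
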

\begin{paperproof}{lem:count}
Fill rows $m,m-1,\ldots,1$. The $m-i$ already assigned rows have lower
bounds at least $\ell_i$, so they remove exactly $m-i$ columns from
$\ell_i,\ldots,m$. Row $i$ therefore has $c_i$ available columns, whatever
the preceding choices. Multiplication proves \eqref{eq:count}. Choosing
uniformly among the currently available allowed columns gives every
compatible assignment probability $\prod_i c_i^{-1}=|\mathcal F(\ell)|^{-1}$.
\end{paperproof}

\section{Conditional variance and optimization error}
\label{sec:posterior-error}
The deterministic proof in \cite[Section 3.3]{grimmer2026lower} averages over an unconstrained suffix of unassigned ranks and obtains a lower bound on loss via their optimizer variance. We use the same variance-to-loss argument, however our averaging has to be done more carefully due to the row-dependent restrictions from \eqref{eq:nested-family}. We define the following estimate to account for them
\begin{equation}\label{eq:deficit-paper}
 \Delta(\ell) \doteq \sum_{i=1}^m(c_i-1)
             =\frac{m(m+1)}2-\sum_{i=1}^m\ell_i.
\end{equation}
This is $m(m-1)/2$ for the unrestricted permutation and $0$ for a unique
assignment, including the empty assignment.

\begin{lemma}[Rank variance under nested lower bounds]\label{lem:variance}
Let $m\ge1$ and $\Sigma$ be uniform on $\mathcal F(\ell)$. Then
\begin{align}
 \sum_i\operatorname{Var}[\Sigma(i)] \geq \frac{\Delta^2}{12m}
 \qquad \text{and} \qquad
\sum_i\operatorname{Var}[\sqrt{r_{\Sigma(i)}}] \geq \frac{\Delta^2}{48md}.
 \label{eq:sqrt-rank-variance}
\end{align}
\end{lemma}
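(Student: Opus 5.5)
The plan is to use the sequential sampling description from the proof of \cref{lem:count}, apply the law of total variance row by row, and finish with Cauchy--Schwarz.

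\textbf{Step 1: sequential sampling.} Fill rows $m,m-1,\ldots,1$ in turn, choosing each row uniformly among its currently available allowed columns. The proof of \cref{lem:count} shows this produces every $\sigma\in\mathcal F(\ell)$ with probability $|\mathcal F(\ell)|^{-1}$. So it is a valid description of $\Sigma$ uniform on $\mathcal F(\ell)$.

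Let $\mathcal G_i$ be the $\sigma$-algebra generated by $\Sigma(i+1),\ldots,\Sigma(m)$. By this description, conditional on $\mathcal G_i$, the value $\Sigma(i)$ is uniform on a random set $J_i\subseteq\{\ell_i,\ldots,m\}$ of exactly $c_i$ distinct columns. I expect the main point of care to be right here: the set $J_i$ depends on the earlier choices, but its cardinality $c_i$ does not. That is exactly what \cref{lem:count} provides.

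\textbf{Step 2: conditional variance of $\Sigma(i)$.} By the law of total variance,
\[
\operatorname{Var}[\Sigma(i)]\ge\E\bigl[\operatorname{Var}[\Sigma(i)\mid\mathcal G_i]\bigr].
\]
I will use the elementary fact that a uniform variable on $c$ reals $x_1<\cdots<x_c$ with $x_k-x_l\ge\delta(k-l)$ for $k>l$ has variance at least $\delta^2(c^2-1)/12$. To see it, write the variance as $\frac1{2c^2}\sum_{k,l}(x_k-x_l)^2$ and compare termwise with the case $x_k=k$, whose variance is $(c^2-1)/12$.

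Distinct integers satisfy the spacing condition with $\delta=1$. Hence $\operatorname{Var}[\Sigma(i)\mid\mathcal G_i]\ge(c_i^2-1)/12$ for every realization. Since $c_i\ge1$, we have $c_i^2-1\ge(c_i-1)^2$. Summing over $i$ and applying Cauchy--Schwarz,
\[
\sum_i(c_i-1)^2\ge\frac1m\Bigl(\sum_i(c_i-1)\Bigr)^2=\frac{\Delta^2}{m},
\]
which gives the first bound in \eqref{eq:sqrt-rank-variance}.

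\textbf{Step 3: the square-root ranks.} Conditional on $\mathcal G_i$, the quantity $\sqrt{r_{\Sigma(i)}}$ is uniform on the $c_i$ values $\{\sqrt{r_j}:j\in J_i\}$. The original ranks $r_j$ are distinct integers in $[1,d]$. For integers $1\le a<b\le d$,
\[
\sqrt b-\sqrt a=\frac{b-a}{\sqrt a+\sqrt b}\ge\frac{b-a}{2\sqrt d}.
\]
Ordering the elements of $J_i$, consecutive $r$-values differ by at least $1$. Their square roots therefore satisfy the spacing condition with $\delta=1/(2\sqrt d)$.

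The same elementary fact then gives $\operatorname{Var}[\sqrt{r_{\Sigma(i)}}\mid\mathcal G_i]\ge(c_i^2-1)/(48d)$. The law of total variance and the same Cauchy--Schwarz step yield the second bound $\Delta^2/(48md)$.

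Beyond the uniform-conditional description in Step 1, the remaining steps are routine.
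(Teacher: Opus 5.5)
Your proof is correct and follows essentially the same route as the paper: you fill rows from \cref{lem:count}, use conditional uniformity on $c_i$ distinct integers and the pairwise-variance spacing comparison giving $(c_i^2-1)/12$, then total variance and Cauchy--Schwarz. For the square-root bound the paper combines $|\sqrt{r_a}-\sqrt{r_b}|\ge|a-b|/(2\sqrt d)$ with the independent-copy identity $\operatorname{Var}(X)=\tfrac12\E(X-X')^2$, which is the same comparison you apply conditionally with spacing $\delta=1/(2\sqrt d)$.
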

\begin{paperproof}{lem:variance}
Sample $\Sigma$ by filling rows $m,m-1,\ldots,1$, choosing uniformly among
the available allowed columns, as in the proof of \cref{lem:count}.
Given the assignments to rows $i+1,\ldots,m$, $\Sigma(i)$ is uniform on
$c_i$ distinct column \emph{indices}, hence on $c_i$ distinct integers. If these are $a_1<\cdots<a_{c_i}$, then
$a_v-a_u\ge v-u$; the pairwise variance formula shows their variance is
at least that of $1,\ldots,c_i$, namely $(c_i^2-1)/12$.
Total variance and Cauchy--Schwarz give
\[
 \sum_i\operatorname{Var}[\Sigma(i)]
 \ge\frac1{12}\sum_i(c_i^2-1)
 \ge\frac1{12}\sum_i(c_i-1)^2\ge\frac{\Delta^2}{12m}.
\]
Original ranks satisfy $|r_a-r_b|\ge|a-b|$. The square-root comparison in
\cite[Section~3.3, proof of Theorem~1.1]{grimmer2026lower} therefore gives
$|\sqrt{r_a}-\sqrt{r_b}|\ge|a-b|/(2\sqrt d)$.
Apply this to independent copies and use
$\operatorname{Var}(X)=\tfrac12\E(X-X')^2$.
The conditional row argument above handles the nested lower bounds in
$\mathcal F(\ell)$.
\end{paperproof}

\begin{corollary}[Conditional optimization error]\label{cor:conditional-error}
After fixing the seed, at a final transcript $h$ with $m>0$, any fixed vector $x$ feasible
on every compatible instance satisfies
\begin{equation}\label{eq:conditional-error}
 \E[f(x)-f(x_\star^{(\Pi)})\mid h]
 \ge\frac{\rho^2C^2\Delta^2}{48md}.
\end{equation}
\end{corollary}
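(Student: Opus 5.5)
Since $f$ is $1$-strongly convex and $x_\star^{(\pi)}$ minimizes $f$ over $S_\pi$, first-order optimality gives, for every compatible $\pi$ and every $x$ feasible on $S_\pi$,
\[
 f(x)-f(x_\star^{(\pi)})\ge\langle\nabla f(x_\star^{(\pi)}),x-x_\star^{(\pi)}\rangle+\tfrac12\|x-x_\star^{(\pi)}\|_2^2\ge\tfrac12\|x-x_\star^{(\pi)}\|_2^2 .
\]
The inner product is nonnegative because $x\in S_\pi$. Since $x$ is fixed given $h$ and feasible on every compatible instance, we can take the conditional expectation over the uniform posterior of \cref{lem:posterior-form}. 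This gives $\E[f(x)-f(x_\star^{(\Pi)})\mid h]\ge\frac12\E[\|x-x_\star^{(\Pi)}\|^2\mid h]$.

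Next, the bias--variance inequality applied coordinatewise yields $\E\|x-X\|^2\ge\sum_v\operatorname{Var}[X_v]$ for the random optimizer $X=x_\star^{(\Pi)}$, since a fixed $x$ can do no better than the conditional mean. By \cref{lem:optima}, $X_v=\rho(w_d-w_{\Pi(v)})=\rho w_d-\rho C\sqrt2\sqrt{\Pi(v)}$. So $\operatorname{Var}[X_v]=2\rho^2C^2\operatorname{Var}[\sqrt{\Pi(v)}]$. Coordinates in $K_t$ have deterministic ranks given $h$ and contribute zero. For $v=\iota(i)\in I_t$ we have $\Pi(v)=r_{\Sigma(i)}$, where $\Sigma$ is uniform on $\mathcal F(\ell)$ by the discussion preceding \cref{lem:count}. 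Therefore
\[
 \sum_v\operatorname{Var}[X_v]=2\rho^2C^2\sum_{i=1}^m\operatorname{Var}[\sqrt{r_{\Sigma(i)}}].
\]

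Finally, apply the second inequality of \cref{lem:variance} (valid since $m\ge1$), which gives $\sum_i\operatorname{Var}[\sqrt{r_{\Sigma(i)}}]\ge\Delta^2/(48md)$. Combining the factors $\tfrac12\cdot2\rho^2C^2$ yields exactly $\rho^2C^2\Delta^2/(48md)$.

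The only delicate point is justifying the first-order inequality for a point $x$ that is merely feasible, not optimal. This follows from the variational inequality at the constrained minimizer on each compatible $S_\pi$, and it is exactly where universal feasibility of the output is used. Beyond that, one must check that the reduced relabeling $\iota$ correctly transfers the posterior of $\Pi$ on $I_t$ to the uniform law of $\Sigma$. The remaining steps are bookkeeping.
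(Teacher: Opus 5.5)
Your proposal is correct and follows essentially the same route as the paper. Both arguments use the first-order optimality bound $f(x)-f(x_\star^{(\pi)})\ge\frac12\|x-x_\star^{(\pi)}\|_2^2$ on each compatible instance, then the conditional bias--variance decomposition with the bias term dropped, then $\operatorname{Var}[X_{\star,v}]=2\rho^2C^2\operatorname{Var}[\sqrt{\Pi(v)}]$ with zero contribution from resolved coordinates, and finally the second bound of \cref{lem:variance}, with the constants $\tfrac12\cdot2\rho^2C^2$ combining exactly as you state.
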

\begin{proof}
Fix the final transcript $h$ and a point $x$ feasible for every instance
consistent with $h$. Although $x$ is fixed, the optimizer
$X_\star\doteq x_\star^{(\Pi)}$ may differ between those instances. On each compatible instance,
feasibility and constrained first-order optimality give
$f(x)-f(X_\star)\ge\|x-X_\star\|_2^2/2$. Moreover,
\[
 \E[\|x-X_\star\|_2^2\mid h]
 =\|x-\E[X_\star\mid h]\|_2^2
   +\sum_v\operatorname{Var}[X_{\star,v}\mid h].
\]
The first term is nonnegative. Resolved coordinates have zero variance;
by \eqref{eq:optimum} the remaining sum is
$2\rho^2C^2\sum_i\operatorname{Var}[\sqrt{r_{\Sigma(i)}}]$.
Apply \eqref{eq:sqrt-rank-variance} and the factor $1/2$ above.
\end{proof}

\section{Expected decrease in one query}
\label{sec:one-query-progress}

We bound the expected decrease of $\Delta$ in one LMO query by counting
pairs of compatible assignments that differ by a single swap and produce
different answers.

Fix the transcript and the next LMO query direction $p\in\R^d$;
the oracle minimizes $\langle p,z\rangle$ over $S_\Pi$.
If $m=0$ or $p=0$, $\Delta$ is unchanged; assume $m\ge1$ and $p\ne0$.
The unresolved entries of this full vector may still all be zero. We build a graph as follows. Each vertex is a compatible assignment $\sigma\in\mathcal F(\ell)$, where $\ell$ is the bound vector \emph{before} the new query's information is incorporated; in the following we use a superscript $+$ to denote quantities after the query, in particular, $\Delta = \Delta(\ell)$ and $\Delta^+ \doteq \Delta(\ell^+)$. For a row $i$ with column $s\doteq\sigma(i)$, choose
$a\in\{\ell_i,\ldots,s-1\}$ and put $k\doteq\sigma^{-1}(a)$.
Swap these columns: $\sigma'(i)\doteq a$, $\sigma'(k)\doteq s$, leaving
other rows unchanged. The swap is allowed: row $i$ accepts $a$, and row $k$
receives a larger column index. It exchanges original ranks $r_a,r_s$
between the fixed physical coordinates $\iota(i),\iota(k)$.
Each swap gives a directed edge $\sigma\to\sigma'$; both orientations
are counted. Every vertex has the same number of outgoing edges:
\[
  \sum_i(\sigma(i)-\ell_i)
  =\frac{m(m+1)}2-\sum_i\ell_i
  =\Delta.
\]

An \emph{answer class} consists of the assignments producing the same
numerical LMO answer. Applying the earlier indexing construction to the
transcript extended by this answer represents the class as
$\mathcal F(\ell^+)$.

A swap stays within an answer class exactly when it preserves its fixed
assignments and new bounds, hence when it remains an allowed swap of the updated family.
Each vertex has $\Delta^+\le\Delta$ outgoing edges within its class and
$\Delta-\Delta^+$ leaving it. Since $\Sigma$ is uniform on $\mathcal F(\ell)$,
\begin{equation}\label{eq:edge-boundary-paper}
  \E[\Delta-\Delta^+]
  =\frac{\#\{\text{directed swap edges whose endpoints give different answers}\}}
         {|\mathcal F(\ell)|}.
\end{equation}
We provide an example in \cref{app:swap-example}.

To bound this count, we use the threshold equation. The disclosed coordinates
contribute
\[
  \psi(\tau) \doteq \sum_{v\in K_t}(\tau p_v^2-\Pi(v))_+.
\]
In the formulas for the unresolved row (which gives the bounds) we use $p_i$ as an abbreviation for the full component $p_{\iota(i)}$; alternatively think of it as relabeling so that we can apply the same argument on a smaller set. For $\sigma\in\mathcal F(\ell)$, the threshold $\tau_\sigma$
is the unique solution to
\begin{equation}\label{eq:conditional-water-level}
  \psi(\tau)+
  \sum_{i=1}^m(\tau p_i^2-r_{\sigma(i)})_+=1.
\end{equation}
The function $\psi$ is known, nonnegative, nondecreasing and independent of
$\sigma$. Below $\tau_\sigma$, the left side and every summand are below $1$.
A swap exchanges two ranks and we first count swaps whose larger rank has a nonzero LMO component.

\begin{lemma}[Active larger ranks]\label{lem:active-down}
For the fixed query $p$ and a compatible hidden assignment
$\sigma\in\mathcal F(\ell)$, define
\[
  A_\sigma\doteq\{i\in[m]:\tau_\sigma p_i^2>r_{\sigma(i)}\}.
\]
These are the pre-query unresolved rows whose ranks are disclosed by nonzero
components of the new LMO answer. With $\Sigma$ uniform on $\mathcal F(\ell)$,
$A_\Sigma$ denotes the corresponding random set, and
\begin{equation}\label{eq:active-down}
  \E\sum_{i\in A_\Sigma}(\Sigma(i)-\ell_i)\le m.
\end{equation}
\end{lemma}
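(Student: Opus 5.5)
The plan is to turn \eqref{eq:active-down} into a counting statement about the same swap graph used for \eqref{eq:edge-boundary-paper}, and then bound the count by charging each swap to its endpoint. Since $\Sigma$ is uniform on $\mathcal F(\ell)$, the left side of \eqref{eq:active-down} equals $|\mathcal F(\ell)|^{-1}$ times the number of triples $(\sigma,i,a)$ with $\sigma\in\mathcal F(\ell)$, $i\in A_\sigma$ and $\ell_i\le a<\sigma(i)$. Each such triple is exactly one downward swap edge $\sigma\to\sigma'$ out of $\sigma$. In this edge, row $i$ is active and gives up its column $s=\sigma(i)$ for the smaller column $a$. Row $k\doteq\sigma^{-1}(a)$ receives $s$. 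It therefore suffices to show that the number of such ``active-larger-rank'' edges is at most $m|\mathcal F(\ell)|$.

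I would count these edges by their target $\sigma'$, which also lies in $\mathcal F(\ell)$ because the swap is allowed. Given $\sigma'$, the edge is determined by the ordered pair of rows $(i,k)$ with $\sigma'(i)<\sigma'(k)$. The source is recovered by swapping back, and the condition is that row $i$ is active in that source with rank $r_{\sigma'(k)}$. The goal is the per-vertex bound: for every $\sigma'$ and every row $i$, at most one row $k$ with $\sigma'(k)>\sigma'(i)$ has this property. Summing over the $m$ rows $i$ and over all $\sigma'$ then gives at most $m|\mathcal F(\ell)|$, which is \eqref{eq:active-down}.

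The per-vertex bound should come from the threshold equation \eqref{eq:conditional-water-level}. Suppose $k\ne k'$ both work, with $s=\sigma'(k)<s'=\sigma'(k')$, and let $\sigma_k,\sigma_{k'}$ be the two sources. In $\sigma_{k'}$, row $i$ is active at the larger rank $r_{s'}$, so $\tau_{\sigma_{k'}}p_i^2>r_{s'}$. I would compare $\tau_{\sigma_{k'}}$ with $\tau_{\sigma'}$ and $\tau_{\sigma_k}$, using the facts that $\psi$ is fixed and that each swap changes only two summands. Raising one row's rank and lowering another's moves the left side of \eqref{eq:conditional-water-level} in a controlled direction. Since the left side is nondecreasing in $\tau$, this gives inequalities between the thresholds. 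With these I would show that row $i$, active at $r_{s'}$ in $\sigma_{k'}$, forces an excess $(\tau p_i^2-r_{\cdot})_+$ that, together with the active excess of row $k$ (or $k'$) in the other configuration, pushes the water level above $1$. This contradicts \eqref{eq:conditional-water-level}, because every summand and their total are at most $1$ and the ranks are distinct integers.

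The main obstacle is exactly this uniqueness step, because $\tau_\sigma$ depends on the whole assignment and moves under the swaps. If strict uniqueness fails, my fallback is a weaker charging. For fixed $(\sigma',i)$, I would bound the number of valid $k$ by $1$ plus the total active excess $\sum_{j}(\tau_{\sigma'}p_j^2-r_{\sigma'(j)})_+\le1$, using integrality of the ranks. This gives a constant multiple of $m$, which would still suffice up to the constant $6$ in \cref{cor:one-query}.
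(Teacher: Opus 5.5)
\paragraph{Gap in the uniqueness step.}
Your reduction to triples $(\sigma,i,a)$ is right, and so is charging each triple to its swap target $\sigma'\in\mathcal F(\ell)$; the paper does the same. The problem is the second coordinate of your charge. You key on the \emph{active} row $i$ and claim that, for fixed $(\sigma',i)$, at most one partner $k$ with $\sigma'(k)>\sigma'(i)$ gives a source in which $i$ is active at rank $r_{\sigma'(k)}$. This is false.

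Take $\ell_i=1$ for all $i$ and $m\ge3$. Take a query whose only nonzero unresolved component is in row $1$ and which vanishes on disclosed coordinates, so $\psi\equiv0$. Then \eqref{eq:conditional-water-level} reads $(\tau p_1^2-r_{\sigma(1)})_+=1$. Hence $\tau_\sigma p_1^2=r_{\sigma(1)}+1$ and $A_\sigma=\{1\}$ for every $\sigma$. Let $\sigma'(1)=1$ and $k\ne1$. Exchanging rows $1$ and $k$ of $\sigma'$ gives a compatible source with row $1$ at column $\sigma'(k)>1$, and row $1$ is active there. For $m=3$, the target $(1,2,3)$ receives edges from both $(2,1,3)$ and $(3,2,1)$, each with active row $1$. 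So a single pair $(\sigma',i)$ can receive $m-1$ edges.

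No water-level contradiction of the kind you sketch can exist: one row being active at two different ranks in two different configurations is consistent, since the threshold moves with the configuration. The same example defeats your fallback. The count at $(\sigma',1)$ is $m-1$, not at most $1+\sum_j(\tau_{\sigma'}p_j^2-r_{\sigma'(j)})_+\le2$. In this example the total is still fine, $\E(\Sigma(1)-1)=(m-1)/2$, but it is not bounded uniformly per $(\sigma',i)$.

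\paragraph{The missing idea.}
Charge instead to $(\sigma',s)$, where $s=\sigma(i)$ is the column vacated by the active row. Equivalently, fix the row that receives the larger rank, and prove that the \emph{active} row is unique.

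Suppose rows $i\ne k$ both map to $(\sigma',s)$. Put $a_i=\sigma'(i)$ and $a_k=\sigma'(k)$; these are distinct and below $s$. Activity in the two sources gives $p_i,p_k\neq0$. In the source where $i$ holds $r_s$, row $k$ holds $r_{a_k}$ and $\tau_\sigma>r_s/p_i^2$. The left side of \eqref{eq:conditional-water-level} is below $1$ at $\tau=r_s/p_i^2$, so $p_k^2/p_i^2<(r_{a_k}+1)/r_s$. Symmetrically, $p_i^2/p_k^2<(r_{a_i}+1)/r_s$. Multiplying gives $r_s^2<(r_{a_i}+1)(r_{a_k}+1)\le r_s(r_s-1)$, because $r_{a_i},r_{a_k}$ are distinct integers below $r_s$. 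This is a contradiction. Since there are at most $m|\mathcal F(\ell)|$ keys $(\sigma',s)$, this yields \eqref{eq:active-down}.
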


\begin{proof}
Since $\Sigma$ is uniform, multiplying the expectation in
\eqref{eq:active-down} by $|\mathcal F(\ell)|$ counts triples
$(\sigma,i,a)$ with $i\in A_\sigma$ and
$\ell_i\le a<s\doteq\sigma(i)$. Map each triple to
$(\sigma',s)\in\mathcal F(\ell)\times\{1,\ldots,m\}$, where $\sigma'$
is the compatible assignment obtained by swapping $a,s$. We show injectivity.

Suppose for contradiction distinct rows $i,k$ give the same target $(\sigma',s)$.
Their columns $a_i\doteq\sigma'(i)$ and $a_k\doteq\sigma'(k)$ are distinct
and below $s$. In the preimage where row $i$ carries $r_s$, activity gives
$\tau_\sigma p_i^2>r_s>0$, hence $p_i\ne0$; the other preimage similarly
gives $p_k\ne0$.
At $\tau=r_s/p_i^2<\tau_\sigma$, the left side of
\eqref{eq:conditional-water-level} is below $1$, hence
$\frac{p_k^2}{p_i^2}<\frac{r_{a_k}+1}{r_s}$.
The other preimage gives $p_i^2/p_k^2<(r_{a_i}+1)/r_s$.
Multiplying these two ratio bounds gives
\[
  1=\frac{p_k^2}{p_i^2}\frac{p_i^2}{p_k^2}
  <\frac{(r_{a_k}+1)(r_{a_i}+1)}{r_s^2}<1,
\]
where the last inequality uses that $r_{a_i},r_{a_k}$ are distinct integers
below $r_s$. This contradiction proves the row is unique;
$a=\sigma'(i)$ and the inverse swap recover $\sigma$.
There are at most $m|\mathcal F(\ell)|$ targets.
\end{proof}

An inactive row $i$ has $\tau_\sigma p_i^2\le r_{\sigma(i)}$.
Its zero LMO component excludes a previously allowed smaller rank $r_a$
exactly when
\begin{equation}\label{eq:certified-cell-definition}
 \ell_i\le a<\sigma(i),\qquad
 r_a<\tau_\sigma p_i^2\le r_{\sigma(i)}.
\end{equation}
Let $N(\sigma)$ count these pairs $(i,a)$. The condition $\ell_i\le a$
ensures that the rank was allowed before the answer.

\begin{lemma}[Reduction to newly excluded pairs]\label{lem:certified-paper}
For the count $N(\sigma)$ defined by \eqref{eq:certified-cell-definition},
\begin{equation}\label{eq:boundary-certified}
  \E[\Delta-\Delta^+]\le2m+2\E N(\Sigma).
\end{equation}
\end{lemma}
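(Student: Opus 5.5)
The plan is to bound the numerator of \eqref{eq:edge-boundary-paper} directly, by charging each boundary edge to one of two kinds of ``good'' down-swaps: those controlled by \cref{lem:active-down} and those counted by $N$.

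\textbf{Step 1 (edges as down-triples).} The outgoing edges of $\sigma$ are exactly the triples $(\sigma,i,a)$ with $\ell_i\le a<s\doteq\sigma(i)$. In such a triple, row $i$ carries the larger rank $r_s$, and row $k\doteq\sigma^{-1}(a)$ carries $r_a$. The reverse edge $\sigma'\to\sigma$ is the triple $(\sigma',k,a)$. It is again of this form: $\sigma'(k)=s>a$, and $\ell_k\le\sigma(k)=a$. So the directed boundary edges are precisely the triples whose endpoints give different answers, and reversal is an involution on these triples. Let $G$ be the set of \emph{good} triples, namely those with $i\in A_\sigma$ or with $(i,a)$ counted in $N(\sigma)$. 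By \cref{lem:active-down} and the definition of $N$,
\[
 |G|\le m|\mathcal F(\ell)|+\textstyle\sum_\sigma N(\sigma).
\]
It therefore suffices to show that every boundary triple either lies in $G$ or has its reverse in $G$. Then the boundary count is at most $2|G|$, and dividing by $|\mathcal F(\ell)|$ gives \eqref{eq:boundary-certified}.

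\textbf{Step 2 (case analysis).} Fix a boundary triple $(\sigma,i,a)$ that is not good. Then row $i$ is inactive, and since the pair is not newly excluded, $\tau_\sigma p_i^2\le r_a<r_s$.
\begin{itemize}[leftmargin=2em]
\item \emph{Row $k$ inactive.} Then $\tau_\sigma p_k^2\le r_a$ as well, so both summands of \eqref{eq:conditional-water-level} at $\tau_\sigma$ vanish under $\sigma$ and under $\sigma'$. Hence $\tau_{\sigma'}=\tau_\sigma$, both rows remain inactive, and every component of \eqref{eq:lmo} is unchanged. The answers coincide, contradicting that this is a boundary edge.
\item \emph{Row $k$ active}, that is, $\tau_\sigma p_k^2>r_a$. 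Pass from $\sigma$ to $\sigma'$: row $i$'s summand at $\tau_\sigma$ stays zero, because $\tau_\sigma p_i^2\le r_a$. Row $k$'s summand can only decrease, because its rank rises from $r_a$ to $r_s$. So the left side of \eqref{eq:conditional-water-level} for $\sigma'$ is at most $1$ at $\tau_\sigma$, and monotonicity gives $\tau_{\sigma'}\ge\tau_\sigma$. Consequently $\tau_{\sigma'}p_k^2>r_a$. Now look at the reverse triple $(\sigma',k,a)$, where row $k$ carries $r_s$. Either $k\in A_{\sigma'}$, or $r_a<\tau_{\sigma'}p_k^2\le r_s$. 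In the second case the pair $(k,a)$ is newly excluded, since $\ell_k\le a$, so it is counted in $N(\sigma')$. Either way the reverse triple lies in $G$.
\end{itemize}

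\textbf{Main obstacle.} The delicate point is the monotonicity $\tau_{\sigma'}\ge\tau_\sigma$ in the last case. This is exactly where the standing assumption $\tau_\sigma p_i^2\le r_a$ is needed: it keeps row $i$'s term at zero after it receives the smaller rank. The other ingredient to verify carefully is that the ``different answers'' criterion really is equality of the full vector $z$. By \cref{lem:exact-lmo}, that equality is equivalent to having the same $\tau$, the same active ranks and the same inactive lower bounds, which the first case supplies.
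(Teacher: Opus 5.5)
Your proof is correct and follows the paper's argument. It charges each boundary edge to a down-swap whose larger-rank row is either active (via \cref{lem:active-down}) or newly excludes $r_a$ (via $N$), gains the factor $2$ from orientation reversal, and uses the same threshold-monotonicity comparison under the swap. The only difference is cosmetic: the paper assumes both endpoints are uncharged and derives $\tau_\sigma=\tau_{\sigma'}$ by comparing in both directions, whereas you split on whether row $k$ is active at $\sigma$ and use the one-sided inequality $\tau_{\sigma'}\ge\tau_\sigma$ to charge the reverse triple.
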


\begin{paperproof}{lem:certified-paper}
By \cref{lem:active-down}, edges whose larger rank is active at either
endpoint contribute at most $2m$ in expectation: apply the bound at each
endpoint, reversing orientations for the second.

Now consider an edge swapping $r_a<r_s$, with the larger-rank row inactive
at both endpoints. Let $\Phi,\Phi'$ be their threshold left sides in
\eqref{eq:conditional-water-level}, with roots $\tau,\tau'$.
Suppose neither endpoint's larger-rank row excludes $r_a$.
At $\tau$, replacing $r_s$ by $r_a$ creates no positive term, and replacing
$r_a$ by $r_s$ in the other row cannot increase its term. Hence
$\Phi'(\tau)\le\Phi(\tau)=1$ and $\tau'\ge\tau$.
Reversing the swap gives $\Phi(\tau')\le\Phi'(\tau')=1$ and $\tau\ge\tau'$.

The thresholds agree; both swapped rows are inactive, and all other
assignments are unchanged. The LMO answers therefore coincide by
\cref{lem:exact-lmo}. Thus every remaining edge between different answers
has a pair $(i,a)$ counted by $N(\sigma)$ at some endpoint $\sigma$.
The endpoint and pair determine the swap, so at most two orientations are
counted per pair. This gives the remainder $2\E N(\Sigma)$.
\end{paperproof}

It remains to establish a linear bound on $\E N(\Sigma)$, which we will do via the threshold equation. The right-hand side of \eqref{eq:conditional-water-level} is $1$, and all
terms are nonnegative. Thus for every unresolved row $j$ with $p_j^2>0$,
\begin{equation}\label{eq:shifted-occupied}
 \tau_\sigma\le\frac{r_{\sigma(j)}+1}{p_j^2}.
\end{equation}
Rows with $p_j=0$ are omitted when minimizing the quantities on the right.
The nonnegative known-coordinate contribution $\psi$ does not change this
inequality.

\begin{lemma}[Linear bound on newly excluded pairs]
\label{lem:certified-linear}
For every conditional distribution of the form \eqref{eq:nested-family} and
every query,
\[
 \E N(\Sigma)\le2m.
\]
\end{lemma}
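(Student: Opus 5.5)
The plan is to imitate the injection argument of \cref{lem:active-down}, this time allowing a bounded multiplicity. Multiplying by $|\mathcal F(\ell)|$, the quantity $|\mathcal F(\ell)|\,\E N(\Sigma)$ counts triples $(\sigma,i,a)$ satisfying \eqref{eq:certified-cell-definition}. Here row $i$ is inactive at $\sigma$, $s\doteq\sigma(i)$, $\ell_i\le a<s$ and $r_a<\tau_\sigma p_i^2\le r_s$; in particular $p_i\neq0$. Let $k\doteq\sigma^{-1}(a)$ and let $\sigma'$ be the allowed swap of columns $a,s$ between rows $i$ and $k$. I would map the triple to the target $(\sigma',s)\in\mathcal F(\ell)\times[m]$, exactly as before. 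The target determines $k=\sigma'^{-1}(s)$, and once the row $i$ is known, the inverse swap recovers $\sigma$ and $a=\sigma'(i)$. So it suffices to show that \emph{at most two} rows $i$ can produce the same target. This gives at most $2m|\mathcal F(\ell)|$ triples, hence $\E N(\Sigma)\le2m$.

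The tool for bounding multiplicities is \eqref{eq:shifted-occupied}, applied at the preimage $\sigma$ to row $k$, which carries $r_a$ there. Combined with the exclusion condition this gives
\[
 r_a<\tau_\sigma p_i^2\le \frac{p_i^2}{p_k^2}\,(r_a+1),
 \qquad\text{hence}\qquad p_i^2>\frac{r_a}{r_a+1}\,p_k^2 .
\]
Inactivity of row $i$ gives the companion bound $\tau_\sigma p_i^2\le r_s$. Using \eqref{eq:shifted-occupied} with row $i$ itself gives $\tau_\sigma\le (r_s+1)/p_i^2$. Combining these, I would bound $\tau_\sigma$ from below through the fact that the water level \eqref{eq:conditional-water-level} equals $1$, with row $i$ not contributing.

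Now suppose three distinct rows $i_1,i_2,i_3$, with columns $a_1,a_2,a_3<s$ in $\sigma'$, all map to the same target $(\sigma',s)$. The preimages $\sigma_1,\sigma_2,\sigma_3$ differ from $\sigma'$ only in rows $\{i_u,k\}$. I would compare their thresholds pairwise, as in \cref{lem:active-down}: evaluate the water-level function of $\sigma_u$ at the candidate level where row $i_v$ starts to exclude $r_{a_v}$, namely $\tau=r_{a_v}/p_{i_v}^2$. The aim is to derive cyclic ratio inequalities of the form $p_{i_v}^2/p_{i_u}^2<(r_{a_v}+1)/r_{a_u}$. Multiplying such inequalities around a cycle should contradict the distinctness of the integers $r_{a_u}$, which are separated by at least $1$.

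I expect this multiplicity step to be the main obstacle. In \cref{lem:active-down} activity forced the strict inequality $\tau_\sigma p_i^2>r_s$, whereas here the large rank is inactive and only the smaller rank is below the level. The chain of inequalities therefore loses one unit per link, and the contradiction for a pair can fail; this is why I allow multiplicity two rather than one. If the three-row contradiction does not close directly, my fallback is to split the triples by whether $p_i^2\ge p_k^2$ or $p_i^2<p_k^2$. In each half I would prove an injection using the ordering of the $p^2$ values to break the tie, and then add the two halves to obtain the factor $2$.
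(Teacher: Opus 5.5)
There is a genuine gap. Your whole count rests on the claim that at most two rows can produce the same target $(\sigma',s)$, and that claim is false. Take the first query, so $m=d=5$, $\ell\equiv1$, $r_j=j$ and $\psi\equiv0$, and let $p_1^2=c$, $p_2^2=2c$, $p_3^2=3c$, $p_4=p_5=0$ for some $c>0$. Fix the target $\sigma'=(1,2,3,5,4)$ and $s=5$, so $k=4$. For $u\in\{1,2,3\}$ let $\sigma_u$ exchange the columns of rows $u$ and $4$ in $\sigma'$. Row $u$ then holds rank $5$, and rows $4,5$ have zero query components. Writing $x\doteq c\tau$, the water level \eqref{eq:conditional-water-level} for $\sigma_u$ is $\sum_{v\in\{1,2,3\}\setminus\{u\}}v(x-1)_+=1$. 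This gives $\tau_{\sigma_u}p_u^2=6/5,\ 5/2,\ 4$ for $u=1,2,3$, each lying in $(u,5]$. Hence every triple $(\sigma_u,u,u)$ satisfies \eqref{eq:certified-cell-definition}, and all three map to the same target $(\sigma',5)$.

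The same construction with $q$ rows, $p_u^2=uc$ and $d=q+2$ yields $q$ preimages, so the multiplicity of your map is unbounded. Your ratio inequalities show why no cycle argument can help. Multiplying $p_{i_v}^2/p_{i_u}^2<(r_{a_v}+1)/r_{a_u}$ around any cycle gives $1<\prod_u(1+1/r_{a_u})$, which always holds. In \cref{lem:active-down} the contradiction came from the common large rank $r_s$ in the denominator; activity supplies it and inactivity does not. The fallback fails as well: in the example $p_k=0$, so every preimage lies in the same half $p_i^2>p_k^2$.

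The missing idea is to split the pairs by the margin of the excluded rank below the level, not by the ordering of the $p^2$ values.

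\emph{Small margin.} If $r_a+1\ge\tau_\sigma p_i^2$, then $r_a\in[\tau_\sigma p_i^2-1,\tau_\sigma p_i^2)$. This half-open interval contains at most one integer, so each row has at most one such pair for every $\sigma$. This class contributes at most $m$, with no injection needed; all three pairs in the example are of this kind.

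\emph{Large margin.} If $r_a+1<\tau_\sigma p_i^2$, put $\bar\tau\doteq(r_a+1)/p_i^2<\tau_\sigma$. By \eqref{eq:shifted-occupied}, every value $(r_{\sigma(j)}+1)/p_j^2$ with $p_j\ne0$ is at least $\tau_\sigma>\bar\tau$. After the swap, row $i$ has value exactly $\bar\tau$, row $k$'s value only increases, and all other values are unchanged. So $i$ is the unique minimizer of $(r_{\sigma'(j)}+1)/p_j^2$, the target determines $i$, and your map $(\sigma,i,a)\mapsto(\sigma',s)$ is injective on this class. It contributes at most $m$.

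The factor $2$ comes from adding these two classes, not from a multiplicity-two map.
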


\begin{proof}
Separate the pairs in \eqref{eq:certified-cell-definition} according to
whether $r_a+1<\tau_\sigma p_i^2$. The pairs not satisfying this strict
inequality have $\tau_\sigma p_i^2-1\le r_a<\tau_\sigma p_i^2$.
This half-open interval contains at most one integer, so there is at most one
such pair per row. This includes equality at the left endpoint and requires
no generic-position hypothesis.

For a pair satisfying $r_a+1<\tau_\sigma p_i^2$, set $s\doteq\sigma(i)>a$ and
transpose indices $a,s$ to obtain $\sigma'$. The target is compatible because
row $i$ accepts $a$ and the other row moves upward. Map the triple
$(\sigma,i,a)$ to $(\sigma',s)$.

Here $p_i^2>0$ and $u\doteq\frac{r_a+1}{p_i^2}<\tau_\sigma$.
By \eqref{eq:shifted-occupied}, every quantity
$(r_{\sigma(j)}+1)/p_j^2$ with $p_j\ne0$ is strictly greater than $u$.
After the transposition, row $i$ has value $u$. The value of the other swapped
row increases if its query component is nonzero, and all remaining values are
unchanged. Thus $i$ is the unique minimizer of
$(r_{\sigma'(j)}+1)/p_j^2$ over nonzero unresolved query components. The target
$\sigma'$ determines $i$ and $a=\sigma'(i)$; with $s$, it recovers $\sigma$
by transposing back. This proves injectivity without using any property of
the new threshold $\tau_{\sigma'}$.

There are at most $m|\mathcal F(\ell)|$ targets, so the expected number of
pairs satisfying the strict inequality is at most $m$. Adding the at most
one remaining pair per row gives $2m$. If all unresolved query components
are zero, no pairs satisfy \eqref{eq:certified-cell-definition}; if $m=0$,
the count is also zero. The zero full query has already been treated by the
oracle convention.
\end{proof}

\begin{corollary}[One-query decrease]\label{cor:one-query}
For every adaptive transcript,
\begin{equation}\label{eq:one-query}
 \E[\Delta-\Delta^+\mid\mathcal H_t]\le6m.
\end{equation}
\end{corollary}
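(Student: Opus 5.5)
The corollary follows by combining \cref{lem:certified-paper} with \cref{lem:certified-linear}, once the conditioning on $\mathcal H_t$ has been reduced to the uniform setting of \cref{sec:one-query-progress}.

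\emph{Reduction.} We fix the seed $U=u$ and a realized transcript $\Tr_t$ of positive probability. The method is then deterministic, so the next query $(x_t,p_t)$ is a fixed vector determined by $\Tr_t$.

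By \cref{lem:posterior-form} and the reindexing of \cref{sec:exact-transcripts}, the conditional law of the induced reduced permutation $\Sigma$ given $\mathcal H_t$ is uniform on $\mathcal F(\ell)$. Here $\ell$ is the current bound vector and $m=m_t$.

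The quantities $\Delta=\Delta(\ell)$ and $\Delta^+=\Delta(\ell^+)$ are functions of the transcript before and after the answer. The value and gradient components of $y_t$ are common to all instances and carry no information. Hence $\E[\Delta-\Delta^+\mid\mathcal H_t]$ equals the expectation over $\Sigma$ uniform on $\mathcal F(\ell)$ that is treated in \eqref{eq:edge-boundary-paper}.

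\emph{Degenerate cases.} If $m=0$ or $p_t=0$, the convention $\LMO_{S_\pi}(0)=0$ together with $\Delta=0$ gives $\Delta^+=\Delta$. The bound $6m$ is then trivial.

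\emph{Main case.} Otherwise the hypotheses of \cref{sec:one-query-progress} hold for the fixed query $p=p_t$. \Cref{lem:certified-paper} gives
\[
\E[\Delta-\Delta^+\mid\mathcal H_t]\le 2m+2\,\E N(\Sigma).
\]
\Cref{lem:certified-linear} holds for every family of the form \eqref{eq:nested-family} and every query, so $\E N(\Sigma)\le 2m$. Together these give $2m+4m=6m$.

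\emph{What needs checking.} The only delicate point is that the lemmas are stated for an arbitrary fixed $p$ and an arbitrary nested family $\mathcal F(\ell)$. Adaptivity, meaning $p$ depending on past answers, therefore causes no issue, because $p$ is measurable with respect to $\mathcal H_t$.

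Since $\Delta^+\le\Delta$ holds pointwise, all expectations are of nonnegative bounded quantities. Measurability over the seed is immediate, because the identity holds for every fixed seed and every positive-probability transcript, and hence almost surely.
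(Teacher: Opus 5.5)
Your proposal is correct and follows the paper's own (implicit) argument. You condition on the seed and transcript so that, by \cref{lem:posterior-form}, $\Sigma$ is uniform on $\mathcal F(\ell)$ with a fixed query $p$; you dispose of the cases $m=0$ or $p=0$ directly; and you then combine \cref{lem:certified-paper} with \cref{lem:certified-linear} to get $2m+2\cdot 2m=6m$.
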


\label{paper-main-end}

\section*{AI use statement}
Generative AI tools assisted throughout the research process with formulation and revision of mathematical arguments, implementation of verification and plotting scripts, formalization, and manuscript editing. The associated checks include symbolic calculations, numerical and adversarial tests, and machine-checked proofs for the results covered by the formalization. All claims have been independently verified by the authors. We take responsibility for the final content of this work, including text, claims or artifacts produced with the aid of generative AI.

\section*{Acknowledgements}
This research was conducted in the context of the Agentic AI for Mathematics (EF-LiOpt-3) project at the Berlin Mathematics Research Center MATH$^+$ (EXC-2046/2, project ID 390685689), funded by the Deutsche Forschungsgemeinschaft (DFG, German Research Foundation) under Germany's Excellence Strategy. This work has been partially supported by OpenAI's Academic Researchers program.  \bibliographystyle{plainnat}

\clearpage
\appendix
\PrintDeferredProofs
\section{Detailed comparison with existing lower bounds}
\label{app:existing-bounds}

A lower bound's implications for randomization depend on its algorithm
class, its horizon-dependent data, and the time at which the instance is fixed
relative to the seed. We examine these choices in the two recent $1/T^2$
lower bounds.

\subsection{Distributional and randomized values}\label{sec:minimax}
For a finite instance family $\mathcal I$, let $\mathcal A_T$ be the
admissible deterministic methods using at most $T$ calls. For bounded
measurable loss $\mathcal L\in[0,M_{\rm loss}]$, define
\begin{align}
 V_{\rm dist}(\mu)&\doteq\inf_{A\in\mathcal A_T}\E_{I\sim\mu}\mathcal L(A,I),
 \label{eq:distributional-value}\\
 V_{\rm rand}&\doteq\inf_\nu\sup_{I\in\mathcal I}\E_{A\sim\nu}\mathcal L(A,I),
 \label{eq:randomized-value}\qquad
 V_{\rm det}\doteq\inf_{A\in\mathcal A_T}\sup_I\mathcal L(A,I).
\end{align}
The deterministic method may know $\mu$; $\nu$ ranges over arbitrary
measurable mixtures of methods. In our application, compactness, feasibility
and finiteness give
$M_{\rm loss}\doteq\max_\pi\max_{x\in S_\pi}(f(x)-f(x_\star^{(\pi)}))<\infty$.
\begin{proposition}[Yao's principle]\label{prop:minimax}
For every prior $\mu$, and under closure under arbitrary mixtures,
\begin{align}
 V_{\rm dist}(\mu)&\le V_{\rm rand}\le V_{\rm det},\label{eq:yao-inequality}\\
 \sup_\mu V_{\rm dist}(\mu)&=V_{\rm rand}.\label{eq:minimax-equality}
\end{align}
\end{proposition}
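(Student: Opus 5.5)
We prove the two parts of \eqref{eq:yao-inequality} directly and then obtain \eqref{eq:minimax-equality} from a finite-dimensional minimax theorem.

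\textbf{Inequalities \eqref{eq:yao-inequality}.} For $V_{\rm rand}\le V_{\rm det}$, note that every deterministic $A$ is the Dirac mixture $\nu=\delta_A$. For that $\nu$, the quantity $\sup_I\E_{A\sim\nu}\mathcal L$ equals $\sup_I\mathcal L(A,I)$; taking the infimum over $A$ gives the bound.

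For $V_{\rm dist}(\mu)\le V_{\rm rand}$, fix any mixture $\nu$ and any prior $\mu$ on the finite family $\mathcal I$. Since $\mathcal L$ is bounded and measurable, Fubini (Tonelli suffices, as the loss is nonnegative) gives
\[
\sup_I\E_\nu\mathcal L(A,I)\ge\E_{I\sim\mu}\E_{A\sim\nu}\mathcal L(A,I)=\E_{A\sim\nu}\E_{I\sim\mu}\mathcal L(A,I)\ge\inf_A\E_\mu\mathcal L(A,I).
\]
Taking the infimum over $\nu$ proves the first inequality. Taking the supremum over $\mu$ then gives $\sup_\mu V_{\rm dist}(\mu)\le V_{\rm rand}$.

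\textbf{Reverse inequality.} Consider the vector-valued risk map $A\mapsto R(A)\doteq(\mathcal L(A,I))_{I\in\mathcal I}\in[0,M_{\rm loss}]^{\mathcal I}$. Let $\mathcal R$ be the set of risk vectors $\E_{A\sim\nu}R(A)$ as $\nu$ ranges over mixtures. Closure under arbitrary mixtures makes $\mathcal R$ convex, and it is bounded. Pass to its closure $\overline{\mathcal R}$, which is compact and convex in $\R^{\mathcal I}$. The payoff $\langle\mu,r\rangle$ is bilinear on $\Delta(\mathcal I)\times\overline{\mathcal R}$, both compact convex sets, so von Neumann's (or Sion's) minimax theorem yields
\[
\min_{r\in\overline{\mathcal R}}\max_\mu\langle\mu,r\rangle=\max_\mu\min_{r\in\overline{\mathcal R}}\langle\mu,r\rangle.
\]
Because $\langle\mu,r\rangle$ is continuous in $r$, the left side equals $\inf_\nu\sup_I\E_\nu\mathcal L=V_{\rm rand}$. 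On the right, linearity of $r\mapsto\langle\mu,r\rangle$ shows that the infimum over mixtures equals the infimum over Dirac mixtures, i.e.\ $V_{\rm dist}(\mu)$. Hence $V_{\rm rand}=\max_\mu V_{\rm dist}(\mu)$, and the supremum in \eqref{eq:minimax-equality} is even attained.

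\textbf{Main obstacle.} The delicate step is measure-theoretic: the method class $\mathcal A_T$ is not finite, so one must make sure that mixtures $\nu$ are well-defined and that Fubini applies. This holds once $(A,I)\mapsto\mathcal L(A,I)$ is jointly measurable, i.e.\ methods are parametrized measurably by the seed $U$ as in \eqref{eq:transcript}. The approach above sidesteps any compactness of the method space by working only in the finite-dimensional risk set, where convexity comes from mixture closure and compactness from taking the closure.
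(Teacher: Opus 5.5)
Your proof is correct and follows essentially the same route as the paper. You use Dirac mixtures and the averaging/Fubini inequality for \eqref{eq:yao-inequality}, and then apply a finite-dimensional minimax theorem to the compact convex closure of risk vectors in $[0,M_{\rm loss}]^{|\mathcal I|}$, noting that closure does not change the relevant infima. The only cosmetic differences are that you close the set of mixture risk vectors directly rather than the convex hull of deterministic ones, and that you additionally observe the supremum over $\mu$ is attained.
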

\begin{paperproof}[inline]{prop:minimax}
For any $\mu,\nu$, boundedness and measurability give
$\sup_I\E_\nu\mathcal L\ge\E_\nu\E_\mu\mathcal L\ge V_{\rm dist}(\mu)$.
Take the infimum over $\nu$; deterministic methods are degenerate mixtures.
For equality, take the closure of the deterministic loss vectors in the
finite-dimensional cube $[0,M_{\rm loss}]^{|\mathcal I|}$ and its compact
convex hull. Closure does not change the infimum of a linear functional,
and finite mixtures approximate its points. The finite-dimensional minimax
theorem gives the equality \citep{yao1977probabilistic,sion1958minimax}.
\end{paperproof}
A lower bound for one fixed prior therefore suffices.

\subsection{Further related work}\label{app:other-related-work}
Classical and information-theoretic oracle bounds limit what a query reveals
\citep{nemirovski1983problem,raginsky2009information,agarwal2012information,
braun2015information,braun2017lower}. Our approach is somewhat similar but it determines the exact conditional distribution (and provides a combinatorial statistic $\Delta$) rather than converting real-valued answers into a finite alphabet whose information can be measured with information-theoretic notions such as entropy. Adaptive permutation problems such as permutation Mastermind have related conditional-assignment questions
\citep{larsen2024mastermind}, but the answers of an LMO require rather different threshold constraints. There is also a recent result by \citet{hazan2026linearly} which establishes linear-convergence for smooth convex sets, however it uses membership-based supporting-tangent computations and a uniform interior rolling-ball
condition; both the oracle and the geometry differ
from the nonsmooth-boundary sets considered here.

\subsection{\citep{grimmer2026lower}: deterministic and randomized quantifiers}
\label{sec:grimmer-liu-gap}

\citet{grimmer2026lower} prove a sharp $\Omega(L\diam(S)^2/T^2)$ rate for so-called \emph{LMO-span methods}, which is a subclass of general LMO-based methods. In the case of \emph{deterministic methods}, this bound can then be converted into one for \emph{general} LMO-based methods using a resisting oracle; see further below.

More precisely, \citet{grimmer2026lower} show that for \emph{LMO-span methods}, one fixed ordered instance is hard for every admissible trajectory. Thus, fixing a
randomized method's seed $U=u$ gives another admissible trajectory on that
same instance. The lower bound therefore holds for every $u$ and remains
valid after averaging over $U$. Put differently, for randomized LMO-span methods, \citet{grimmer2026lower} already implicitly provide a lower bound on the rate of $\Omega(L\diam(S)^2/T^2)$ however not for general randomized LMO-based methods. We will now show that the LMO-span restriction is indeed significant; see also the discussion in \citet{grimmer2026lower}.

\begin{remark}[Methods included in and excluded from the span class]
Let $z_{k+1}\doteq\LMO_S(p_k)$. The \emph{LMO-span} model of \citet{grimmer2026lower} requires
\begin{align*}
 p_k&\in\spann\!\left(
   \{x_\ell-x_0\}_{\ell=1}^k\cup
   \{z_\ell-x_0\}_{\ell=1}^k\cup
   \{\nabla f(x_\ell)\}_{\ell=0}^k\right)\setminus\{0\},\\
 x_{k+1}&\in\operatorname{conv}\{x_0,z_1,\ldots,z_{k+1}\}.
\end{align*}
There is no computational restriction within these sets. The class contains
ordinary Frank--Wolfe with open-loop steps, short steps, or exact line search,
and the usual away-step, pairwise, fully corrective, and blended pairwise
variants: their LMO direction is the gradient or a linear combination of
observed vectors, and their feasible iterates remain in the hull of previously
obtained atoms. Randomizing their step choices preserves these conditions for
every fixed seed. 

The broader class of general first-order LMO-based methods also permits directions such as
\[
  p_k\doteq\nabla f(x_k)+\delta_k e_{J_k}
  \qquad\text{or}\qquad
  p_k\doteq H_k\nabla f(x_k),
\]
which the span condition excludes when the added coordinate or transformed
gradient lies outside the observed span. Such directions can still be used
with the feasible update
$x_{k+1}\doteq(1-\gamma_k)x_k+\gamma_k z_{k+1}$.

For the following examples, $C,w_k,S_\pi,M$, and $f$ are the family parameters
in \eqref{eq:constants}--\eqref{eq:objective}, and
$x_\star^{(\pi)}$ is the optimizer in \eqref{eq:optimum}. At $x_0=0$,
$\nabla f(0)=-M\mathbf1$, so a random coordinate perturbation already lies
outside the permitted span at the first query. The span class also excludes
methods that probe the hidden set and then construct a feasible output outside
the atom hull. Indeed,
\[
  \LMO_{S_\pi}(e_i)
  =-\left(\sqrt{w_{\pi(i)}^2+2C^2}-w_{\pi(i)}\right)e_i,
\]
and its magnitude is strictly decreasing in $w_{\pi(i)}$. An exact coordinate
query thus identifies $\pi(i)$. After $d$ coordinate queries, a method can
recover $\pi$ and output its explicit optimizer. The query atoms are
componentwise nonpositive, whereas the optimizer has $d-1$ positive
coordinates, so that output is outside their convex hull.

There is also a one-call separation on a specified public instance. Fix
$S_{\pi_0}$ and its optimizer $x_\star^{(\pi_0)}$. A method can store
$p^\star\doteq\nabla f(x_\star^{(\pi_0)})$, query
$z_1\doteq\LMO_S(p^\star)$, and output $z_1$. This output is feasible on every set
presented to the method. On $S_{\pi_0}$, first-order optimality gives
\[
 \langle p^\star,x-x_\star^{(\pi_0)}\rangle\ge0
 \quad\text{for every }x\in S_{\pi_0}.
\]
Uniqueness of linear minimization over the strongly convex set gives
$z_1=x_\star^{(\pi_0)}$, so the method has zero error on that instance. It is
not a LMO-span method though: $p^\star$ has unequal coordinates, whereas the initial permitted span is $\spann\{\nabla f(0)\}=\spann\{\mathbf1\}$.

These are separations between algorithm classes on specified instances, not
$o(T^{-2})$ worst-case algorithms. \citet{grimmer2026lower}'s resisting oracle theorem rules out such an improvement for deterministic methods, and our distributional theorem rules it out for randomized methods. 
\end{remark}

To go from LMO-span methods to unrestricted deterministic LMO-based methods, \citet{grimmer2026lower} instead assign the
hidden coordinate ranks gradually in response to the queries. After producing
a deterministic transcript, they complete those assignments to a fixed
permutation that would have generated the same transcript. This is a
resisting-oracle argument.

To make this distinction clear, write $\mathcal A_{\rm det}$ for the deterministic class, $I$ for an instance, $\mathcal L$ for the loss, and $c_T$ for the asserted lower threshold. These symbols are schematic here; see also \cref{sec:minimax}. The resisting
argument gives
\begin{equation}\label{eq:deterministic-quantifiers}
  \forall A\in\mathcal A_{\rm det}\quad
  \exists I_A\quad
  \mathcal L(A,I_A)\ge c_T.
\end{equation}
Applying it separately to fixed seeds yields only $\forall u\,\exists I_u$.
A randomized lower bound requires
\begin{equation}\label{eq:randomized-quantifiers}
  \forall\nu\in\Delta(\mathcal A_{\rm det})\quad
  \exists I\quad
  \E_{A\sim\nu}\mathcal L(A,I)\ge c_T,
\end{equation}
where $\Delta(\mathcal A_{\rm det})$ denotes probability distributions over
the deterministic methods. The same instance must work after averaging over
seeds and cannot depend on the realized seed; this is precisely what makes things much harder.

A sufficient distributional statement is
\begin{equation}\label{eq:distributional-quantifiers}
  \exists\mu\quad\forall A\in\mathcal A_{\rm det}:
  \qquad
  \E_{I\sim\mu}\mathcal L(A,I)\ge c_T,
\end{equation}
with $\mu$ fixed before the method runs, i.e., the algorithm may know the distribution but not the realization from that distribution. Yao's inequality then gives \eqref{eq:randomized-quantifiers}. We take $\mu$ uniform over the coordinate
permutations in \citet{grimmer2026lower}'s family and we retain their sets $S_\pi$,
square-root-spaced weights, common quadratic objective, optimizer and
exact-LMO formulas, and conversion of optimizer variation into objective error. However, their resisting argument averages over completions of an unconstrained
suffix of ranks, while under our fixed prior, an unrestricted query can instead
(i) reveal several ranks simultaneously and (ii) impose row-dependent lower bounds that need to be taken into consideration for the conditional distributions. In \Crefrange{sec:exact-transcripts}{sec:one-query-progress} we characterize these
conditional distributions, prove their variance bound, and bound the expected
decrease of $\Delta$. We emphasize, that our argument is a distributional complexity argument with a fixed prior for unrestricted randomized methods and \emph{not} a randomization of the resisting oracle, which would most likely reduce to their randomized span-class methods lower bound.

\subsection{\citep{halbey2026lower}: prescribed trajectories and initialization sensitivity}
\label{sec:halbey-diagnosis}

\citet{halbey2026lower} study
\begin{equation}\label{eq:halbey-model}
  \min_{\|x\|_2\le1}\|x-p\|_2^2,
  \qquad \|p\|_2=1,
\end{equation}
for Frank--Wolfe with exact line search or short steps. For every requested horizon, they construct an
initialization whose objective gap is $\Theta(1/t^2)$ over a long segment.
This proves sharpness of the $O(1/T^2)$ rate for those updates.
It does not determine the measure of such initializations or lower-bound
unrestricted LMO methods. 

We analyze the sensitivity of their exact-line-search backward construction (and hence also short-steps). For the exact-line-search segments analyzed below,
the restriction is the slow-envelope condition
\eqref{eq:halbey-slow-envelope} on the contraction factors at every step;
we make no sensitivity claim for portions of a trajectory outside that condition. For a nonoptimal iterate with noncollinear residual, write
\[
  e\doteq x-p,\qquad r\doteq\|e\|_2,\qquad u\doteq e/r,
  \qquad \theta\doteq\langle u,p\rangle.
\]
Here $r$ is the distance to the optimizer and $\theta$ specifies the residual's
orientation. Rotational symmetry keeps the trajectory in the plane spanned
by $x_0$ and $p$. The unit-ball LMO returns $v=-u$. Exact line search along
$d\doteq v-x=-(1+r)u-p$ gives
\[
  \gamma(r,\theta)
  =\frac{r(1+r+\theta)}{(1+r)^2+1+2(1+r)\theta}.
\]
Substitution in $x^+\doteq x+\gamma d$ yields the exact two-dimensional recurrence
\begin{equation}\label{eq:halbey-polar-map}
  F(r,\theta)\doteq(r^+,\theta^+)
  =\bigl(rs(r,\theta),-(1+r)s(r,\theta)\bigr),
  \qquad
  s(r,\theta)^2
  =\frac{1-\theta^2}{(1+r)^2+1+2(1+r)\theta}.
\end{equation}

Its Jacobian describes the first-order propagation of perturbations in these
two state coordinates.

On the slow segments constructed by \citet{halbey2026lower}, the contraction
$s_t\doteq r_{t+1}/r_t$ satisfies
\begin{equation}\label{eq:halbey-slow-envelope}
  s_t=1-\frac43r_t+c_tr_t^2,
  \qquad 1\le c_t\le\frac52.
\end{equation}
The radial recurrence gives $r_t=\Theta(1/t)$. After one forward step, the
angular recurrence gives
\begin{equation}\label{eq:halbey-slow-curve}
  \theta_t=-1+\frac13r_t+O(r_t^2).
\end{equation}
Along this curve, differentiation of \eqref{eq:halbey-polar-map} gives
\begin{equation}\label{eq:halbey-jacobian-limit}
  DF(r_t,\theta_t)
  =\begin{pmatrix}1&0\\[1mm]1&-2\end{pmatrix}+O(r_t).
\end{equation}

The limiting matrix arising from $r \to 0^+$ with $\theta_t = -1+\frac13r_t+O(r_t^2) \to -1$ along the trajectory has tangent eigenvector $(3,1)^\top$ with eigenvalue $1$ and transversal eigenvector $(0,1)^\top$ with eigenvalue $-2$, i.e., 
i.e., it leaves $(3,1)$ invariant, tangent to \eqref{eq:halbey-slow-curve}, and multiplies the transverse direction $(0,1)$ by $-2$. In particular, a transverse perturbation thus changes sign and approximately doubles in one slow step. The following statement accounts for the variation of the
Jacobians along the segment.

\begin{proposition}[Initialization sensitivity along a slow segment]
\label{prop:halbey-bit-law}
Reindex a slow segment satisfying \eqref{eq:halbey-slow-envelope} after entry
into a sufficiently small fixed neighborhood of $(0,-1)$ and after one
forward step, so that \eqref{eq:halbey-slow-curve} holds from time zero.
Fix entry scales $0<r_-\le r_+$ and assume $r_-\le r_0\le r_+$, with $r_+$
sufficiently small. Put
\[
  \mathcal J_T
  \doteq DF(r_{T-1},\theta_{T-1})\cdots DF(r_0,\theta_0),
  \qquad \sigma_T\doteq\|\mathcal J_T\|_2.
\]
There are constants $a,A,C>0$, depending only on the fixed entry scales and
slow-envelope bounds, not on the segment length, such that
\begin{equation}\label{eq:halbey-one-bit-law}
  a\,2^T(T+1)^{-C}
  \le \sigma_T
  \le A\,2^T(T+1)^C.
\end{equation}
Consequently $\log_2\sigma_T=T+O(\log T)$.
\end{proposition}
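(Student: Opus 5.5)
We will treat $\mathcal J_T$ as a product of perturbations of the fixed matrix
\[
 B\doteq\begin{pmatrix}1&0\\ 1&-2\end{pmatrix},
\]
diagonalised once and for all. Let $P$ have columns $(3,1)^\top$ and $(0,1)^\top$, so that $P^{-1}BP=\operatorname{diag}(1,-2)$. By \eqref{eq:halbey-jacobian-limit}, together with the smoothness of $F$ near $(0,-1)$ and the curve relation \eqref{eq:halbey-slow-curve}, we can write
\[
 P^{-1}DF(r_t,\theta_t)P=\operatorname{diag}(1,-2)+E_t,\qquad \|E_t\|\le K r_t,
\]
with $K$ uniform on the fixed small neighborhood. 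The slow envelope \eqref{eq:halbey-slow-envelope} then gives two-sided control of the radii. Since $1/r_{t+1}-1/r_t=\frac43+O(r_t)$ with constants fixed by $1\le c_t\le\frac52$, and $r_0\in[r_-,r_+]$, we get $r_t\le \kappa/(t+1)$. Here $\kappa$ depends only on $r_\pm$ and the envelope bounds. Consequently $\sum_{t<T}Kr_t\le K\kappa\log(T+1)+O(1)$, and this logarithmic sum is what produces the polynomial factors $(T+1)^{\pm C}$.

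\textbf{Upper bound.} This is immediate. Submultiplicativity gives
\[
 \sigma_T\le\|P\|\|P^{-1}\|\prod_{t<T}(2+Kr_t)\le A\,2^T\prod_t(1+Kr_t/2)\le A\,2^T(T+1)^{C}.
\]

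\textbf{Lower bound.} This is the main obstacle, because norms of products are not bounded below by products of lower bounds. We plan to use an invariant-cone argument in the diagonalised coordinates $(\xi,\eta)$. Consider the cone $\mathcal K=\{|\xi|\le|\eta|\}$. Each factor maps $(\xi,\eta)$ to $(\xi',\eta')$ with $|\eta'|\ge(2-Kr_t)|\eta|-Kr_t|\xi|$ and $|\xi'|\le(1+Kr_t)|\xi|+Kr_t|\eta|$. For $r_+$ small (so $Kr_t\le 1/4$, say), $\mathcal K$ is mapped into itself, and on $\mathcal K$ we have $|\eta'|\ge(2-2Kr_t)|\eta|$. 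Starting from the vector $(0,1)$ and iterating, we obtain
\[
 \|\mathcal J_T P e_2\|\ge\|P^{-1}\|^{-1}\prod_{t<T}2(1-Kr_t)\ge a\,2^T(T+1)^{-C},
\]
using $\prod(1-Kr_t)\ge\exp(-2K\sum r_t)$ for $Kr_t\le 1/4$. Since $\sigma_T\ge\|\mathcal J_T Pe_2\|/\|Pe_2\|$, this gives the lower bound.

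\textbf{Remaining checks.} First, the reindexing hypothesis guarantees that every $(r_t,\theta_t)$ stays in the neighborhood where the $O(r_t)$ remainder in \eqref{eq:halbey-jacobian-limit} holds with the uniform constant $K$. Second, the constants $a,A,C$ depend only on $K$, $\kappa$, $P$, and $r_\pm$, not on $T$. Taking logarithms of \eqref{eq:halbey-one-bit-law} then gives $\log_2\sigma_T=T+O(\log T)$.
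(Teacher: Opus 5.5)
Your proposal is correct and follows essentially the same route as the paper: the same eigenbasis $P$, the perturbation bound of order $1/(t+1)$ obtained from $r_t=O(1/(t+1))$, the invariant cone $\{|\xi|\le|\eta|\}$ under the smallness condition $Kr_t\le 1/4$ for the lower bound, and submultiplicativity together with a harmonic sum for the upper bound. One small correction: $F$ is not smooth at $(0,-1)$, since $D(0,-1)=0$ there and the derivatives of $s$ have direction-dependent limits, so the uniform remainder $\|E_t\|\le Kr_t$ must come, as in the paper, from substituting the slow-curve expansion \eqref{eq:halbey-slow-curve}, with its bounded second-order coefficient, into the derivative formulas rather than from smoothness of $F$.
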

\begin{proof}[Proof Sketch.]
The full proof can be found in \cref{app:halbey-precision}. In the eigenbasis of the
limiting matrix, the Jacobian is $\operatorname{diag}(1,-2)+E_t$, with
$\|E_t\|=O(1/t)$. A cone about the transverse direction is invariant.
Multiplying its expansion factors $2(1-O(1/t))$ gives the lower bound;
harmonic summation contributes only a polynomial factor. An operator-norm
estimate gives the upper bound.
\end{proof}

To state the associated linearized precision requirement, define
\[
  \Lambda_T
  \doteq\max_{0\le t\le T}\|\mathcal J_t\|_2. \qquad \text{and} \qquad \mathcal J_0 = I.
\]
A radius-$u$ uncertainty ball about the initialization has worst-case
first-order trajectory error $u\Lambda_T$. For tolerance $\eta$, the number
of binary digits required in this linearized uncertainty model is
\begin{equation}\label{eq:halbey-float-bits}
  B_{\rm lin}(T,\eta)
  \doteq\left\lceil\log_2\frac{\Lambda_T}{\eta}\right\rceil
  =T+\log_2(1/\eta)+O(\log T).
\end{equation}

For fixed or polynomially shrinking tolerance, the leading term is one binary
digit per slow iteration. The approximately $10^{-300}$ numerical
accuracy reported in \citet{halbey2026lower} for a $1000$-step construction is consistent with this scale: $300\log_2 10 \approx 996.6$. Note, that \Cref{prop:halbey-bit-law} is a worst-case \emph{linearized} statement about small transverse
errors along a prescribed trajectory. It does not imply that every
lower-precision run fails and in paticular, a specially representable initialization, a tangential perturbation, or favorable rounding cancellation may behave better. \Cref{app:halbey-precision} proves a separate nonlinear bound
for additive errors in the polar recurrence. 
The exact map also gives a measure-zero result in the plane for the set $\mathcal E$ of initializations whose trajectories never reach $p$ in finite time and
eventually satisfy the slow-envelope condition. These trajectories approach $p$ asymptotically. The result concerns infinite trajectories, not hardness at a single prescribed horizon.

\begin{theorem}[Zero area for the eventual slow envelope]
\label{thm:halbey-measure-zero}
Consider exact-line-search Frank--Wolfe on the unit disk in $\R^2$ for
$f(x)\doteq\|x-p\|_2^2$, with $\|p\|_2=1$, as in \eqref{eq:halbey-model}.
The optimizer $p$ is kept fixed once reached.
Let $\mathcal E$ be the set of $x_0\in B_1(0)$ whose iterates never reach $p$
and satisfy \eqref{eq:halbey-slow-envelope} for every sufficiently large $t$.
Then $\mathcal E$ has two-dimensional Lebesgue measure zero, although it is
uncountable. For every $x_0\in\mathcal E$,
\begin{equation}\label{eq:halbey-eventual-rate}
  t r_t\longrightarrow\frac34,
  \qquad
  t^2\bigl(f(x_t)-f(p)\bigr)\longrightarrow\frac9{16}.
\end{equation}
In particular, an initialization distribution absolutely continuous with
respect to area assigns probability zero to $\mathcal E$.
\end{theorem}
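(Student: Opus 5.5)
The plan is to work in the polar coordinates $(r,\theta)$ of \eqref{eq:halbey-polar-map}. First, the exceptional states where the polar description degenerates (iterate equal to $p$, collinear residual $\theta=\pm1$) form a null set: a point plus a line segment. The set of $x_0$ mapped into it after finitely many steps should also be null, by the same preimage argument used below. So we may assume all iterates are nonoptimal and noncollinear. Off this set, the exact-line-search map $G:x\mapsto x^+$ is real-analytic. Its Jacobian determinant is not identically zero: near $(0,-1)$ it is about $1\cdot(-2)$ in polar coordinates, by \eqref{eq:halbey-jacobian-limit}. Hence the critical set $Z=\{\det DG=0\}$ is a null analytic set. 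Off $Z$, $G$ is a local diffeomorphism, so $G^{-1}(N)$ is null for every null $N$. Iterating, $G^{-t_0}(N)$ is null for every $t_0$.

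The core step is a local statement. Fix the small neighborhood $\mathcal U$ of $(0,-1)$ from \cref{prop:halbey-bit-law}. Let $W$ be the set of states in $\mathcal U$ whose forward orbit stays in $\mathcal U$, never reaches $p$, and satisfies \eqref{eq:halbey-slow-envelope} for all $t\ge0$. I would show that $W$ lies in the graph of a function $\theta=h(r)$, so $W$ is null in the plane.

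\begin{itemize}
\item Take two states in $W$ with the same $r_0$ (or, more generally, whose difference lies in a transverse cone about $(0,1)$).
\item In the eigenbasis of the limit matrix, the Jacobians along the segment are $\operatorname{diag}(1,-2)+O(r_t)$. By the cone invariance from the proof of \cref{prop:halbey-bit-law}, applied via the mean value theorem to the segment joining the two states, their difference stays in the cone.
\item The transverse component then grows like $2^T(T+1)^{-C}$.
\item Both orbits remain in $\mathcal U$, which is bounded, so the difference must be zero.
\end{itemize}

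A point of $\mathcal E$ eventually enters $W$ after some time $t_0$, with $t_0$ ranging over a countable set. After the reindexing of \cref{prop:halbey-bit-law}, this gives $\mathcal E\subseteq\bigcup_{t_0}G^{-t_0}(\Phi^{-1}(\operatorname{graph}h))$. Here $\Phi$ is the smooth polar chart $x\mapsto(r,\theta)$, which is a local diffeomorphism off the null exceptional set. This union is null by the first paragraph.

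For uncountability I would run a shooting argument in the other direction. Fix $r_0$ in a small interval and consider the vertical segment $\{r_0\}\times[-1+r_0/3-Kr_0^2,\,-1+r_0/3+Kr_0^2]$. The transverse expansion by $-2$ stretches the image of each subsegment across the allowed slow strip around \eqref{eq:halbey-slow-curve}. The strip is chosen so that it implies \eqref{eq:halbey-slow-envelope}, by expanding $s$ from \eqref{eq:halbey-polar-map}. Nested compact subsegments, whose images stay in the strip up to time $T$, then have nonempty intersection. This yields at least one $\theta_0=h(r_0)$ with an infinite slow orbit. Distinct $r_0$ give distinct points $x_0=p+r_0u$, and these lie in the disk because $u\approx-p$. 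This produces a continuum of initializations.

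Finally, the rate \eqref{eq:halbey-eventual-rate} follows from the radial recurrence. Since $r_{t+1}=r_t(1-\tfrac43r_t+c_tr_t^2)$ with $c_t$ bounded, we get $1/r_{t+1}=1/r_t+\tfrac43+O(r_t)$. This first gives $r_t=\Theta(1/t)$, and then, by summing, $1/r_t=\tfrac43t+O(\log t)$, so $tr_t\to3/4$. Since $f(x_t)-f(p)=r_t^2$, we obtain $t^2(f(x_t)-f(p))\to9/16$.

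I expect the main obstacle to be the graph property of $W$. The tangent direction is neutral (eigenvalue $1$ with $O(1/t)$ perturbations), so standard stable/unstable manifold theorems do not apply directly. One has to check that the cone is forward-invariant for differences of two genuine orbits, not only for products of Jacobians along one orbit. Both orbits have $r_t=\Theta(1/t)$, so I expect the $O(1/t)$ error terms to be uniform along the segment joining them, but this needs verification.
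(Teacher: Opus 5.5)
Your proposal is correct in outline and follows a genuinely different route through the key nullity step. The paper works in the wide wedge $W=\{0<r\le\tfrac1{100},\ \tfrac r4\le q\le\tfrac{2r}5\}$ with $q=1+\theta$. It shows that the map is injective there with $|\det D\Phi|>\tfrac65$, and concludes by change of variables that the starts remaining $n$ steps in the wedge have area at most $(\tfrac56)^n\operatorname{area}(W)$. A separate lemma shows that the eventual envelope holds if and only if the orbit eventually stays in this wedge, and the same wedge carries the nested-interval existence argument.

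You instead work in the thin strip $\theta=-1+\tfrac r3+\beta r^2$, $|\beta|\le B$. You use the transverse expansion behind \cref{prop:halbey-bit-law} to show that each radius carries at most one slow initial angle, and then apply Fubini. This avoids injectivity and any determinant estimate. It also gives a sharper structural conclusion: the slow set is a graph over the radius, which ties the measure-zero statement directly to the $2^T$ sensitivity. The paper's argument, by contrast, is quantitative with hand-checkable rational constants.

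The step you flagged does go through:
the expansions of $s,s_r,s_\theta$ give $DF=J_\infty+O(r)$ uniformly for $|\beta|\le B$. Both orbits satisfy $\kappa_1t\le1/r_t-1/r_0\le\kappa_2t$, so their radii are comparable and the joining segment lies in a strip with a larger fixed $B$. Hence the integral mean-value matrix is $J_\infty+O(r_t)$, the cone gives $|b_{t+1}|\ge\tfrac32|b_t|$, and since both orbits converge to $(0,-1)$ you get $b_0=0$. Your rate derivation from $1/r_{t+1}-1/r_t=\tfrac43+O(r_t)$ is more direct than the paper's. Using analyticity in place of the explicit determinant (which vanishes only on $q=r/(1+r)$) for the preimage step is also fine.

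Two details need repair.

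First, the envelope at a single step does not force $q_0\approx r_0/3$. With $k=q/r$ one has $s=1-\tfrac{(1+k)^2}{4k}r+O(r^2)$, and $k=3$ gives the same leading term as $k=\tfrac13$. So your $W$ as defined may contain a second branch $q_0\approx3r_0$ at the same radii. Segments joining that branch to the slow branch leave the region where $DF\approx J_\infty$. Impose the strip condition already at time $0$ (the extra forward step in \cref{prop:halbey-bit-law}). Note also that $W$ is Borel, which is what lets containment in a graph imply nullity.

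Second, in the shooting step the strip must be centered on the second-order slow curve, not on $\theta=-1+\tfrac r3$ as your initial segment suggests. One step acts as $\beta\mapsto-2\beta-\tfrac23+O(r)$, with fixed point $-\tfrac29$, and the envelope coefficient is $c=2\beta+\tfrac{22}9+O(r)$. A strip $|\beta|\le K$ is stretched across itself only if $K>\tfrac23$, but it implies $c\le\tfrac52$ only if $K\le\tfrac1{36}$, so no such $K$ works. A strip $|\beta+\tfrac29|\le w$ with $w<\tfrac14$ is stretched across itself and gives $|c-2|\le2w+O(r)<\tfrac12$ for small $r$. Your segment $|\beta|\le K$ still serves as the starting segment if $K>\tfrac29+\tfrac w2$. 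Alternatively, shoot in a wide wedge as the paper does and derive the eventual envelope from $\beta_t\to-\tfrac29$.
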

\begin{proof}[Proof Sketch]
The proof can be found in \cref{app:halbey-measure-zero}. With $q\doteq1+\theta$,
the map is injective on
\[
  W\doteq\{(r,q):0<r\le1/100,\ r/4\le q\le2r/5\}
\]
and expands $(r,q)$ area by a factor greater than $6/5$. The area of starts
remaining there for $n$ updates is therefore at most
$\operatorname{area}(W)(5/6)^n$. The envelope forces eventual membership in
$W$, and finite preimages and Cartesian coordinate changes preserve nullity.
A nested-interval argument supplies such a trajectory at every initial
radius in $(0,1/100]$, so the statement is not vacuous.
\end{proof}

For exact line search, or equivalently short steps on this instance, an initial point sampled from a distribution with a density on the disk almost surely lies outside $\mathcal E$ and therefore does not generate an infinite trajectory satisfying the eventual slow-envelope condition. We establish the existence of such trajectories, together with the measure-zero statement, in \cref{thm:halbey-measure-zero}. The stated guarantee in \citet{halbey2026lower} is finite-horizon, with an initialization that may depend on the horizon. This does not rule out long finite slow phases, and \citet{halbey2026lower}'s finite-horizon lower bound is independent of this. Nor does the result establish that every initialization whose trajectory has $\Omega(t^{-2})$ error belongs to $\mathcal E$.

Neither the local sensitivity estimate nor \cref{thm:halbey-measure-zero}
determines the measure of all hard initializations.
Let $T(x_0,\eps)$ be the first time the objective gap is at most $\eps$. For
fixed $\kappa>0$, define
\begin{equation}\label{eq:halbey-hard-set}
  H_\eps(\kappa)
  \doteq\left\{x_0\in B_1(0):T(x_0,\eps)\ge\frac{\kappa}{\sqrt\eps}\right\},
  \qquad
  q_\eps(\kappa)
  \doteq\frac{\operatorname{vol}(H_\eps(\kappa))}
         {\operatorname{vol}(B_1(0))}.
\end{equation}
The finite-horizon strict inequalities define a relatively open set by
continuity. Thus a strict hard point gives a positive-measure subset of
$H_\eps(\kappa)$ and $q_\eps(\kappa)>0$ at the relevant fixed accuracies.
This supplies no lower bound uniform as $\eps\downarrow0$. Independent
uniform starts sampled until one lies in $H_\eps(\kappa)$ have a geometric
waiting time with mean $1/q_\eps(\kappa)$. A limit
$q_\eps(\kappa)\to0$ would therefore imply a diverging expected number of
trials. That limit is unknown: in particular, a positive limit superior or nonconvergence has not been ruled out. 

The inverse recurrence explains why the local sensitivity and nonlinear
perturbation bounds do not settle this measure question. A valid
nonterminating next state $(R,\Theta)$, with $R>0$, determines
$s=-\Theta-R$ and $r=R/s$, but the preceding angle may have two values:
\begin{equation}\label{eq:halbey-two-preimages}
  \theta_\pm
  =-s^2(1+r)
   \pm\sqrt{(1-s^2)\bigl(1-(1+r)^2s^2\bigr)}.
\end{equation}
There can be as many as $2^T$ histories after $T$ inverse steps, whereas the
width of one slow inverse branch is $2^{-T}T^{O(1)}$. Schematically,
\[
  \underbrace{2^T}_{\text{possible branches}}
  \times
  \underbrace{2^{-T}}_{\text{width per branch}}
  \asymp1.
\]
The elementary doubling map exhibits this cancellation. For the
Frank--Wolfe map, a proof that $q_\eps(\kappa)\to0$ would additionally need a
global estimate on escape or on the measures of the inverse histories that
remain hard.

\subsection{Proofs for the \citep{halbey2026lower} calculation}
\label{app:halbey-precision}

In this appendix we derive the exact recurrence and prove the variable-Jacobian
estimate in \cref{prop:halbey-bit-law}. We then consider additive
nonlinear perturbations and prove \cref{thm:halbey-measure-zero}
for the eventual slow envelope. Some of the computations are already contained in \citep{halbey2026lower} while others are relatively straightforward but tedious. We include both in detail for the sake of completeness.

\subsubsection{Exact polar dynamics}

Fix $\|p\|_2=1$ and $f(x)\doteq\|x-p\|_2^2$ on the unit ball. For $x\ne p$, write
\[
  e\doteq x-p=ru,
  \qquad r\doteq\|e\|_2,
  \qquad \theta\doteq\langle u,p\rangle.
\]
The LMO minimizes $\langle e,v\rangle$ over $\|v\|_2\le1$ and returns
$v=-u$. Its displacement from the current iterate is
\[
  d\doteq v-x=-u-(p+ru)=-(1+r)u-p.
\]
Along $x+\gamma d$, exact line search minimizes $\|e+\gamma d\|_2^2$.
The unconstrained minimizer is $-\langle e,d\rangle/\|d\|_2^2$, and
\[
  \langle e,d\rangle=-r(1+r+\theta),
  \qquad
  D(r,\theta)\doteq\|d\|_2^2
  =(1+r)^2+1+2(1+r)\theta.
\]
Thus
\begin{equation}\label{eq:app-gamma}
  \gamma(r,\theta)=\frac{r(1+r+\theta)}{D(r,\theta)}.
\end{equation}
For the noncollinear feasible states in the slow construction this step-size belongs to $(0,1)$, so clipping to the Frank--Wolfe segment has no effect and we can ignore it.

Set $e^+\doteq e+\gamma d$, $r^+\doteq\|e^+\|_2$, and
$s\doteq r^+/r$. Substitution gives
\[
  s^2=1-\frac{\langle e,d\rangle^2}{r^2\|d\|_2^2}
  =1-\frac{(1+r+\theta)^2}{D(r,\theta)}
  =\frac{1-\theta^2}{D(r,\theta)}.
\]
Exact line search also makes $e^+$ orthogonal to $d$, so
\[
  0=\langle e^+,d\rangle
   =-(1+r)\langle e^+,u\rangle-
     \langle e^+,p\rangle.
\]
Direct substitution of \eqref{eq:app-gamma} gives
$\langle e^+,p\rangle=-r(1+r)s^2$. Dividing by $r^+=rs$ gives
$\theta^+=-(1+r)s$. Hence the exact recurrence is
\[
  F(r,\theta)=\bigl(rs,-(1+r)s\bigr),
  \qquad
  s^2=\frac{1-\theta^2}{D(r,\theta)},
\]
as used in \cref{sec:halbey-diagnosis}.

\subsubsection{The slow curve and its Jacobian}

Assume the slow-envelope hypothesis from \citet{halbey2026lower}
\[
  s_t=1-\frac43r_t+c_tr_t^2,
  \qquad 1\le c_t\le\frac52.
\]
Since $r_{t+1}=r_ts_t$,
\begin{equation}\label{eq:app-reciprocal-increment}
  \frac1{r_{t+1}}-\frac1{r_t}
  =\frac{1-s_t}{r_ts_t}
  =\frac{4/3-c_tr_t}{s_t}.
\end{equation}
For sufficiently small fixed radii, the last expression is bounded above and
below by positive constants uniformly over $c_t\in[1,5/2]$. After reindexing
at entry into this neighborhood, summation gives
$r_t=\Theta((t+1)^{-1})$, with the fixed entry-scale bounds from the
proposition.

The angular recurrence gives
\begin{align*}
  \theta_{t+1}
  &=-(1+r_t)s_t\\
  &=-1+\frac13r_t+
    \left(\frac43-c_t\right)r_t^2-c_tr_t^3.
\end{align*}

Since $r_{t+1}=r_t+O(r_t^2)$, after one forward step this is
\[
  \theta_{t+1}=-1+\frac{1}{3} r_{t+1} + O(r_{t+1}^2).
\]
The forward recurrence therefore determines the coefficient $1/3$.

We now differentiate at a generic state $(r,\theta)$, where
$r=\|x-p\|_2>0$ and $\theta=\langle(x-p)/r,p\rangle\in(-1,1)$.
Recall the scalar denominator and write the contraction factor as a function
of these coordinates:
\begin{align*}
  D(r,\theta)&\doteq(1+r)^2+1+2(1+r)\theta=\|d\|_2^2,\\
  s(r,\theta)&\doteq\sqrt{\frac{1-\theta^2}{D(r,\theta)}}.
\end{align*}
Along the trajectory, $s_t=s(r_t,\theta_t)$. Write
$s_r\doteq\partial s/\partial r$ and
$s_\theta\doteq\partial s/\partial\theta$, holding the other coordinate fixed.
Taking logarithms gives
\[
  2\log s=\log(1-\theta^2)-\log D.
\]
Differentiating this identity with respect to $r$ and $\theta$ separately,
using $\partial_r\log s=s_r/s$ and
$\partial_\theta\log s=s_\theta/s$, gives
\begin{align}
  s_r
  &=-s\frac{1+r+\theta}{D},
  \label{eq:app-s-r}\\
  s_\theta
  &=-s\left(
      \frac{\theta}{1-\theta^2}+\frac{1+r}{D}
    \right).
  \label{eq:app-s-theta}
\end{align}
Thus
\begin{equation}\label{eq:app-full-jacobian}
  DF(r,\theta)
  =\begin{pmatrix}
      s+rs_r & rs_\theta\\
      -s-(1+r)s_r & -(1+r)s_\theta
    \end{pmatrix}.
\end{equation}
For the slow-curve substitution, set
$\beta\doteq(\theta+1-r/3)/r^2$; the preceding forward-step expansion gives a
fixed bound $|\beta|\le B$. 
Substituting $\theta=-1+r/3+\beta r^2$ in
\eqref{eq:app-s-r}--\eqref{eq:app-s-theta} yields
\[
  s=1+O(r),
  \qquad s_r=-2+O(r),
  \qquad s_\theta=2+O(r).
\]
Here all three remainders are bounded by $Cr$ in absolute value for
$0<r\le\delta$ and $|\beta|\le B$, with $C,\delta>0$ depending only on $B$.
Consequently,
\begin{equation}\label{eq:app-jacobian-limit}
  DF(r_t,\theta_t)
  =J_\infty+O(r_t),
  \qquad
  J_\infty\doteq\begin{pmatrix}1&0\\1&-2\end{pmatrix}.
\end{equation}
Its eigenvectors satisfy
\[
  J_\infty(3,1)^\top=(3,1)^\top,
  \qquad
  J_\infty(0,1)^\top=-2(0,1)^\top.
\]
The first is tangent to $\theta=-1+r/3$; the second changes the angle while
fixing the residual radius $r$ to first order.

\subsubsection{Products of the varying Jacobians}

\begin{proof}[Proof of \cref{prop:halbey-bit-law}]
\Cref{eq:app-reciprocal-increment} gives
$r_t=\Theta((t+1)^{-1})$. By \eqref{eq:app-jacobian-limit},
\[
  J_t\doteq DF(r_t,\theta_t)
  =J_\infty+O((t+1)^{-1}).
\]
Let
\[
  P\doteq\begin{pmatrix}3&0\\1&1\end{pmatrix}
\]
be the fixed eigenvector matrix of the limiting matrix $J_\infty$ and observe that $P^{-1}J_\infty P=\operatorname{diag}(1,-2)$. There are fixed $K,t_0>0$ such that
\[
  P^{-1}J_tP=\operatorname{diag}(1,-2)+E_t,
  \qquad
  |(E_t)_{ij}|\le\epsilon_t
  \doteq\frac{K}{t+t_0}.
\]
Taking a smaller fixed entry neighborhood if needed increases $t_0$ and
ensures $\epsilon_t\le1/4$ at each reindexed step.

Let us now write a linearized perturbation in the fixed eigenbasis as
$h_t=P(a_t,b_t)^\top$. Its update $h_{t+1}=J_th_t$ has coefficient form
\[
  \begin{pmatrix}a_{t+1}\\b_{t+1}\end{pmatrix}
  =P^{-1}J_tP\begin{pmatrix}a_t\\b_t\end{pmatrix}.
\]
Consider the cone $\mathcal C\doteq\{(a,b)\in\R^2:|a|\le|b|\}$
in coefficient space. For $(a_t,b_t)\in\mathcal C$, the entrywise bounds give
\[
  |a_{t+1}|\le(1+2\epsilon_t)|b_t|,
  \qquad
  |b_{t+1}|\ge(2-2\epsilon_t)|b_t|.
\]
Since $\epsilon_t\le1/4$, the first bound is no larger than the second, so
$\mathcal C$ is invariant. Starting from $(a_0,b_0)=(0,1)$ gives
\begin{equation}\label{eq:app-cone-lower}
  |b_T|
  \ge2^T\prod_{t<T}(1-\epsilon_t).
\end{equation}
For $0\le u\le1/2$, $\log(1-u)\ge-2u$. Since
$\sum_{t<T}\epsilon_t=O(\log(T+1))$, the product is bounded below by a
negative power of $T+1$. Changing coordinates back through $P$ yields
\[
  \|J_{T-1}\cdots J_0\|_2
  \ge a2^T(T+1)^{-C}.
\]
For the upper bound, the induced maximum norm satisfies
\[
  \|P^{-1}J_tP\|_\infty
  \le2(1+\epsilon_t).
\]
Multiplication and $\log(1+u)\le u$ yield
\[
  \|J_{T-1}\cdots J_0\|_2
  \le A2^T(T+1)^C.
\]
Equivalence of norms changes only the constants $a,A,C$, proving
\eqref{eq:halbey-one-bit-law}.
\end{proof}

For each fixed $T$, differentiability gives the uncertainty interpretation
used in \eqref{eq:halbey-float-bits}:
\[
 \lim_{u\downarrow0}\frac1u
 \sup_{\|h\|_2\le u}\max_{0\le t\le T}
 \|F^t((r_0,\theta_0)+h)-F^t(r_0,\theta_0)\|_2
 =\max_{0\le t\le T}\|D(F^t)(r_0,\theta_0)\|_2.
\]
The right side is $2^TT^{O(1)}$ by the proposition. Its binary logarithm gives
the linearized precision requirement. 

The same leading exponent holds in Cartesian coordinates. Rotate to
$p=(1,0)$ and fix one side of the invariant plane. The coordinate map is
\[
  (r,\theta)\longmapsto
  x\doteq(1+r\theta,\,r\sqrt{1-\theta^2}).
\]
Its Jacobian columns are orthogonal, with singular values
$1$ and $r/\sqrt{1-\theta^2}$. Along the slow curve the latter is
$\Theta(\sqrt r)=\Theta(t^{-1/2})$. Thus coordinate conversion contributes
only polynomial factors, changing the required number of binary digits by
$O(\log T)$.

\subsubsection{Nonlinear bounds for additive state perturbations}

We next consider a nonlinear additive perturbation error model of the two polar
state coordinates. 

\begin{theorem}[Finite-horizon additive perturbation bound]
\label{thm:halbey-nonlinear-tube}
Let $z_t\doteq(r_t,\theta_t)$ be a slow segment from
\cref{prop:halbey-bit-law}, reindexed after entering a sufficiently
small fixed neighborhood of $(0,-1)$. There are constants
$c,\eta_0,\Gamma>0$, independent of $T$, with the following property. For
every integer $T\ge1$ and $0<\eta\le\eta_0$, let
\[
  \widetilde z_0\doteq z_0+e_0,
  \qquad
  \widetilde z_{t+1}\doteq F(\widetilde z_t)+\xi_t,
  \quad 0\le t<T,
\]
where $e_0,\xi_t\in\mathbb R^2$ may be chosen adversarially. If
\begin{equation}\label{eq:nonlinear-total-budget}
  \|e_0\|_\infty+
  \sum_{t=0}^{T-1}\|\xi_t\|_\infty
  \le c\eta\,2^{-T}(T+1)^{-\Gamma},
\end{equation}
then every perturbed state is in the domain of the polar map and
\begin{equation}\label{eq:nonlinear-shadow}
  \|\widetilde z_t-z_t\|_\infty
  \le \eta r_t^3
  =O\!\left(\frac{\eta}{(t+1)^3}\right),
  \qquad 0\le t\le T.
\end{equation}
In particular, \eqref{eq:nonlinear-total-budget} follows from
\[
  \|e_0\|_\infty
  \le \frac{c\eta}{2}\,2^{-T}(T+1)^{-\Gamma},
  \qquad
  \max_{t<T}\|\xi_t\|_\infty
  \le \frac{c\eta}{2}\,2^{-T}(T+1)^{-(\Gamma+1)}.
\]
\end{theorem}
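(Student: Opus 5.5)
The plan is a discrete Gronwall (tube) argument in the fixed eigenbasis of \cref{prop:halbey-bit-law}. Write $\widetilde z_t=z_t+h_t$ and set $k_t\doteq P^{-1}h_t=(a_t,b_t)$. The perturbed recurrence then reads
\[
 h_{t+1}=J_th_t+R_t(h_t)+\xi_t,
 \qquad
 R_t(h)\doteq F(z_t+h)-F(z_t)-J_th .
\]
The remainder $R_t$ is controlled by the second derivatives of $F$ near $z_t$. From the explicit formulas \eqref{eq:app-s-r}--\eqref{eq:app-full-jacobian}, the only possible singularities are at $\theta=\pm1$ and where $D$ or $s$ degenerate. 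Near the slow curve the factor $1-\theta^2$ is of order $r_t$, which is small; this is the delicate point.

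The first step is a local Hessian bound: on the box $\{z:\|z-z_t\|_\infty\le r_t^{3}\}$ we have $\|D^2F\|\le Kr_t^{-q}$ for a fixed $q$. I expect $q=1$ or $3/2$, since $s$ involves $\sqrt{1-\theta^2}$ with $1-\theta^2\asymp r$, and derivatives of $\sqrt{\cdot}$ bring powers $r^{-1/2}$ each. The box radius $r_t^3$ is much smaller than the distance $\asymp r_t$ to the singular set $\theta=-1$, so every state in the box lies in the domain of the polar map. This gives
\[
 \|R_t(h)\|\le Kr_t^{-q}\|h\|^2\le K\eta r_t^{3-q}\|h\|
 \qquad\text{whenever } \|h\|\le\eta r_t^3 .
\]
The $3$ in the target tube is chosen so that this relative error is $O(\eta r_t)=O(\eta/(t+1))$, which is summable against $\prod(1+O(1/t))$ up to polynomial factors.

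The second step is the linear propagation. Inside the tube, the coefficient map is $\operatorname{diag}(1,-2)+E_t+E'_t$ with $\|E'_t\|\le K\eta r_t^{3-q}$. This is a nonlinear correction of the same type as $E_t$, at most $O(1/(t+t_0))$ once $\eta\le\eta_0$. The operator-norm upper bound from the proof of \cref{prop:halbey-bit-law} therefore applies verbatim and gives, for $s\le t$,
\[
 \|k_t\|\le A\,2^{t-s}\Bigl(\tfrac{t+1}{s+1}\Bigr)^{C}\|\text{input at }s\|.
\]
By Duhamel summation,
\[
 \|h_t\|\le A'\,2^{t}(t+1)^{C}\Bigl(\|e_0\|+\sum_{s<t}\|\xi_s\|\Bigr)
 \le A'c\,\eta\,2^{t-T}(T+1)^{C-\Gamma}.
\]

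The third step is the bootstrap. Induct on $t\le T$ with the hypothesis $\|h_s\|_\infty\le\eta r_s^3$ for all $s\le t$, which is what makes the Hessian bound usable at step $t$. The Duhamel bound gives
\[
 \|h_{t+1}\|\le A'c\eta(T+1)^{C-\Gamma}.
\]
Since $r_{t+1}^3\ge c'(T+1)^{-3}$ by $r_t=\Theta(1/(t+1))$, choosing $\Gamma\ge C+3$ and $c$ small closes the induction with room to spare. Norm equivalence through $P$ only changes constants. The final "in particular" claim is immediate: $T$ terms, each at most $\frac{c\eta}{2}2^{-T}(T+1)^{-\Gamma-1}$, sum to at most half the budget.

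The main obstacle is the first step. One must verify precisely the blow-up exponent $q$ of $D^2F$ near $\theta=-1$ along the curve $1+\theta\approx r/3$. One must also check that perturbations keep $\widetilde\theta>-1$ and keep $D>0$, which holds since $D\approx 2(1+\theta)+r^2+\dots\asymp r$. I would compute $D^2F$ by differentiating \eqref{eq:app-s-r}--\eqref{eq:app-s-theta} once more and bound every term by powers of $1/(1-\theta^2)$ and $1/D$, each $\asymp r^{-1}$. If $q$ turns out to exceed $3$, I would raise the tube exponent from $3$ to $q+1$. That changes only $\Gamma$ and the stated rate, not the argument.
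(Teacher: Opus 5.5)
Your proposal is correct and is essentially the paper's proof: a tube of radius $\eta r_t^3$, a linearization around $z_t$ whose nonlinear correction is $O(\eta r_t)$ relative to $h_t$ in the eigenbasis $P$, the $2^{t-j}$-times-polynomial product bound, variation of constants, and a first-exit bootstrap with $\Gamma$ equal to the polynomial exponent plus $3$ and $c$ small. The paper uses the crude bound $\|D^2F\|\le Kr^{-2}$, since $1-\theta^2$ and $D$ are both comparable to $r$ with bounded derivatives, and blow-up exponent $2$ is exactly the largest your tube exponent $3$ tolerates, so your fallback should trigger for exponents above $2$ rather than above $3$; this is moot, because a direct computation gives the sharp exponent $1$, as you guessed.
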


\begin{proof}
Put $q\doteq1+\theta$. This translates the second state coordinate, leaving
differences and derivative matrices unchanged. Along the slow segment,
$q_t=r_t/3+O(r_t^2)$ and $r_t=\Theta((t+1)^{-1})$. After shrinking the fixed
entry neighborhood, choose constants $0<a<b$ such that
\[
  ar_t\le q_t\le br_t
\]
at every reindexed state. The numerator and denominator of the contraction
factor are
\begin{equation}\label{eq:wedge-numerator-denominator}
  1-\theta^2=2q-q^2,
  \qquad
  D(r,\theta)=r^2+2(1+r)q.
\end{equation}
Use a larger fixed radial cutoff $r_{\max}\doteq2r_+$, with $r_+$ sufficiently
small, and consider
$0<r\le r_{\max}$, $ar/2\le q\le2br$. This cutoff is distinct from the
actual initial radius $r_0$. Both quantities in
\eqref{eq:wedge-numerator-denominator} are bounded above and below by positive
constant multiples of $r$ throughout this larger neighborhood.

In the $(r,q)$ coordinates, with the output's second coordinate translated
as well, the same map is represented by
\[
  F(r,q)=\left(
    r\sqrt{\frac{2q-q^2}{r^2+2(1+r)q}},
    1-(1+r)\sqrt{\frac{2q-q^2}{r^2+2(1+r)q}}
  \right).
\]
Differentiating twice gives
\begin{equation}\label{eq:nonlinear-hessian-bound}
  \|D^2F(r,q)\|_\infty\le K r^{-2}
\end{equation}
for a fixed $K$. The first two derivatives of the polynomial numerator and
denominator are bounded; differentiating their square-root quotient twice
introduces at worst two reciprocal powers of quantities comparable to $r$.

Write $h_t\doteq\widetilde z_t-z_t$ and provisionally assume
$\|h_t\|_\infty\le\eta r_t^3$. For sufficiently small $\eta_0$, the segment
from $z_t$ to $z_t+h_t$ stays in the larger neighborhood, with radius
comparable to $r_t$. To see the margins, $r_t\le r_+$ leaves room up to
$r_{\max}=2r_+$ even for outward perturbations at time zero, while
$\|h_t\|_\infty/r_t\le\eta_0r_+^2$ is small relative to the fixed differences
between $ar_t\le q_t\le br_t$ and the enlarged angular bounds. The
integral mean-value formula therefore gives
\[
  F(z_t+h_t)-F(z_t)=A_t h_t,
  \qquad
  A_t=DF(z_t)+R_t,
  \qquad
  \|R_t\|_\infty\le K' r_t^{-2}\|h_t\|_\infty
  \le K'\eta r_t.
\]
In the fixed basis $P$ used above,
\[
  P^{-1}A_tP=\operatorname{diag}(1,-2)+\widetilde E_t,
  \qquad
  \|\widetilde E_t\|_\infty\le\frac{K''}{t+1}.
\]
The induced maximum norm and
$\prod_{s=j}^{t-1}(1+K''/(s+1))\le K_0(t+1)^{K''}$ yield fixed
$K_1,K_2>0$ such that every product formed while the provisional bound holds
satisfies
\begin{equation}\label{eq:nonlinear-product-upper}
  \|A_{t-1}\cdots A_j\|_\infty
  \le K_1 2^{t-j}(t+1)^{K_2}.
\end{equation}

Variation of constants in $h_{t+1}=A_th_t+\xi_t$ gives
\[
  \|h_t\|_\infty
  \le K_3 2^t(t+1)^{K_2}
  \left(\|e_0\|_\infty+
        \sum_{j=0}^{t-1}\|\xi_j\|_\infty\right).
\]
Take $\Gamma\doteq K_2+3$. The bound $r_t\ge c_r/(t+1)$ for a fixed $c_r>0$ and
\eqref{eq:nonlinear-total-budget} imply
\[
  \frac{\|h_t\|_\infty}{\eta r_t^3}
  \le \frac{K_3c}{c_r^3}
      2^{t-T}
      \left(\frac{t+1}{T+1}\right)^{K_2+3}
  \le \frac{K_3c}{c_r^3}.
\]
Choose $c<c_r^3/K_3$. A first-exit argument proves the provisional bound for
all $0\le t\le T$ and keeps every perturbed state in the larger neighborhood.
This gives \eqref{eq:nonlinear-shadow}. Finally, the separate bounds imply
\eqref{eq:nonlinear-total-budget} because $T/(T+1)\le1$.
\end{proof}

The fixed lower bound on the starting distance to the optimizer $r_-$ from
\cref{prop:halbey-bit-law} remains part of the theorem. Its
constants need not be uniform when the starting radius tends to zero
independently of $T$; already at $t=0$ the allowed error is $\eta r_0^3$.
The theorem only provides a sufficient bound for the displayed additive-error model.
\subsubsection{Zero area for the eventual slow envelope}
\label{app:halbey-measure-zero}

We prove \cref{thm:halbey-measure-zero}. Rotate so $p=(1,0)$ and put
$q\doteq1+\theta$. The coordinate maps $\chi_\pm$ recover the original
Cartesian point $x$ from $(r,q)$, with $+$ and $-$ selecting the upper and
lower half-disks, respectively:
\begin{equation}\label{eq:halbey-measure-chart}
  \chi_\pm(r,q)\doteq\bigl(1-r+rq,\ \pm r\sqrt{2q-q^2}\bigr).
\end{equation}
The interior of the disk, excluding the horizontal axis, corresponds to
\[
  \mathcal D\doteq\{(r,q):0<r<2,\ 0<q<1-r/2\}.
\]
Translating the second coordinate of \eqref{eq:halbey-polar-map} gives
\begin{equation}\label{eq:halbey-measure-map}
  \Phi(r,q)\doteq(rs,1-(1+r)s),\qquad
  s^2=\frac{N}{D},\quad
  N\doteq2q-q^2,\quad D\doteq r^2+2(1+r)q.
\end{equation}
The exact line-search parameter lies in $(0,1)$ throughout $\mathcal D$:
its numerator is $r(r+q)>0$, and its denominator exceeds that numerator
by $(r+2)q>0$. Thus clipping does not change this map. Moreover,
\begin{equation}\label{eq:halbey-measure-radial}
  D-(1+r)^2N=((1+r)q-r)^2\ge0.
\end{equation}
Hence $s\le1/(1+r)$ and $r_t\le(t+1/r_0)^{-1}$ until termination;
in particular, $r_t\to0$.

Put
\[
  \delta\doteq\frac1{100},\qquad a\doteq\frac14,\qquad b\doteq\frac25,
  \qquad
  W\doteq\{(r,q):0<r\le\delta,\ ar\le q\le br\}\subset\mathcal D.
\]
Let $W_0\doteq W$ and recursively define
\[
  W_{n+1}\doteq\{z\in W:\Phi(z)\in W_n\},\qquad
  W_\infty\doteq\bigcap_{n\ge0}W_n.
\]
Thus $W_n$ requires the first $n+1$ states, including the initialization,
to lie in $W$. Write $\mathcal L^2_{r,q}$ for ordinary area in the $(r,q)$
parameter plane, and $\mathcal L^2$ for ordinary area in the original
$(x_1,x_2)$ plane. The maps $\chi_\pm$ do not preserve area; their absolute
Jacobian determinants account for the change in area.

\begin{lemma}[Area of initializations remaining in $W$]
\label{lem:halbey-measure-area}
The map $\Phi$ is injective on $W$, and
\[
  \mathcal L^2_{r,q}(W_n)
  \le\frac{3\delta^2}{40}\left(\frac56\right)^n.
\]
In particular $\mathcal L^2_{r,q}(W_\infty)=0$.
\end{lemma}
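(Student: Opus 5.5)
The plan is to show that $\Phi$ is an injective $C^1$ map on $W$ whose Jacobian determinant satisfies $|\det D\Phi|\ge 6/5$ there, and then run the standard expanding-map area argument. For the base case, $\mathcal L^2_{r,q}(W)=\int_0^\delta (b-a)r\,dr=(b-a)\delta^2/2=\frac{3}{20}\cdot\frac{\delta^2}{2}=\frac{3\delta^2}{40}$, which matches the stated constant at $n=0$.

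\textbf{Injectivity.} I would use the inverse recurrence \eqref{eq:halbey-two-preimages}. From $(R,Q)=\Phi(r,q)$ one recovers $s=1-Q-R$ (since $\Theta=Q-1$) and $r=R/s$. The angle then has at most the two candidates $\theta_\pm$. So it suffices to show that only one of them is compatible with $q\in[ar,br]$, i.e.\ with $\theta$ near $-1+r/3$. Writing $\theta_\pm+1$ to leading order in $r$ on $W$, one root should be $\approx r/3$ and the other of order $1$ or negative. Using $\delta=1/100$, explicit inequalities should separate them, which gives injectivity. An alternative is a direct monotonicity argument: for fixed $r$, the value $s$ is strictly monotone in $q$ on $[ar,br]$, and $r$ is then recovered from $R=rs$.

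\textbf{Determinant bound.} This is the main obstacle. By \eqref{eq:app-full-jacobian}, $\det D\Phi = (s+rs_r)\cdot(-(1+r)s_q) - rs_q\cdot(-s-(1+r)s_r)$. After simplification this reduces to $\det D\Phi = -s\,s_q$, and the translation $q=1+\theta$ leaves derivatives unchanged. From \eqref{eq:app-s-theta} and \eqref{eq:wedge-numerator-denominator},
\[
-s\,s_q=s^2\left(\frac{\theta}{1-\theta^2}+\frac{1+r}{D}\right)=\frac{N}{D}\left(\frac{q-1}{N}+\frac{1+r}{D}\right)=\frac{q-1}{D}+\frac{(1+r)N}{D^2}.
\]
Its absolute value is therefore $\bigl((1-q)D-(1+r)N\bigr)/D^2$. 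To leading order this is $\bigl(r^2+2q-2q\bigr)/D^2\approx r^2/(r^2+2q)^2$. With $q=\kappa r$ and $\kappa\in[1/4,2/5]$, the ratio is $\approx 1/(2\kappa)^2\ge 1/(4/5)^2=25/16>6/5$. I would then bound the $O(r)$ corrections explicitly using $r\le 1/100$, keeping the margin $25/16-6/5$ to absorb them. This bookkeeping is tedious but routine, and the choice $b=2/5$ is presumably made exactly so that the margin survives.

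\textbf{Area recursion.} Since $W_{n+1}=W\cap\Phi^{-1}(W_n)$, the map $\Phi$ sends $W_{n+1}$ injectively into $W_n$. The change-of-variables inequality for injective $C^1$ maps gives
\[
\mathcal L^2_{r,q}(W_n)\ge\int_{W_{n+1}}|\det D\Phi|\ge\tfrac65\,\mathcal L^2_{r,q}(W_{n+1}).
\]
Induction from the base value $3\delta^2/40$ gives the stated bound. Finally, $W_\infty\subseteq W_n$ for every $n$, so its area is zero. Measurability holds because each $W_n$ is closed relative to $W$, by continuity of $\Phi$.
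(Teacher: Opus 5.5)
Your route matches the paper's: the identity $\det D\Phi=-s\,s_q$, a uniform bound $|\det D\Phi|>6/5$ on $W$, injectivity by recovering $s=1-Q-R$ and $r=R/s$ and then using monotonicity in $q$, and the change-of-variables recursion from the base area $3\delta^2/40$. The problem is a miscalculation in the step you call the main obstacle. Your exact expression is correct and factors as
\[
 (1-q)D-(1+r)N \;=\; r^2-qr^2-(1+r)q^2 \;=\; \bigl(r-(1+r)q\bigr)(r+q).
\]
Your ``leading order'' $r^2$, however, discards $(1+r)q^2$, which on $W$ is of the same order as $r^2$ because $q\ge r/4$. With $q=\kappa r$, the limit as $r\to0$ is $(1-\kappa^2)/(4\kappa^2)$, not $1/(4\kappa^2)$. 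The discrepancy is therefore an $O(1)$ shift of exactly $1/4$, not an $O(r)$ correction. At $\kappa=2/5$ the true leading value is $21/16$, not $25/16$. So the margin over $6/5$ is $9/80$ at $r=0$, and only about $0.047$ at $r=\delta$. Also, $b=2/5$ sits just below the ratio $\sqrt{5/29}\approx0.415$ at which the $r\to0$ value falls to $6/5$. The bound you want is true, but your justification rests on a wrong leading term, so the margin argument does not go through as described.

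The clean repair, which is what the paper does, avoids asymptotics altogether. With $k\doteq q/r\in[a,b]$,
\[
 |\det D\Phi|=J(r,k)\doteq\frac{\bigl(1-(1+r)k\bigr)(1+k)}{\bigl(r+2(1+r)k\bigr)^2}.
\]
This decreases in $r$, since the numerator decreases and the denominator increases. It also decreases in $k$, since $\partial_kJ=-(r+2)^2/\bigl(r+2(1+r)k\bigr)^3$. Hence $|\det D\Phi|\ge J(\delta,b)=208600/167281>6/5$ on $W$.

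The same factorization gives the monotonicity you need for injectivity. We have $\partial_q(s^2)=-2\det D\Phi>0$ for $0<q<r/(1+r)$, and $W$ lies in this branch because $2r/5<r/(1+r)$.

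It also corrects your guess about the two-root formula. The second preimage lies on the decreasing branch $q>r/(1+r)$; near the slow curve it has $q\approx3r$. It is thus of order $r$, not of order $1$ or negative. What separates the two roots is the critical curve $q=r/(1+r)$, not their size.
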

\begin{proof}
Direct differentiation gives
\begin{equation}\label{eq:halbey-measure-jacobian}
  \det D\Phi(r,q)
  =-s\,\partial_qs
  =\frac{((1+r)q-r)(r+q)}{(r^2+2(1+r)q)^2}.
\end{equation}
With $k\doteq q/r\in[a,b]$, its magnitude on $W$ is
\[
  J(r,k)\doteq\frac{(1-(1+r)k)(1+k)}{(r+2(1+r)k)^2}.
\]
The numerator is positive. For fixed $k$, it decreases with $r$, while the
denominator increases. Also,
\[
  \partial_k J(r,k)=-\frac{(r+2)^2}{(r+2(1+r)k)^3}<0.
\]
Therefore
\begin{equation}\label{eq:halbey-measure-expansion}
  |\det D\Phi|\ge J(\delta,b)
  =\frac{208600}{167281}>\frac65.
\end{equation}

If $(R,Q)=\Phi(r,q)$, then $s=1-Q-R>0$ and $r=R/s$.
Once $r$ is recovered, $s^2$ is strictly increasing in $q$ on
$0<q<r/(1+r)$, since $\partial_q(s^2)=-2\det D\Phi>0$ there.
The whole of $W$ lies in that branch, so $q$ is unique. The same argument
gives a $C^1$ diffeomorphism on the open branch; the change-of-variables
formula therefore applies to its measurable subsets. Since
$\Phi(W_{n+1})\subseteq W_n$, \eqref{eq:halbey-measure-expansion} implies
\[
  \frac65\mathcal L^2_{r,q}(W_{n+1})
  \le\int_{W_{n+1}}|\det D\Phi|
  =\mathcal L^2_{r,q}(\Phi(W_{n+1}))
  \le\mathcal L^2_{r,q}(W_n).
\]
Finally $\mathcal L^2_{r,q}(W)=(b-a)\delta^2/2=3\delta^2/40$.
Iteration proves the bound, whose limit proves nullity.
\end{proof}

The corresponding Cartesian area satisfies
\begin{equation}\label{eq:halbey-measure-finite}
  \mathcal L^2\bigl(\chi_+(W_n)\cup\chi_-(W_n)\bigr)
  \le\frac{3\delta^{5/2}}{10}\left(\frac56\right)^n.
\end{equation}
Indeed $\left| \det D \chi_\pm \right|=r/\sqrt{2q-q^2}\le2\sqrt\delta$ on $W$;
apply \cref{lem:halbey-measure-area} to both maps. This bounds the area of
initial points $x_0$ whose reduced states satisfy $(r_t,q_t)\in W$ for every
$t=0,\ldots,n$, including the initial state.

\begin{lemma}[The eventual envelope and its asymptotic error]
\label{lem:halbey-measure-rate}
For a nonterminating trajectory, \eqref{eq:halbey-slow-envelope} holds
eventually if and only if its $(r_t,q_t)$ coordinates eventually remain in
$W$. On any such trajectory,
\[
  \frac{q_t}{r_t}=\frac13-\frac29r_t+o(r_t),\qquad
  s_t=1-\frac43r_t+(2+o(1))r_t^2,
  \qquad t r_t\longrightarrow\frac34.
\]
\end{lemma}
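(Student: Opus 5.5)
The plan is to work in the reduced coordinates $(r,k)$ with $k\doteq q/r$, and reduce everything to the one-step dynamics of $k$. On the wedge $ar\le q\le br$ the formula \eqref{eq:halbey-measure-map} becomes
\[
 s^2=\frac{2k-k^2r}{2k+r(1+2k)}.
\]
Expanding in $r$ uniformly for $k\in[a,b]$ gives
\[
 s=1-\varphi(k)\,r+\chi(k)\,r^2+O(r^3),\qquad \varphi(k)\doteq\frac{(1+k)^2}{4k},
\]
where $\chi$ is an explicit smooth coefficient that I will compute but not display here. Substituting into $q^+=1-(1+r)s$ and $r^+=rs$ gives
\[
 k^+=h(k)+r\,\psi(k)+O(r^2),\qquad h(k)\doteq\varphi(k)-1=\frac{(1-k)^2}{4k},
\]
where $\psi$ is again explicit. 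The map $h$ has the unique fixed point $k_\ast=1/3$ in $[a,b]$, with $h'(1/3)=-2$. Throughout I use \eqref{eq:halbey-measure-radial}, which gives $r_t\le(t+1/r_0)^{-1}\to0$ on every nonterminating trajectory.

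\emph{Envelope $\Rightarrow$ eventually in $W$.} Assume \eqref{eq:halbey-slow-envelope} holds for $t\ge t_0$. The forward computation already carried out in the appendix gives
\[
 q_{t+1}=\tfrac13r_t+\bigl(\tfrac43-c_t\bigr)r_t^2-c_tr_t^3,\qquad r_{t+1}=r_t\bigl(1-\tfrac43r_t+c_tr_t^2\bigr).
\]
Hence $k_{t+1}=\tfrac13+O(r_t)$, with the constant uniform in $c_t\in[1,5/2]$. Since $r_t\to0$, eventually $r_t\le\delta$ and $k_t\in[1/4,2/5]$, so the trajectory stays in $W$.

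\emph{Eventually in $W$ $\Rightarrow$ envelope, with asymptotics.} Suppose $(r_t,k_t)\in W$ for all $t\ge t_0$. I look for a slow curve $\kappa(r)=\tfrac13+\kappa_1r$ that is invariant to second order. Matching $\kappa(r^+)=h(\kappa(r))+r\psi(\kappa(r))+O(r^2)$ with $r^+=r-\tfrac43r^2+\dots$ yields
\[
 \kappa_1\bigl(1-h'(1/3)\bigr)=\psi(1/3),\quad\text{that is,}\quad 3\kappa_1=\psi(1/3),
\]
which I expect to give $\kappa_1=-2/9$. For the deviation $e_t\doteq k_t-\kappa(r_t)$, linearization yields
\[
 e_{t+1}=(-2+O(r_t))\,e_t+O(r_t^2).
\]
Because $k_t$ stays in the bounded interval $[a,b]$, $e_t$ is bounded. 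I then solve the recursion backward, writing $e_t=-\tfrac12e_{t+1}+O(r_t^2)$ and iterating forward in the index:
\[
 |e_t|\le\sum_{j\ge0}2^{-j}\,O\bigl(r_{t+j}^2\bigr)=O(r_t^2),
\]
using $r_{t+j}\le r_t$. In fact this already gives $o(r_t)$, which is all that is needed, and so
\[
 \frac{q_t}{r_t}=\frac13-\frac29r_t+o(r_t).
\]
Substituting this into the second-order expansion of $s$ gives $s_t=1-\tfrac43r_t+c_tr_t^2$. The first-order coefficient is $\varphi(1/3)=4/3$. The second-order coefficient is $c_t=\chi(1/3)-\varphi'(1/3)\kappa_1+o(1)$, and since $\varphi'(1/3)=-2$ this equals $\chi(1/3)+2\kappa_1+o(1)$. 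I expect this to evaluate to $2+o(1)$, which lies strictly inside $[1,5/2]$, so the envelope holds eventually.

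\emph{Radial asymptotics.} Finally, \eqref{eq:app-reciprocal-increment} gives
\[
 \frac1{r_{t+1}}-\frac1{r_t}=\frac{4/3-c_tr_t}{s_t}\to\frac43,
\]
and Cesàro averaging gives $tr_t\to3/4$.

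The main obstacle is the \emph{eventually in $W$ $\Rightarrow$ envelope} direction. The slow curve is repelling, with multiplier $-2$, so forward contraction arguments fail. The key idea is that boundedness of $k_t$ inside $W$, combined with the backward recursion, pins $k_t$ to the slow curve. The remaining labor is careful bookkeeping of the second-order coefficients $\chi(1/3)$ and $\psi(1/3)$ that produce $-2/9$ and $2$.
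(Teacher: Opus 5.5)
Your overall architecture matches the paper's. You pass to $k=q/r$, use the limit map $h(k)=(1-k)^2/(4k)$ with its repelling fixed point $1/3$, use the trapping of $k_t$ in $[a,b]$ to run the recursion backward, read $c_t\to2$ off the second-order expansion of $s$, and finish with Ces\`aro. The forward direction is fine as you argue it.

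\textbf{The gap.} It lies in the step ``linearization yields $e_{t+1}=(-2+O(r_t))e_t+O(r_t^2)$''. What you actually have is
\[
e_{t+1}=G(r_t,k_t)-G(r_t,\kappa(r_t))+O(r_t^2)=\partial_kG(r_t,\xi_t)\,e_t+O(r_t^2),
\]
with $\xi_t$ between $k_t$ and $\kappa(r_t)$. The multiplier is $h'(\xi_t)+O(r_t)=-2+O(r_t)+O(e_t)$, so it is close to $-2$ only if $e_t$ is already small. Membership in $W$ gives you only that $e_t$ is bounded (by about $1/12$). The recursion you iterate therefore presupposes the convergence $k_t\to1/3$ that you are trying to prove.

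Knowing $h'(1/3)=-2$ cannot by itself exclude orbits that stay in $[a,b]$ forever without approaching $1/3$. What excludes them is expansion on the whole interval. That is the missing ingredient, and it is exactly the paper's key estimate:
\[
 h'(k)=\frac{k^2-1}{4k^2}=\frac14-\frac1{4k^2}\le-\frac{21}{16}\qquad\text{for all }k\in\bigl[\tfrac14,\tfrac25\bigr].
\]
Since $\kappa(r_t)\in[a,b]$ for small $r_t$, the mean-value theorem then gives $|e_t|\le(\tfrac{16}{21}+Cr_t)|e_{t+1}|+Cr_t^2$. Your backward summation then goes through with a ratio $\theta<1$ in place of $2^{-j}$, yielding $e_t=O(r_t^2)$.

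\textbf{A smaller point.} Even near the fixed point, inverting your recursion gives $e_t=(-\tfrac12+O(r_t))e_{t+1}+O(r_t^2)$, not $e_t=-\tfrac12e_{t+1}+O(r_t^2)$. The extra $O(r_t)e_{t+1}$ term is only $O(r_t)$ for bounded $e_{t+1}$. The iteration must therefore be carried out in the inequality form above.

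\textbf{The coefficients you left as expectations.} These do come out as you predict. With $\gamma\doteq(1+2k)/(2k)$ one finds $\chi=\gamma\varphi-\varphi^2/2$ and $\psi=\varphi^2-\chi$. Hence $\chi(1/3)=22/9$, $\psi(1/3)=-2/3$, $\kappa_1=-2/9$, and $c_t\to\chi(1/3)+2\kappa_1=2$. This matches the paper's expansions of $G(r,\tfrac13+\beta r)$ and $\widehat s(r,\tfrac13+\beta r)$.

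\textbf{Comparison after repair.} Your route then differs from the paper's only in bookkeeping. The paper first uses the global derivative bound to show that $\beta_t=(k_t-1/3)/r_t$ is bounded. It then runs a $\limsup$ argument on $\beta_{t+1}=-2\beta_t-\tfrac23+O(r_t)$. Your single backward pass around the second-order invariant curve $\kappa(r)$ does both at once. It also gives the slightly sharper remainder $O(r_t^2)$, rather than $o(r_t)$, in $q_t/r_t$.
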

\begin{proof}
Suppose first that \eqref{eq:halbey-slow-envelope} holds. Once
$r_t\le\delta$, \eqref{eq:halbey-measure-map} gives
\[
  \frac{q_{t+1}}{r_{t+1}}
  =\frac{1/3+(4/3-c_t)r_t-c_tr_t^2}{s_t}.
\]
Here $74/75\le s_t\le1$, and interval bounds using $1\le c_t\le5/2$ give
\[
  \frac{3857}{12000}
  \le\frac{q_{t+1}}{r_{t+1}}
  \le\frac{101}{296}.
\]
Both endpoints lie strictly between $a$ and $b$ defined further above. Together with
\eqref{eq:halbey-measure-radial}, this proves eventual membership in $W$.

Conversely, suppose the trajectory eventually remains in $W$. Discard the
finite initial segment before this permanent stay and number the remaining
states starting at $t=0$. Set $k_t\doteq q_t/r_t$. The exact recurrence implies
\begin{equation}\label{eq:halbey-measure-increment}
  \frac1{r_{t+1}}-\frac1{r_t}=1+k_{t+1},\qquad
  k_{t+1}=G(r_t,k_t),
\end{equation}
where the following expression extends smoothly to $r=0$ for $k\in[a,b]$:
\[
  G(r,k)\doteq\frac{(1+k)^2}{\sqrt{A}(\sqrt A+\sqrt B)}-1,
  \qquad A\doteq k(2-kr),\quad B\doteq2k+(1+2k)r.
\]
Write $\widehat s(r,k)\doteq\sqrt{A/B}$ for the contraction in these coordinates,
so $s_t=\widehat s(r_t,k_t)$. This distinguishes it from the original
function $s(r,\theta)$ in \eqref{eq:halbey-polar-map}.
At $r=0$ the map $G$ is $g(k)\doteq(k-1)^2/(4k)$, so uniformly on this interval
$G(r,k)=g(k)+O(r)$. The fixed point $k_*\doteq1/3$ satisfies
\[
  g(k_*)=k_*,\qquad g'(k)\le-\frac{21}{16}\quad(a\le k\le b).
\]
Set $\lambda\doteq21/16$ and apply the mean-value theorem to $g$ between
$k_t$ and $k_*=1/3$. Since $g(k_*)=k_*$ and $|g'(k)|\ge\lambda$ on $[a,b]$,
\[
  \lambda|k_t-k_*|\le|g(k_t)-g(k_*)|
  \le|k_{t+1}-k_*|+C r_t.
\]
The second inequality uses $k_{t+1}=G(r_t,k_t)$ and the uniform estimate
$|G(r,k)-g(k)|\le Cr$.
Iterate this inequality forward. Since $k_t$ is bounded and
$r_{t+j}\le r_t$, letting the number of iterations tend to infinity yields
$|k_t-k_*|\le C r_t/(\lambda-1)$. Thus
$\beta_t\doteq(k_t-1/3)/r_t$ is bounded.

Uniform Taylor expansion for bounded $\beta$ gives
\[
  G(r,1/3+\beta r)
  =\frac13+\left(-2\beta-\frac23\right)r+O(r^2),
\]
\[
  \widehat s(r,1/3+\beta r)
  =1-\frac43r+\left(2\beta+\frac{22}{9}\right)r^2+O(r^3).
\]
As $r_{t+1}/r_t=1+O(r_t)$, the first expansion implies
\[
  \beta_{t+1}=-2\beta_t-\frac23+O(r_t).
\]
Let $L\doteq\limsup_t|\beta_t+2/9|<\infty$. Since $r_t\to0$, this identity gives
$2L\le L$, whence $L=0$. Thus $\beta_t\to-2/9$, and the second expansion
gives $c_t\to2$, proving eventual \eqref{eq:halbey-slow-envelope}.
Finally averaging \eqref{eq:halbey-measure-increment} gives
$(1/r_t)/t\to1+1/3=4/3$.
\end{proof}

\begin{lemma}[Existence at every small positive radius]
\label{lem:halbey-measure-existence}
For every $r_0\in(0,\delta]$ there is a $q_0\in[ar_0,br_0]$ whose
trajectory remains in $W$ for all iterations.
\end{lemma}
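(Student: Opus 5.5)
We use a nested-interval (compactness) argument on the vertical segment $I_0\doteq\{r_0\}\times[ar_0,br_0]$. Define the closed sets
\[
  K_n\doteq\{q\in[ar_0,br_0]:\Phi^j(r_0,q)\in W\ \text{for }0\le j\le n\}.
\]
These are nested, and closedness follows from continuity of $\Phi$ on $\mathcal D\supset W$ together with closedness of $W$ relative to $r>0$; note $r$ stays positive because $r_{t+1}=r_ts_t$ with $s_t>0$. If every $K_n$ is nonempty, Cantor's intersection theorem gives $q_0\in\bigcap_nK_n$, which is the claim. The work is therefore to show $K_n\neq\emptyset$ for every $n$.

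To do this, track the image curves $\gamma_n\doteq\Phi^n(K_n)$ and show by induction the stronger statement that some connected arc of $K_n$ is mapped by $\Phi^n$ onto a curve crossing the wedge $W$ \emph{from side to side}. Concretely, the arc should join a point on the lower edge $\{q=ar\}$ to a point on the upper edge $\{q=br\}$ while staying in $W$. The base case $n=0$ is the segment $I_0$ itself. For the induction step, take such a crossing arc and apply $\Phi$. Write $k=q/r$ and use the recurrence $k^+=G(r,k)$ from the proof of \cref{lem:halbey-measure-rate}. At $r=0$ we have $G=g$ with $g(k)=(k-1)^2/(4k)$ and $g'\le-21/16$ on $[a,b]$. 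Hence $g(a)=9/16>b$ and $g(b)=9/80<a$. By uniform continuity, $G(r,a)>b$ and $G(r,b)<a$ for all $0<r\le\delta$, which can be checked numerically with $\delta=1/100$. Thus the image arc has its endpoints on opposite sides of the wedge in the $k$-coordinate: one above $k=b$, one below $k=a$. Its radii stay in $(0,\delta]$ because $r^+=rs\le r/(1+r)$ by \eqref{eq:halbey-measure-radial}. By the intermediate value theorem applied to $k^+$ along the arc, a connected subarc has image joining $\{k=a\}$ to $\{k=b\}$ inside $\{a\le k\le b,\ 0<r\le\delta\}=W$. Pulling this subarc back gives a connected subarc of $K_{n+1}$ with the same crossing property. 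This closes the induction and gives $K_n\neq\emptyset$.

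The main obstacle is verifying the endpoint inequalities $G(r,a)>b$ and $G(r,b)<a$ uniformly for $r\in(0,\delta]$, not only at $r=0$. These follow from the explicit formula for $G$ with $A=k(2-kr)$ and $B=2k+(1+2k)r$. At $k=a=1/4$ one gets $G\approx 9/16-O(r)$, and at $k=b=2/5$ one gets $G\approx 9/80+O(r)$. The margins of about $0.16$ and $0.14$ dwarf the $O(\delta)$ corrections, so a crude interval bound on $\sqrt A$ and $\sqrt B$ over $r\le1/100$ should suffice.

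A secondary point is making sure the arc stays in the domain where $\Phi$ is continuous. This holds since $W\subset\mathcal D$ was already established and $s>0$ there.
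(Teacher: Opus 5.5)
Your argument is the paper's proof. You fix $r_0$ and keep a closed parameter interval on which $k_n$ equals $a$ and $b$ at the two endpoints. You use $G(r,a)>b$ and $G(r,b)<a$ with the intermediate value theorem (first/last crossing) to shrink it so that $k_{n+1}\in[a,b]$ as well. You control the radius by $s\le 1/(1+r)$ and conclude by compactness of the nested closed sets.

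The step you leave open is the crossing inequalities, and your numbers there are off. In fact $g(2/5)=(3/5)^2/(8/5)=9/40$, not $9/80$, so the margin at $k=b$ is $a-g(b)=1/40$ rather than about $0.14$. Uniform continuity alone also says nothing about the specific range $0<r\le 1/100$, so a quantitative estimate is genuinely needed.

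The paper closes this without interval arithmetic. Since $G(r,k)=\bigl(\sqrt{B/A}-1\bigr)/r-1$ and $A>0$, the inequality $G(r,k)>c$ (resp.\ $<c$) is equivalent to $B>A\bigl(1+(1+c)r\bigr)^2$ (resp.\ $<$). At $(k,c)=(a,b)$ and $(k,c)=(b,a)$, dividing by $r$ turns these into $(65-322r+49r^2)/400>0$ and $(4+85r-25r^2)/100>0$. Both are evident on $(0,1/100]$. With that check inserted, your proof is complete.
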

\begin{proof}
The angular map crosses both ends of the interval:
\begin{equation}\label{eq:halbey-measure-crossing}
  G(r,a)>b,\qquad G(r,b)<a\qquad(0<r\le\delta).
\end{equation}
For example $G(r,a)>b$ is equivalent to
$B-A(1+(1+b)r)^2>0$ at $k=a$. After dividing by $r$, that difference is
$(65-322r+49r^2)/400>0$ on the specified interval. Similarly
$G(r,b)<a$ follows from
\[
  \frac{A(1+(1+a)r)^2-B}{r}\bigg|_{k=b}
  =\frac{4+85r-25r^2}{100}>0.
\]
All factors squared in these comparisons are positive.

Start with the closed interval $J_0\doteq[a,b]$ of possible $k_0$ at the fixed
$r_0$. Inductively choose a nonempty closed subinterval $J_n$ such that all
states through time $n$ lie in $W$, and $k_n$ takes the values $a,b$ at its
two endpoints, in either order. The update is continuous on $J_n$.
By \eqref{eq:halbey-measure-crossing}, $k_{n+1}$ takes values on opposite
sides of $[a,b]$ at those endpoints. The intermediate-value theorem gives a
closed subinterval $J_{n+1}\subseteq J_n$ on which $k_{n+1}\in[a,b]$, with
endpoint values $a,b$. One may take a first crossing of one endpoint value
and the last crossing of the other value before it. All previous constraints
remain satisfied, and \eqref{eq:halbey-measure-radial} preserves the radial
cutoff. Compactness gives a point in $\bigcap_nJ_n$.
\end{proof}

\begin{proof}[Proof of \cref{thm:halbey-measure-zero}]
On $\mathcal D$, the Jacobian \eqref{eq:halbey-measure-jacobian} vanishes
only on the curve $q=r/(1+r)$, a set of area zero. Consequently the preimage
of every area-zero set is area zero: away from this curve the inverse-function
theorem provides local $C^1$ inverse maps. Countably many such neighborhoods
suffice, and their locally Lipschitz inverse maps preserve null sets. Add
the null critical curve to cover the remaining points.

By \cref{lem:halbey-measure-area}, every finite preimage of
$W_\infty$ in $\mathcal D$, and their countable union, has zero $(r,q)$ area.
\Cref{lem:halbey-measure-rate} identifies this union with the eventually
slow trajectories starting off the horizontal axis and the disk boundary.
A trajectory hitting the critical curve reaches the horizontal axis on the
next step, and then the optimizer, so it is not an omitted eventually slow
trajectory. The Cartesian charts \eqref{eq:halbey-measure-chart} are locally
$C^1$ on $\mathcal D$ and preserve null sets. The excluded initial axis and
disk boundary are already Cartesian null sets. Thus
$\mathcal L^2(\mathcal E)=0$.

\Cref{lem:halbey-measure-existence} supplies a point of $\mathcal E$ at
every initial radius $r_0\in(0,\delta]$, by
\cref{lem:halbey-measure-rate}. Distinct radii give distinct
initializations, proving uncountability. The rate lemma also gives
$t^2f(x_t)=(t r_t)^2\to9/16$. Absolute continuity proves the probability
claim.
\end{proof}

\section{Example of compatible assignments and swaps}
\label{app:assignment-example}

\begin{example}
\label{ex:assignments}
Take $d\doteq5$ and the generating permutation $(2,4,1,3,5)$.
Use nonnegative query directions with squared components
$(5/2,9/2,0,0,0)$ and then $(3,0,0,2,0)$. Both observed thresholds are $1$.
The first answer reveals $\Pi(1)=2,\Pi(2)=4$ simultaneously; the second
adds $\Pi(4)\ge2$ while its only active coordinate is already known.
The remaining ranks are $(r_1,r_2,r_3)=(1,3,5)$, and sorting the compressed
bounds gives $\iota=(3,5,4)$ and $\ell=(1,1,2)$.
There are four compatible reduced permutations:
$(1,2,3),(1,3,2),(2,1,3),(3,1,2)$.
\begin{figure}[t]
 \centering
 \includegraphics[width=.82\linewidth]{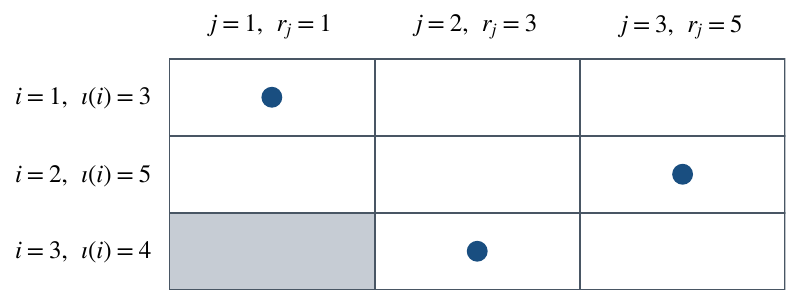}
 \caption{The example's allowed assignments. Columns show both reduced
 indices $j$ and original ranks $r_j$; rows show both $i$ and $\iota(i)$.
 The shaded cell is excluded by the second answer. Dots mark the generating
 assignment $\sigma=(1,3,2)$ (in the reduced coordinates), not the only compatible assignment.}
 \label{fig:assignment-board}
\end{figure}
\end{example}

\subsection{Swap graph and one-query update}
\label{app:swap-example}

We continue \cref{ex:assignments}, with remaining original ranks
$(1,3,5)$, physical row order $\iota=(3,5,4)$, and bounds $\ell=(1,1,2)$.
The four reduced permutations are the vertices of
\cref{fig:assignment-graph}. From $(1,2,3)$, exchanging the assignments in
rows $1,2$ swaps original ranks $1,3$ between physical coordinates $3,5$,
giving $(2,1,3)$. The swap in rows $2,3$ gives $(1,3,2)$.
The swap in rows $1,3$ would give $(3,2,1)$, violating $\ell_3=2$.
Every vertex has $\Delta=(1+2+3)-(1+1+2)=2$ outgoing edges,
hence eight directed edges in total.

Now query $p\doteq e_3$, the unit vector at physical coordinate $3$,
which is reduced row $1$. Write $k\doteq\Pi(3)=r_{\sigma(1)}\in\{1,3,5\}$.
Only this coordinate can be active, so the threshold equation is
$(\tau-k)_+=1$. Since $C=1/\sqrt{32}$, the threshold and LMO answer are
\[
 \tau_\sigma=k+1,\qquad
 z(k)\doteq-\frac{\sqrt{k+1}-\sqrt{k}}4\,e_3.
\]
These three numerical answers are distinct. Hence two vertices give the
same answer exactly when they have the same first reduced column $\sigma(1)$.

After the answer, remove coordinate $3$ and its disclosed rank $k$.
The other query components are zero, so the old bounds $b_4=2$ and $b_5=1$
are unchanged. Recompute the lower bounds on list indices and sort the rows
as before. In the table, the answer classes use the old reduced indices;
the last four columns describe the updated state.
\[
\begin{array}{c|c|c|c|c|c}
 k & \text{answer class} &
 \substack{\text{remaining}\\\text{original ranks}} &
 \substack{\text{physical}\\\text{row order}} & \ell^+ & \Delta^+\\
 \hline
 1 & \{(1,2,3),(1,3,2)\} & (3,5) & (4,5) & (1,1) & 1\\
 3 & \{(2,1,3)\} & (1,5) & (5,4) & (1,2) & 0\\
 5 & \{(3,1,2)\} & (1,3) & (5,4) & (1,2) & 0
\end{array}
\]
For $k=1$, both bounds select the first remaining rank $3$, so both new
index bounds are $1$. The physical-coordinate tie rule puts coordinate $4$
before $5$: the new row order is not obtained merely by deleting the first
entry of $(3,5,4)$. The updated family is
$\mathcal F((1,1))=\{(1,2),(2,1)\}$.
For $k=3$ or $5$, coordinate $4$ must receive the larger remaining rank.
Thus $\mathcal F((1,2))=\{(1,2)\}$: two coordinates remain inactive,
but their ranks are determined by the constraints.

Under the conditional uniform law on the four vertices, the outcomes
$k=1,3,5$ have probabilities $1/2,1/4,1/4$. Consequently
\[
 \E\Delta^+=\tfrac12\cdot1+\tfrac14\cdot0+\tfrac14\cdot0=\tfrac12,
 \qquad
 \E[\Delta-\Delta^+]=2-\tfrac12=\tfrac32.
\]
Only the two directed edges between $(1,2,3)$ and $(1,3,2)$ stay within
an answer class. The other six cross classes; averaging over the four
vertices gives the same decrease $6/4=3/2$.
The generating permutation has $k=1$, so its realized decrease is $1$,
rather than the conditional average $3/2$.
\begin{figure}[t]
 \centering
 \includegraphics[width=.72\linewidth]{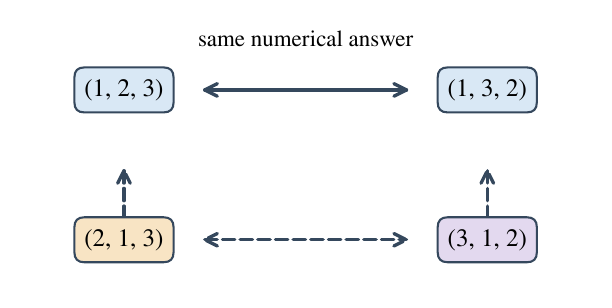}
 \caption{Vertices are the four compatible reduced permutations.
 Each double-headed connection represents two directed swap edges.
 Equal colors indicate equal numerical answers to $p=e_3$.
 Dashed connections cross answer classes; only the top pair remains together.}
 \label{fig:assignment-graph}
\end{figure}
 \end{document}